\newtheorem{theorem}{Theorem}
\newtheorem{proposition}[theorem]{Proposition}
\newtheorem{lemma}[theorem]{Lemma}
\theoremstyle{definition}
\newtheorem{definition}{Definition}
\newtheorem{remark}[definition]{Remark}
\numberwithin{theorem}{section}
\numberwithin{definition}{section}
\numberwithin{equation}{section}
\def\dist{\operatorname{dist}\,}
\def\C{\mathbb{C}}
\def\D{\mathcal{D}}
\def\R{\mathbb{R}}
\def\Rn{{\mathbb{R}^n}}
\def\N{\mathbb{N}}
\def\Q{{\mathcal{Q}}}
\def\Z{\mathbb{Z}}
\def\G{\mathscr{G}}
\def\ch{\operatorname{ch}}
\def\supp{\operatorname{supp}}
\def\dist{\operatorname{dist}}
\begin{document}

\author{Mingming Cao}
\address{
Instituto de Ciencias Matem\'aticas CSIC-UAM-UC3M-UCM\\
Con\-se\-jo Superior de Investigaciones Cient{\'\i}ficas\\
C/ Nicol\'as Cabrera, 13-15\\
E-28049 Ma\-drid, Spain} 
\email{mingming.cao@icmat.es}

\author{Qingying Xue}
\address{
         School of Mathematical Sciences \\
         Beijing Normal University \\
         Laboratory of Mathematics and Complex Systems \\
         Ministry of Education \\
         Beijing 100875 \\
         People's Republic of China}
\email{qyxue@bnu.edu.cn}

\thanks{The first author acknowledges financial support from the Spanish Ministry of Science and Innovation, through the ``Severo Ochoa Programme for Centres of Excellence in R\&D'' (SEV-2015-0554) and from the Spanish National Research Council, through the ``Ayuda extraordinaria a Centros de Excelencia Severo Ochoa'' (20205CEX001). The second author was supported partly by NSFC {(Nos. 11671039, 11871101) and NSFC-DFG (No. 11761131002).\\
Corresponding author: Qingying Xue. Email: qyxue@bnu.edu.cn}}

\subjclass[2010]{42B20, 42B25} 
\keywords{Multilinear; Dyadic analysis; Non-homogeneous spaces; Littlewood-Paley-Stein operators.}

\date{July 26, 2020.}
\title[Multilinear Littlewood-Paley-Stein operators]{Multilinear Littlewood-Paley-Stein Operators on Non-homogeneous Spaces}
\maketitle

%%%%%%%%%%%%%%%%%%%%%%%%%%%%%%%%%%%%%%%%%%%%%%%%%%%%%%%%%%%%%%%%%%%%%%%
\begin{abstract}Let $\kappa \ge 2, \lambda > 1$ and define the multilinear Littlewood-Paley-Stein operators by
\begin{align*}
g_{\lambda,\mu}^*(\vec{f})(x)
= \bigg(\iint_{\R^{n+1}_{+}} \vartheta_t(x, y)
\bigg|\int_{\R^{n \kappa}} s_t(y,\vec{z}) \prod_{i=1}^{\kappa} f_i(z_i) \ d\mu(z_i)\bigg|^2 
\frac{d\mu(y) dt}{t^{m+1}}\bigg)^{\frac12}, 
\end{align*} 
where $\vartheta_t(x, y)=\big(\frac{t}{t + |x - y|}\big)^{m \lambda}$. 
In this paper, our main aim is to investigate the boundedness of $g_{\lambda,\mu}^*$ on non-homogeneous spaces. By means of probabilistic and dyadic techniques, together with non-homogeneous analysis, we show that $g_{\lambda,\mu}^*$ is bounded from $L^{p_1}(\mu) \times \cdots \times L^{p_{\kappa}}(\mu)$ to $L^p(\mu)$ under certain weak type assumptions. The multilinear non-convolution type kernels $s_t$ only need to satisfy some weaker conditions than the standard conditions of multilinear Calder\'{o}n-Zygmund type kernels and the measures $\mu$ are only assumed to be upper doubling measures (non-doubling). The above results are new even under Lebesgue measures. This was done by considering first a sufficient condition for the strong type boundedness of $g_{\lambda,\mu}^*$ based on an endpoint assumption, and then directly deduce the strong
bound on a big piece from the weak type assumptions.
\end{abstract}

%%%%%%%%%%%%%%%%%%%%%%%%%%%%%%%%%%%%%%%%%%%%%%%%%%%%%%%%%%%%%%%%%%%%%%%

%%%%%%%%%%%%%%%%%%%%%%%%%%%%%%%%%%%%%%%%%%%%%%%%%%%%%%%%%%%%%%%%%%%%%%%
\section{Introduction}
Littlewood-Paley type operators, including $g$-function, Lusin area integral $S$, $g_{\lambda}^{*}$-function and Marcinkiewicz integral $\mu$, have been the subjects of many recent
research works in Harmonic analysis, function spaces and PDE. The first theorem about Littlewood-Paley operators was given by Littlewood and Paley \cite{LP-1} for $g$-function in their study of the Fourier series. Subsequently, they \cite{LP-2} introduced $g_{\lambda}^{*}$-function and established its $L^p$ bondedness. The above results were
extended to Lusin area integral $S$ and Marcinkiewicz integral by Marcinkiewicz and Zygmund \cite{MZ,Z}. It is worth pointing out that the results obtained for $\mu$ depended heavily on complex
function theory, and thus they were severely limited to the case of one dimension.
\medskip

It was Stein \cite{S-58, S-61} who extended the definitions and the results of the above operators to higher dimensions. The weak type estimates and $L^p$ boundedness of these
operators were obtained by the method of real variables, which opened the door to investigate Littlewood-Paley type operators in a wide variety of spaces, such as Hardy spaces
\cite{FS}, $BMO$ spaces \cite{K}, weighted spaces \cite{GW,MR}, Sobolov spaces \cite{SW}, and Campanato spaces \cite{SY}. Furthermore, the operators studied by Stein and their variations
could be used as basic tools in the study of PDE, see for example
\cite{CDM, CMM, DJK, DJ}. Moreover, many other publications about the improvements and variants of Littlewood-Paley-Stein operators came to enrich the literature on Littlewood-Paley
theory (cf. e.g. \cite{K-1, L, L-1, L-2, L-3, RS}).
To state more conveniently, recall that the classical $g_{\lambda}^*$-function of higher dimension $(n \geq 2)$ defined by Stein are as follows:
$$
g_{\lambda}^*(f)(x)
=\bigg(\iint_{\R^{n+1}_{+}} \Big(\frac{t}{t+|x-y|}\Big)^{n\lambda}
|\nabla P_t * f(y)|^2 \frac{dy dt}{t^{n-1}}\bigg)^{1/2},\quad\quad \lambda > 1
$$
where $P_t(y)=t^{-n} P(t^{-1} y)$, $P$ is the Poisson kernel and
$\nabla =(\frac{\partial}{\partial y_1},\ldots,\frac{\partial}{\partial y_n},\frac{\partial}{\partial t})$.
It was shown by Stein that $g_{\lambda}^*$ is of weak type $(1,1)$ for $\lambda > 2$, and is of strong type $(p,p)$ for $1 < p < \infty$. Stein also pointed out that the weak type $(1,1)$
estimate doesn't hold for $1<\lambda\le 2$. Later on, as a replacement of weak $(1,1)$ bounds for $1<\lambda<2$, Fefferman \cite{F} established the endpoint weak $(p,p)$ estimate of
$g_{\lambda}^*$-function when $p>1$ and $\lambda=2/p$. Obviously, for any $x \in \Rn$, the smaller $\lambda$ the greater $g_{\lambda}^*(f)(x)$. This basic fact implies that the properties of
$g_{\lambda}^*$ depend critically on the appropriate relationship between $p$ and $\lambda$. This makes the study of $g_{\lambda}^*$ pretty much interesting, but also, makes its study
more difficult than $g$-function and Lusin area integral $S$.
\medskip

The purpose of this paper is to study the multilinear Littlewood-Paley-Stein $g_{\lambda}^*$-function and Lusin area integral $S$ on non-homogeneous spaces.
We only focus on discussing $g_{\lambda}^*$-function by the reason that Lusin area integral $S$ is pointwisely controlled by $g_{\lambda}^*$-function. The methods we will use are
beyond doubling measures and classical methods, which are replaced by probabilistic methods, multilinear dyadic martingale and non-homogeneous analysis. We will describe these
components in a more detail way later.

Our object of investigation is the multilinear Littlewood-Paley-Stein $g_{\lambda}^*$-function as follows:
\begin{align*}
g_{\lambda,\mu}^*(\vec{f})(x)
= \bigg(\iint_{\R^{n+1}_{+}} \Big(\frac{t}{t + |x - y|}\Big)^{m \lambda} |\Theta_t^\mu (\vec{f})(y)|^2
\frac{d\mu(y) dt}{t^{m+1}}\bigg)^{1/2},\ \ \lambda > 1,
\end{align*}
where $\mu$ is a non doubling measure and the multilinear form $\Theta_t^\mu$ is defined by
$$
\Theta_t^\mu (\vec{f})(y)
= \int_{(\Rn)^{\kappa}} s_t(y,\vec{z}) \prod_{i=1}^{\kappa} f_i(z_i) \ d\mu(z_1) \cdots d\mu(z_{\kappa}).
$$

Obviously, the classical $g_{\lambda}^*$ function was extended to the multilinear setting. Moreover, it also extends Shi, Xue and Yabuta's \cite{SXY} multilinear operator with convolution type kernels and Lebesgue
measure to non-convolution type kernels and non-doubling measures. It was well-known that the theory of multilinear Littlewood-Paley-Stein operators originated in the works of Coifman and Meyer
\cite{CM}. Soon after, the importance of this kind of multilinear operators was shown in PDE and other fields. In 1982, Fabes, Jerison and Kenig \cite{FJK-1} first obtained some multilinear
Littlewood-Paley-Stein type estimates for the square root of an elliptic operator in divergence form, and then studied the solutions of Cauchy problem for non-divergence form parabolic equations. In
1984, based on a multilinear Littlewood-Paley-Stein estimate, the authors \cite{FJK-2} gave necessary and sufficient conditions for absolute continuity of elliptic-harmonic measure.
Additionally, in 1985, Fabes et al. \cite{FJK-3} investigated a class of multilinear square functions and applied it to Kato's problem. Beyond all these, in terms of the weighted
theory, the latest developments about the multilinear Littlewood-Paley-Stein operators can be found in
\cite{BH, CXY, HXMY, XY}.
Recently, probabilistic methods and dyadic analysis has attracted renewed attention because of the celebrated $A_2$ conjecture \cite{Ht}.  
All his work were based on an improvement of random dyadic grids and probabilistic methods, which were introduced by Nazarov, Treil and Volberg [34] in the study of Calder\'{o}n-Zygmund operators on non-homogeneous spaces. Hyt\" {o}nen's work has inaugurated a new research direction in probability theory and Harmonic analysis. Later on, there is a large literature adopting the ideology of Hyt\" {o}nen both one-parameter and multi-parameter cases, homogeneous and non-homogeneous setting. 
These powerful tools have not widely used in multilinear Harmonic analysis. In this regard the current paper is a continuation of the
recent development in the probabilistic methods. Additionally, it extends the results in \cite{CX-3} to the multilinear setting.
\medskip

This paper is organized as follows. In Section \ref{Main results}, we formulate the main results of this paper. Some standard and general estimates will be given in Section \ref{Sec-estimates}, which will contribute to the endpoint bound of $g_{\lambda}^*$-function and to establish the good lambda type inequality. Then, in Section \ref{Sec-main}, we will complete the proof of multilinear local $T1$ theorem. Section \ref{Sec-lambda} will be devoted to demonstrate the strong type bound $L^{p_1}(\mu) \times \cdots \times L^{p_{\kappa}}(\mu) \rightarrow L^{p}(\mu)$ of $g_{\lambda}^*$-function based on an endpoint priori assumption. Finally, in Section \ref{Sec-big}, we demonstrate a big piece multilinear local $T1$ theorem. One should also noted that our more general non-convolution
type kernel conditions are weaker than the convolution-type conditions in \cite{SXY} and the measures now we will work with are non-doubling measures.

%%%%%%%%%%%%%%%%%%%%%%%%%%%%%%%%%%%%%%%%%%%%%%%%%%%%%%%%%
\vspace{0.3cm}

%%%%%%%%%%%%%%%%%%%%%%%%%%%%%%%%%%%%%%%%%%%%%%%%%%%%%%%%%%%%%%%%%%%%%%%
%%%%%%%%%%%%%%%%%%%%%%%%%%%%%%%%%%%%%%%%%%%%%%%%%%%%%%%%%%%%%%%%%%%%%%%
\section{Main results}\label{Main results}
Let $\mathfrak{M}(\Rn)$ be the space of all complex Borel measures in $\Rn$ equipped with the norm of total variation $||\nu|| = |\nu|(\Rn)$.
Recall that, a Borel measure $\mu$ on $\Rn$ is called $a \ power \ bounded$ $measure$, if for some $m>0$, it satisfies
$$
\mu(B(x,r)) \lesssim r^m, \ \ x \in \Rn, \ r>0.
$$
%%%%%%%%%%%%%%%%%%%%%%%%%%%%%%%%%%%%%%%%%%%%%%%%%%%%%%%%%%%%%%%%%%%
\begin{definition}\label{Def}
Let $\kappa$ be a positive integer and $\mu$ be a power bounded measure. Given a vector of complex measures $\vec{\nu}=(\nu_1,\ldots,\nu_{\kappa})$, we define the multilinear
Littlewood-Paley-Stein $g_\lambda^*$-function as follows
\begin{align*}
g_{\lambda}^*(\vec{\nu})(x)
= \bigg(\iint_{\R^{n+1}_{+}} \Big(\frac{t}{t + |x - y|}\Big)^{m \lambda} |\Theta_t (\vec{\nu})(y)|^2
\frac{d\mu(y) dt}{t^{m+1}}\bigg)^{1/2},\ x \in \Rn,\ \lambda > 1,
\end{align*}
where
$$
\Theta_t (\vec{\nu})(y) = \int_{(\Rn)^{\kappa}} s_t(y,\vec{z})\ d\nu_1(z_1) \cdots d\nu_{\kappa}(z_{\kappa}).
$$
The kernel $s_t : (\Rn)^{\kappa+1} \rightarrow \C$ is assumed to satisfy the following estimates: for some
$\alpha>0$
\begin{enumerate}
\item [(1)] Size condition :
$$
|s_t(x,\vec{y})| \lesssim \frac{t^{\kappa \alpha}}{\prod_{i=1}^{\kappa}(t + |x - y_i|)^{m+\alpha}}.
$$
\item [(2)] H\"{o}lder conditions :
$$
|s_t(x,\vec{y}) - s_t(x',\vec{y})| \lesssim \frac{t^{(\kappa-1)\alpha} |x - x'|^{\alpha}}{\prod_{i=1}^{\kappa}(t + |x - y_i|)^{m+\alpha}},
$$
whenever $|x - x'| < t/2$ and
$$
|s_t(x,\vec{y}) - s_t(x,y_1,\ldots,y_i',\ldots,y_{\kappa})| \lesssim \frac{t^{(\kappa-1)\alpha} |y_i - y_i'|^{\alpha}}{\prod_{i=1}^{\kappa}(t + |x - y_i|)^{m+\alpha}},
$$
whenever $|y_i - y_i'| < t/2$ for all $1 \leq i \leq \kappa$.
\end{enumerate}
\end{definition}
%%%%%%%%%%%%%%%%%%%%%%%%%%%%%%%%%%%%%%%%%%%%%%%%%%%%%%%%%%%%%%%%%%%%%%%
In particular, we denote
\begin{align*}
g_{\lambda,\mu}^*(\vec{f})(x)
= \bigg(\iint_{\R^{n+1}_{+}} \Big(\frac{t}{t + |x - y|}\Big)^{m \lambda} |\Theta_t^\mu (\vec{f})(y)|^2
\frac{d\mu(y) dt}{t^{m+1}}\bigg)^{1/2},\ x \in \Rn,\ \lambda > 1,
\end{align*}
where
$$
\Theta_t^\mu (\vec{f})(y) 
= \int_{(\Rn)^{\kappa}} s_t(y,\vec{z}) \prod_{i=1}^{\kappa} f_i(z_i) \ d\mu(z_1) \cdots d\mu(z_{\kappa}).
$$

We also need the local version of $g_{\lambda}^*$ and $g_{\lambda,\mu}^*$.
For a given cube $Q$, the local $g_{\lambda}^*$-function is defined by
\begin{align*}
g_{\lambda,Q}^*(\vec{\nu})(x)
= \bigg(\int_{0}^{\ell(Q)}\int_{\Rn} \Big(\frac{t}{t + |x - y|}\Big)^{m \lambda} |\Theta_t (\vec{\nu})(y)|^2
\frac{d\mu(y) dt}{t^{m+1}}\bigg)^{1/2},\ \lambda > 1.
\end{align*}
Similarly, the local $g_{\lambda,\mu}^*$-function is defined in the way that $g_{\lambda,\mu,Q}^*(\vec{f})=g_{\lambda,Q}^*(f_1 \mu,\ldots,f_{\kappa} \mu)$.

Now, we give the definition of $(a,b)$-doubling measure condition and the $\mathfrak{C}$-small boundary condition.
%%%%%%%%%%%%%%%%%%%%%%%%%%%%%%%%%%%%%%%%%%%%%%%%%%%%%%%%%%%%%%
\begin{definition}
\begin{enumerate}
\item [(1)] Given $a,b>1$, a cube $Q \subset \Rn$ is called $(a,b)$-doubling for a given measure $\mu$ if $\mu(a Q) \leq b \mu(Q)$.
\item [(2)] Given $\mathfrak{C}>0$ we say that a cube $Q \subset \Rn$ has $\mathfrak{C}$-small boundary with respect to the measure $\mu$ if
$$
\mu \big(\{x \in 2Q; \dist(x,\partial Q) \leq \xi \ell(Q) \}\big) \leq \mathfrak{C} \xi \mu(2Q)
$$
for every $\xi > 0$.
\end{enumerate}
\end{definition}
The main result of this paper is the following.  
%%%%%%%%%%%%%%%%%%%%%%%%%%%%%%%%%%%%%%%%%%%%%%%%%%%%%%%%%%%%%%%%%%%%
\begin{theorem}\label{Local T1}
Let $\lambda > 2 \kappa$, $0 < \alpha \leq m(\lambda-2\kappa)$ and $1 < p_1, \cdots,p_{\kappa}<\infty$ with $\frac{1}{p}=\frac{1}{p_1} + \cdots + \frac{1}{p_\kappa}$. Assume that
$\mu$ is a power bounded measure, $p_0>0$, $\delta_0 < 1$ and $C_0 < \infty$ are given constants. Let
$\beta > 0$ and $\mathfrak{C}$ be large enough depending only on $n$. Suppose that for every $(2, \beta)$-doubling cube $Q \subset \Rn$ with $\mathfrak{C}$-small boundary, there
exists
$H_Q \subset \Rn$ such that $\mu(H_Q) \leq \delta_0 \mu(Q)$ and
$$
\sup_{\zeta >0} \zeta^{p_0} \mu \big(\{x \in Q \setminus H_Q; g_{\lambda,\mu,Q}^*(\mathbf{1}_Q,\cdots,\mathbf{1}_Q) > \zeta\}\big) \leq C_0 \mu(Q).
$$
Then we have
$$
\big\| g_{\lambda,\mu}^*(\vec{f}) \big\|_{L^p(\mu)}
\lesssim \prod_{i=1}^{\kappa} \big\|f_i\big\|_{L^{p_i}(\mu)}.
$$
\end{theorem}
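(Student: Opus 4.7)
The plan is to combine two modular components that correspond exactly to the structure of Sections \ref{Sec-main}–\ref{Sec-big}: first promote the given \emph{weak-type} endpoint assumption on $g_{\lambda,\mu,Q}^*(\mathbf{1}_Q,\ldots,\mathbf{1}_Q)$ to a \emph{strong-type} bound on a big piece $Q \setminus H_Q$ of each good cube $Q$, and then stitch these local strong bounds into a global $L^{p_1}(\mu)\times\cdots\times L^{p_\kappa}(\mu) \to L^p(\mu)$ estimate via a probabilistic covering/good-$\lambda$ argument. Throughout, the constraint $\lambda > 2\kappa$ together with $0 < \alpha \le m(\lambda-2\kappa)$ is what renders the tails of the defining integral absolutely integrable after decomposing into annuli around $Q$.

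For the first component, I would fix a $(2,\beta)$-doubling cube $Q$ with $\mathfrak{C}$-small boundary and decompose each input $f_i = f_i \mathbf{1}_{2Q} + f_i \mathbf{1}_{(2Q)^c}$. Expanding the $\kappa$-fold product yields $2^\kappa$ terms. The purely local term, where every $f_i$ is replaced by $f_i \mathbf{1}_{2Q}$, is estimated by interpolating the endpoint weak bound (via Kolmogorov's inequality on $Q \setminus H_Q$, using $\mu(H_Q) \le \delta_0 \mu(Q)$ and the power-bound $\mu(2Q)\lesssim \ell(Q)^m$) with a trivial $L^\infty$ control coming from the size condition. Each mixed term, where at least one $f_i$ lives on $(2Q)^c$, is treated by slicing $(2Q)^c$ into dyadic annuli $2^{k+1}Q\setminus 2^kQ$; the factor $(t/(t+|x-y_i|))^{m+\alpha}$ inside $s_t$, combined with the $g_\lambda^*$ weight $(t/(t+|x-y|))^{m\lambda}$, gives $t^{\kappa\alpha} 2^{-k(m+\alpha)}$ decay, whose $t$-integration up to $\ell(Q)$ is summable precisely under $\alpha\le m(\lambda-2\kappa)$. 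This produces the local strong estimate
\begin{equation*}
\|g_{\lambda,\mu,Q}^*(\vec{f})\|_{L^p(\mu, Q\setminus H_Q)} \lesssim \mu(Q)^{1/p}\prod_{i=1}^{\kappa}\Bigl(\frac{1}{\mu(2Q)}\int |f_i|^{p_i}\,d\mu\Bigr)^{1/p_i},
\end{equation*}
i.e.\ a strong $L^{p_1}\times\cdots\times L^{p_\kappa}\to L^p$ bound on the big piece $Q\setminus H_Q$ of every good cube.

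For the second component, I would use the Nazarov--Treil--Volberg/Hyt\"onen random dyadic machinery referenced in the introduction to produce, for any level set $\{g_{\lambda,\mu}^*(\vec f) > \zeta\}$ and any threshold, a covering by $(2,\beta)$-doubling cubes with $\mathfrak{C}$-small boundary on which the hypothesis applies. Writing a global Calder\'on--Zygmund type decomposition of $\vec{f}$ at level $\zeta$ and applying the local strong bound on each $Q_j \setminus H_{Q_j}$, the contribution of $H_{Q_j}$ is controlled by $\delta_0 \mu(Q_j)$; iterating this decomposition on the residual sets and summing the geometric series in $\delta_0 < 1$ recovers the full $L^p$ bound. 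The far-field terms between different cubes are again absorbed by the $\lambda > 2\kappa$ tail decay as above.

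The main obstacle I anticipate is the first step: the multilinear endpoint transfer. Because the weak bound is given only for $\mathbf{1}_Q$ inputs while the $f_i$ are arbitrary and may live outside $Q$, one must carefully match the non-local parts against the kernel decay and simultaneously preserve the size of the exceptional set $H_Q$ across the $2^\kappa-1$ mixed terms. Controlling these mixed terms uniformly in $Q$, and ensuring the constants do not blow up as $\mu$ fails to be doubling, is what forces both the sharp numerology $\alpha \le m(\lambda-2\kappa)$ and the $\mathfrak{C}$-small boundary hypothesis; all other steps are standard once this endpoint-to-strong passage is established on each big piece.
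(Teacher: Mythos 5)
Your high-level architecture matches the paper's: a local ``big piece'' bound on each $(2,\beta)$-doubling cube with $\mathfrak{C}$-small boundary, followed by a globalization. But both of your components have genuine gaps, and the first one is fatal as written. The hypothesis is a \emph{testing condition}: a weak-type bound for the single input vector $(\mathbf{1}_Q,\ldots,\mathbf{1}_Q)$, not a weak-type bound for general inputs supported in $2Q$. Kolmogorov's inequality converts that into an $L^q(\mu\lfloor (Q\setminus H_Q))$ bound, $q<p_0$, \emph{for that one test vector only}; there is nothing to ``interpolate with a trivial $L^\infty$ control'' because no a priori bound for general $f_i\mathbf{1}_{2Q}$ exists at this stage --- producing one is precisely the local $T1$ problem. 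The paper's Theorem \ref{Big piece} solves it with random dyadic grids and the good/bad cube reduction, a martingale difference decomposition of $f_1,f_2$ adapted to $\Theta^{\mu}_{t,G}$, the matrix bound of Lemma \ref{delta-QR} for the separated/nested cases, and a paraproduct controlled by the testing condition through the stopping-cube/Carleson argument of Lemma \ref{phi-aQ}. None of this is replaceable by the annulus estimates you describe; those only handle the far parts (and indeed appear in the paper as Lemmas \ref{U(f)} and \ref{T(f)}), not the diagonal.

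The second component also skips a necessary intermediate step. The output of the big-piece theorem is a strong $L^{p_1}\times\cdots\times L^{p_\kappa}\to L^p$ bound on $G_Q$ \emph{for functions supported in $Q$}. To run the good-$\lambda$/Whitney argument (the paper's Theorem \ref{L^p} and Lemma \ref{good-lambda}), one needs on each Whitney cube $\widetilde{Q}_j$ the estimate $\mu(E_j)\lesssim (\epsilon\xi)^{-1/\kappa}\prod_i\bigl(\int_{2\widetilde{Q}_j}|f_i|\,d\mu\bigr)^{1/\kappa}$, i.e.\ a weak $L^1\times\cdots\times L^1\to L^{1/\kappa,\infty}(\mu\lfloor G_j)$ bound, because the smallness is extracted from the pointwise control $M_\mu f_1(x_0)\cdots M_\mu f_\kappa(x_0)\le\delta\xi$ with the $L^1$-normalized maximal function; Chebyshev applied to the strong $L^p$ bound would instead produce $M_\mu(|f_i|^{p_i})^{1/p_i}$, which is not controlled. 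The paper bridges this with Proposition \ref{M-M}: a Calder\'on--Zygmund decomposition of complex measures (Lemma \ref{C-Z decomposition}) converting the strong bound on the big piece into the endpoint $\mathfrak{M}(\Rn)\times\cdots\times\mathfrak{M}(\Rn)\to L^{1/\kappa,\infty}(\mu\lfloor G_Q)$ bound. Your ``iterate the decomposition on residual sets and sum a geometric series in $\delta_0$'' does not substitute for this: $\delta_0<1$ is only used once, to guarantee $\mu(G_Q)\ge\frac{1-\delta_0}{2}\mu(Q)$, and the actual contraction factor in the good-$\lambda$ inequality comes from the Whitney selection constant $\rho_0$ and the big-piece proportion $\theta$, not from iterating in $\delta_0$.
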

%%%%%%%%%%%%%%%%%%%%%%%%%%%%%%%%%%%%%%%%%%%%%%%%%%%%%%%%%%%%%%%%%%%%%%%

%%%%%%%%%%%%%%%%%%%%%%%%%%%%%%%%%%%%%%%%%%%%%%%%%%%%%%%%%%%%%%%%%%%%%%%
To show the above main theorem, we need to give a sufficient condition for the strong type boundedness based on an endpoint assumption.
\begin{theorem}\label{L^p}
Let $\lambda > 2 \kappa$, $0 < \alpha \leq m(\lambda-2\kappa)$ and $1 < p_1, \cdots,p_{\kappa}<\infty$ with $\frac{1}{p}=\frac{1}{p_1} + \cdots + \frac{1}{p_\kappa}$.
Assume that $\mu$ is a power bounded measure. Let $\beta > 0$ and $\mathfrak{C}$ be the big enough numbers, depending only on the dimension $n$, and $\theta \in (0,1)$. Suppose that
for each $(2,\beta)$-doubling cube $Q$ with $\mathfrak{C}$-small boundary, there exists a subset $G_Q \subset Q$ such that $\mu(G_Q) \geq \theta \mu(Q)$ and
$g_{\lambda}^* : \mathfrak{M}(\Rn) \times \cdots \times \mathfrak{M}(\Rn) \rightarrow L^{\frac{1}{\kappa},\infty}(\mu \lfloor G_Q)$ is bounded with a uniform constant independent of
$Q$.
Then there holds that
$$
\big\| g_{\lambda,\mu}^*(\vec{f}) \big\|_{L^p(\mu)}
\lesssim \prod_{i=1}^{\kappa} \big\|f_i\big\|_{L^{p_i}(\mu)}.
$$
\end{theorem}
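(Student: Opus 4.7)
The plan is to deduce the strong $L^p$ bound from the weak endpoint hypothesis through a good-$\lambda$ inequality that compares $g_{\lambda,\mu}^*(\vec f)$ with a multi-sublinear maximal function of $\vec f$, and then to integrate this inequality against level sets. Specifically, I would introduce
\[
\mathcal{M}_\mu(\vec f)(x) = \sup_{Q\ni x}\prod_{i=1}^{\kappa}\frac{1}{\mu(\eta Q)}\int_Q|f_i|\,d\mu,
\]
with an enlargement parameter $\eta>1$ chosen as in the non-homogeneous theory of upper doubling measures, and aim to prove that for every $\gamma>0$ small enough there exists $\eta_0\in(0,1)$ with
\[
\mu\bigl\{g_{\lambda,\mu}^*(\vec f)>2\zeta,\ \mathcal{M}_\mu(\vec f)\leq \gamma \zeta\bigr\}\leq \eta_0\,\mu\bigl\{g_{\lambda,\mu}^*(\vec f)>\zeta\bigr\}.
\]

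To establish the good-$\lambda$ inequality I would Whitney-decompose the level set $\{g_{\lambda,\mu}^*(\vec f)>\zeta\}$ into cubes $\{R_j\}$, and for each $R_j$ use the density of $(2,\beta)$-doubling cubes with $\mathfrak{C}$-small boundary (standard non-homogeneous constructions going back to Nazarov--Treil--Volberg and Tolsa) to find such a cube $Q_j\supset R_j$ with $\ell(Q_j)\sim\ell(R_j)$. The hypothesis then furnishes $G_{Q_j}\subset Q_j$ with $\mu(G_{Q_j})\geq\theta\mu(Q_j)$ on which $g_\lambda^*$ obeys the uniform weak endpoint bound. For each $Q_j$ I would next split $f_i=f_i^{\mathrm{loc}}+f_i^{\mathrm{glob}}$ with $f_i^{\mathrm{loc}}=f_i\mathbf{1}_{AQ_j}$ for a sufficiently large absolute $A$, and by sublinearity of $g_{\lambda,\mu}^*$ localize the control of $\{g_{\lambda,\mu}^*(\vec f)>2\zeta\}\cap R_j\cap\{\mathcal{M}_\mu(\vec f)\leq\gamma\zeta\}$ to a local piece handled by the weak endpoint hypothesis and a global piece handled by $\mathcal{M}_\mu(\vec f)$.

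The local piece is controlled by the endpoint assumption: since
\[
\mu\bigl(\{x\in G_{Q_j}: g_\lambda^*(f_1^{\mathrm{loc}}\mu,\ldots,f_\kappa^{\mathrm{loc}}\mu)(x)>\zeta/4\}\bigr)^\kappa\lesssim \zeta^{-1}\prod_{i=1}^\kappa\int_{AQ_j}|f_i|\,d\mu
\]
and since $R_j$ meets $\{\mathcal{M}_\mu(\vec f)\leq\gamma\zeta\}$, the right-hand side is bounded by a constant times $\gamma\mu(Q_j)$, which absorbs into $\eta_0\,\mu(R_j)$ once $\gamma$ is chosen small relative to $\theta$ and the doubling constants. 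The complementary portion inside $Q_j\setminus G_{Q_j}$ has measure at most $(1-\theta)\mu(Q_j)$ and likewise fits into the good-$\lambda$ estimate. The global piece is dealt with via the pointwise oscillation bound
\[
\bigl|g_{\lambda,\mu}^*(\vec f)(x)-g_{\lambda,\mu}^*(f_1^{\mathrm{loc}}\mu,\ldots,f_\kappa^{\mathrm{loc}}\mu)(c_{Q_j})\bigr|\lesssim \mathcal{M}_\mu(\vec f)(x),\qquad x\in R_j,
\]
extracted from the size and H\"older conditions on $s_t$ by splitting the $t$-integration into $t\lesssim\ell(Q_j)$ and $t\gtrsim\ell(Q_j)$ and decomposing the spatial integrals into annuli $2^k Q_j\setminus 2^{k-1}Q_j$. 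Once the good-$\lambda$ inequality is in hand, the standard level-set integration yields $\|g_{\lambda,\mu}^*(\vec f)\|_{L^p(\mu)}\lesssim\|\mathcal{M}_\mu(\vec f)\|_{L^p(\mu)}$, and the known $L^{p_1}(\mu)\times\cdots\times L^{p_\kappa}(\mu)\to L^p(\mu)$ boundedness of the multi-sublinear maximal function on upper doubling measures closes the proof.

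The hard part will be the oscillation estimate for the global piece. The slow decay factor $(t/(t+|x-y|))^{m\lambda}$ couples with $\kappa$ different spatial variables $y_i$, each of which may independently be near or far from $Q_j$, so one must organize a multilinear annular decomposition and track powers of $2^k$ through the product of $\kappa$ kernel pieces. Convergence of the resulting geometric series after squaring and integrating the weight is precisely what forces $\lambda>2\kappa$ and $0<\alpha\leq m(\lambda-2\kappa)$, and making the constants compatible with the small-$\gamma$ absorption in the local piece is the technical crux of the argument.
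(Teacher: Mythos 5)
Your overall strategy coincides with the paper's (Section \ref{Sec-lambda}): a good-$\lambda$ inequality against a product of maximal functions, proved by Whitney-decomposing the level set, selecting doubling cubes with small boundary, and applying the big-piece weak-type hypothesis to the spatially localized functions. However, two of your steps fail as stated. The most serious is the ``pointwise oscillation bound'' $|g_{\lambda,\mu}^*(\vec f)(x)-g_{\lambda,\mu}^*(\vec f^{\,\mathrm{loc}})(c_{Q_j})|\lesssim \mathcal M_\mu(\vec f)(x)$. For $t\gtrsim\ell(Q_j)$ the operator applied to the purely global parts is, at \emph{each} scale $t$, comparable to $\prod_i M_\mu f_i(x)$ (see the bound \eqref{U-L}), and integrating $dt/t$ over all scales up to the diameter of $\operatorname{supp}\vec f$ produces an unbounded logarithmic factor; so this tail is not $O(\mathcal M_\mu(\vec f)(x))$. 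It can only be controlled by comparing the tail of the \emph{full} operator at $x$ with its value at a point $x'\in\rho Q_j$ lying \emph{outside} the level set $\{g^*>\zeta\}$ --- such a point exists by the Whitney property $\rho Q_j\cap\Omega_\zeta^c\neq\emptyset$, and the comparison of the two tails is what Lemma \ref{T(f)} provides --- yielding $\mathrm{tail}(x)\le \zeta+C\mathcal M_\mu(\vec f)(x)$. Your reference point $c_{Q_j}$ lies \emph{inside} $\Omega_\zeta$ and carries no information, and in any case a bound at a single point cannot feed the weak-type hypothesis, which is a measure estimate; what one must show is $g^*(\vec f^{\,\mathrm{loc}})(x)>c\,\zeta$ for the relevant $x$ themselves (the inclusion \eqref{subset} in the paper).

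The second problem is quantitative but fatal to your integration step. Since the hypothesis controls only a $\theta$-fraction of each selected cube, and since for a non-doubling $\mu$ one cannot attach a doubling small-boundary cube of comparable size to \emph{every} Whitney cube --- Tolsa's Lemma \ref{Whitney decomposition} only produces a disjoint subfamily $\{\widetilde Q_j\}_{j\in S}$ capturing a $\frac{1}{8\rho_0}$-fraction of $\mu(\Omega_\zeta)$ --- the constant $\eta_0$ in your good-$\lambda$ inequality cannot be made small: the best available is $\eta_0=1-\theta/(16\rho_0)$, regardless of how small $\gamma$ is. With your jump $\zeta\to 2\zeta$ the level-set integration gives $\|g^*\|_{L^p}^p\le 2^p\eta_0\|g^*\|_{L^p}^p+C\gamma^{-p}\|\mathcal M_\mu(\vec f)\|_{L^p}^p$, and $2^p\eta_0<1$ fails. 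You must use the jump $(1+\epsilon)\zeta$ with $\epsilon$ small enough that $(1+\epsilon)^p(1-\theta/(16\rho_0))<1$, and then take $\gamma=\gamma(\epsilon)$. Finally, absorbing the term $(1+\epsilon)^p\eta_0\|g^*\|_{L^p}^p$ requires the a priori qualitative information that $\|g^*_{\lambda,\mu}(\vec f)\|_{L^p(\mu)}<\infty$ and that the level sets are open, proper and of finite measure (prerequisites for the Whitney decomposition); the paper secures this by working with the truncated operator $g^*_{\lambda,\mu,t_0}$ and Lemma \ref{Omega}, a step your outline omits.
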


%%%%%%%%%%%%%%%%%%%%%%%%%%%%%%%%%%%%%%%%%%%%%%%%%%%%%%%%%%%%%%%%%%%%%%%%
Moreover, we may directly deduce the strong bound on a big piece from the weak type assumption in Theorem \ref{Local T1}. We will see that it needs some delicate arguments to obtain Theorem \ref{Local T1} from the result below.  
\begin{theorem}\label{Big piece}
Let $\lambda > 2 \kappa$, $0 < \alpha \leq m(\lambda-2\kappa)$ and $1 < p,p_1, \cdots,p_{\kappa}<\infty$ with $\frac{1}{p}=\frac{1}{p_1} + \cdots + \frac{1}{p_\kappa}$.
Suppose that $\mu$ is a power bounded measure, $Q \subset \Rn$ is a fixed cube. Assume that for some $p_0>0$ and for some $H_Q \subset \Rn$ satisfying $\mu(H_Q) \leq \delta_0 \mu(Q)$, there holds that
\begin{equation}\label{Weak}
\sup_{\zeta > 0} \zeta^{p_0} \mu \big(\{x \in Q \setminus H_Q; g_{\lambda,\mu,Q}^*(\mathbf{1}_Q,\cdots,\mathbf{1}_Q)(x) > \zeta \}\big)
\leq C_0 \mu(Q).
\end{equation}
Then there exists $G_Q \subset Q \setminus H_Q$ so that $\mu(G_Q) \geq \frac{1-\delta_0}{2} \mu(Q)$ and
$$
\big\| \mathbf{1}_{G_Q} g_{\lambda,\mu}^*(\vec{f}) \big\|_{L^p(\mu)}
\lesssim \prod_{i=1}^{\kappa}\big\|f_i\big\|_{L^{p_i}(\mu)}
$$
for each $f_i \in L^{p_i}(\mu)$ with $\supp(f_i) \subset Q$, $i=1,\cdots,\kappa$.
\end{theorem}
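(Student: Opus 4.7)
The plan is to choose $G_Q$ as a sublevel set of the diagonal operator, to separate scales, and then to run a Calder\'on-Zygmund-type decomposition on each input $f_i$, controlling each of the $2^\kappa$ resulting contributions either by the H\"older estimate on the kernel or by the weak hypothesis on the diagonal.

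First I would set $\Lambda := \bigl(\tfrac{2C_0}{1-\delta_0}\bigr)^{1/p_0}$ and define
$$G_Q := \bigl\{x \in Q\setminus H_Q : g_{\lambda,\mu,Q}^*(\mathbf{1}_Q,\ldots,\mathbf{1}_Q)(x) \le \Lambda\bigr\}.$$
Hypothesis \eqref{Weak} immediately yields $\mu((Q\setminus H_Q)\setminus G_Q) \le \tfrac{1-\delta_0}{2}\mu(Q)$, hence $\mu(G_Q) \ge \tfrac{1-\delta_0}{2}\mu(Q)$, and we have a uniform pointwise bound $\Lambda$ for the diagonal on $G_Q$. Next I would separate scales by writing $g_{\lambda,\mu}^*(\vec f)^2 = g_{\lambda,\mu,Q}^*(\vec f)^2 + R_Q(\vec f)$, where $R_Q$ collects the contribution of $t>\ell(Q)$. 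Since $\supp f_i\subset Q$, the size condition on $s_t$ gives $|\Theta_t^\mu(\vec f)(y)| \lesssim \frac{t^{\kappa\alpha}}{(t+|y-c_Q|)^{\kappa(m+\alpha)}}\prod_i\|f_i\|_{L^1(\mu)}$, and combining with $\|f_i\|_{L^1(\mu)}\le \mu(Q)^{1-1/p_i}\|f_i\|_{L^{p_i}(\mu)}$ and the upper-doubling estimate $\mu(Q)\lesssim \ell(Q)^m$, a direct computation yields $\|R_Q(\vec f)^{1/2}\|_{L^p(\mu\lfloor Q)} \lesssim \prod_i\|f_i\|_{L^{p_i}(\mu)}$. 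Hence it suffices to estimate $\|\mathbf{1}_{G_Q} g_{\lambda,\mu,Q}^*(\vec f)\|_{L^p(\mu)}$.

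For this local piece I would apply a Calder\'on-Zygmund-type decomposition of each input, at a level matched to the target strong-type estimate: write $f_i = g_i + \sum_j b_{i,j}$, where $\|g_i\|_\infty$ is appropriately controlled, each $b_{i,j}$ is supported on a stopping cube $Q_{i,j}\subset Q$ with $\int b_{i,j}\,d\mu = 0$, and the stopping cubes obey the packing estimate $\sum_j\mu(Q_{i,j})\lesssim \|f_i\|_{L^{p_i}}^{p_i}/\alpha_i^{p_i}$ for the chosen decomposition level $\alpha_i$. Multilinear expansion of $\prod_i f_i$ splits $\Theta_t^\mu(\vec f)$ into $2^\kappa$ terms indexed by subsets $S\subseteq\{1,\ldots,\kappa\}$ encoding which slots carry the bad factor. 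The pieces with $S\neq\emptyset$ are handled by exploiting the H\"older condition of $s_t$ in any input $i\in S$: the cancellation $\int b_{i,j}\,d\mu = 0$ lets one replace $s_t(y,\vec z)$ by $s_t(y,\vec z) - s_t(y,\ldots,c_{Q_{i,j}},\ldots)$, gaining a factor $(\ell(Q_{i,j})/(t+|y-c_{Q_{i,j}}|))^\alpha$; integrating in $(y,t)$ and summing in $j$ via the packing bound produces an $L^p$ estimate of the expected order that makes no use of $G_Q$.

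The main obstacle is the all-good piece $\Theta_t^\mu(g_1\mu,\ldots,g_\kappa\mu)$: the weak hypothesis is stated with the absolute value \emph{outside} the $\vec z$-integration, so one cannot pointwise dominate $|\Theta_t^\mu(g_1\mu,\ldots,g_\kappa\mu)|$ by $\prod_i\|g_i\|_\infty\cdot |\Theta_t^\mu(\mathbf{1}_Q,\ldots,\mathbf{1}_Q)|$, since the phases of $s_t$ do not align. My plan to overcome this is to decompose each $g_i$ telescopically via its averages on a refining dyadic hierarchy inside $Q$, so that the leading term becomes $\prod_i\langle g_i\rangle_Q\cdot\Theta_t^\mu(\mathbf{1}_Q,\ldots,\mathbf{1}_Q)$, directly controlled by $\Lambda$ on $G_Q$, while the martingale remainders at each refinement level carry an extra H\"older-type gain (again from the $\alpha$-regularity of $s_t$ in the $z_i$-variables) that makes them summable in $L^p$. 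Assembling the $2^\kappa$ contributions then yields the desired strong-type estimate on $G_Q$, completing the proof.
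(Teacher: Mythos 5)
Your choice of $G_Q$ is exactly the paper's: $G_Q = (Q\setminus H_Q)\setminus S_Q$ with $S_Q$ the superlevel set of the diagonal at height $\zeta_0=(2C_0/(1-\delta_0))^{1/p_0}$, and the scale separation $t\gtrless\ell(Q)$ (the paper performs it on the testing functions, you perform it on $\vec f$; both work via the size condition and $\mu(Q)\lesssim\ell(Q)^m$). From that point on, however, the proposal has two genuine gaps. First, a Calder\'on--Zygmund decomposition at a single level only controls the \emph{measure} of a superlevel set: the bad pieces $b_{i,j}$ are estimated in $L^1$ off $2Q_{i,j}$ and discarded on $\bigcup_j 2Q_{i,j}$ via the packing bound, which yields a weak-type inequality at that level, not the claimed $L^p$ bound. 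To upgrade to strong type you would need either a good-$\lambda$ argument or interpolation over a range of exponents, neither of which appears in the plan (the paper isolates precisely this upgrade in Theorem \ref{L^p}, which is proved by a separate good-$\lambda$ inequality and is \emph{not} part of the proof of Theorem \ref{Big piece}).

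Second, and more seriously, the treatment of the all-good piece does not work as described. The H\"older condition in $z_i$ only produces a gain $(\ell(P)/t)^{\alpha}$ for martingale remainders supported on cubes $P$ with $\ell(P)\lesssim t$; for a fixed small $t$, every level of your telescoping hierarchy with $\ell(P)\gg t$ contributes with \emph{no} gain whatsoever. Those large-scale remainders do not sum; they regenerate a paraproduct $\sum_{k}\langle\Delta_{R^{(k)}}f_1\rangle\langle f_2\rangle_{R^{(k)}}\,\Theta_t^{\mu}(\mathbf 1,\mathbf 1)$ whose coefficients live on \emph{all} cubes of the hierarchy, so the single pointwise bound $\prod_i\langle g_i\rangle_Q\cdot\Lambda$ at the top cube $Q$ is not enough: one needs a Carleson-embedding/principal-cubes argument (the paper's Lemma \ref{phi-aQ}) exploiting that $\sum_{R\subset F}a_R(x)^2\lesssim\mathbf 1_F(x)$, which is where the testing condition is really consumed. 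Moreover, in the non-doubling setting the "boundary" terms of the telescoping (the pieces $\mathbf 1_{(R^{(k-1)})^c}\Delta_{R^{(k)}}f_1$ when the evaluation point sits in a small cube deep inside a much larger one) are only summable because of the goodness of the small cube, i.e.\ $d(R,\partial R^{(k-1)})\gtrsim\ell(R)^{\gamma}\ell(R^{(k-1)})^{1-\gamma}$; this forces the random-dyadic-grid/good-cube reduction of Nazarov--Treil--Volberg, which your proposal omits entirely and for which I see no substitute in a fixed hierarchy inside $Q$.
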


For simplicity, we only give the proofs for the case $\kappa=2$. And the general case can be demonstrated similarly but with more complicated calculations and symbols.
%%%%%%%%%%%%%%%%%%%%%%%%%%%%%%%%%%%%%%%%%%%%%%%%%%%%%%%%%%%%%%%%%%%%%%%

%%%%%%%%%%%%%%%%%%%%%%%%%%%%%%%%%%%%%%%%%%%%%%%%%%%%%%%%%%%%%%%%%%%%%%%
\begin{remark}
The above theorems can be extended to more general non-doubling measures.
\begin{enumerate}
\item
Let $\lambda : \Rn \times (0,\infty) \rightarrow (0,\infty)$ be a function so that $r \mapsto \lambda(x,r)$ is non-decreasing for all $x \in \Rn$ and $r>0$. We say that a Borel
measure $\mu$ in $\Rn$ is upper doubling \cite{Ht2} with the dominating function $\lambda$, if there holds that
$$
\mu(B(x,r)) \leq \lambda(x,r) \leq C_{\lambda} \lambda(x,r/2),\ \ x \in \Rn, \ r > 0.
$$
Then we define the $g_{\lambda}^*$-function adapted to the upper doubling measure $\mu$ :
\begin{align*}
g_{\epsilon,\mu}^*(\vec{\nu})(x)
= \bigg(\iint_{\R^{n+1}_{+}} \vartheta_{t,\epsilon}(x,y) |\Theta_t (\vec{\nu})(y)|^2
\frac{d\mu(y)}{\lambda(x,t)} \frac{dt}{t}\bigg)^{1/2},
\end{align*}
where $\Theta_t (\vec{\nu})$ is the same as that in Definition \ref{Def} and
$$
\vartheta_{t,\epsilon}(x,y)
:=\frac{t^{\epsilon_1} \lambda(x,t)^{\epsilon_2}}{t^{\epsilon_1} \lambda(x,t)^{\epsilon_2} + |x-y|^{\epsilon_1} \lambda(x,|x-y|)^{\epsilon_2}}, \quad \epsilon_1>0, \epsilon_2>2m+1.
$$
\item
The multilinear Lusin area integral $S$ associated with the upper doubling measure $\mu$ is defined by
$$
S_{\mu}(\vec{f})(x):=\bigg(\int_{0}^{\infty} \int_{\Gamma(x,t)} |\Theta_t^\mu (\vec{f})(y)|^2 \frac{d\mu(y)}{\lambda(x,t)} \frac{dt}{t}\bigg)^{1/2},
$$
where $\Gamma(x,t)=\{y \in \Rn;|x-y| \leq t\}$.
\end{enumerate}
Theorems $\ref{Local T1}$, $\ref{L^p}$ and $\ref{Big piece}$ also hold for $g_{\epsilon,\mu}^*$ and $S_{\mu}$ with the upper doubling measure $\mu$.
\end{remark}
%%%%%%%%%%%%%%%%%%%%%%%%%%%%%%%%%%%%%%%%%%%%%%%%%%%%%%%%%%%%%%

%%%%%%%%%%%%%%%%%%%%%%%%%%%%%%%%%%%%%%%%%%%%%%%%%%%%%%%%%%%%%%%%%%%%%%
%%%%%%%%%%%%%%%%%%%%%%%%%%%%%%%%%%%%%%%%%%%%%%%%%%%%%%%%%%%%%%%%%%%%%%
\section{Some Standard Estimates}\label{Sec-estimates}
The goal of this section is to establish several important key lemmas, which will be applied in the endpoint estimate and to establish a good lambda inequality.
\begin{lemma}\label{U(f)}
For any $x,x_0 \in \Rn$ and $t>0$, we have the pointwise domination :
\begin{equation}\label{U-L}
\mathscr{U}_t(\vec{f})(x):= \bigg(\int_{\Rn} \Big(\frac{t}{t + |x - y|}\Big)^{m \lambda} |\Theta^{\mu}_t (\vec{f})(y)|^2 \frac{d\mu(y)}{t^m}\bigg)^{1/2}
\lesssim \prod_{i=1}^{2} \mathscr{L}_t(f_i)(x),
\end{equation}
and
\begin{equation}\label{U-U-L}
\big| \mathscr{U}_t(\vec{f})(x) - \mathscr{U}_t(\vec{f})(x_0) \big|
\lesssim t^{-1} |x-x_0| \prod_{i=1}^{2} \mathscr{L}_t(f_i)(\bar{x}),
\end{equation}
where $\bar{x}=x_0+\theta (x-x_0)$ and
$$
\mathscr{L}_t(f)(x):= \int_{\Rn}\frac{t^{\alpha/4}}{(t+|x-z|)^{m+\alpha/4}}|f(z)|d\mu(z).
$$
\end{lemma}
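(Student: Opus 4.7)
The plan is to prove (\ref{U-L}) in three steps: (i) reduce the bilinear form $\Theta_t^\mu(\vec{f})(y)$ to a product of linear averages via the size condition, (ii) establish a ``transfer of center'' inequality that moves the reference point from $y$ to $x$, and (iii) integrate out the residual weight against the power-bounded measure. First, using the size condition on $s_t$,
\[
|\Theta_t^\mu(\vec{f})(y)| \lesssim T_t(f_1)(y)\, T_t(f_2)(y), \qquad T_t(f)(y) := \int_{\Rn} \frac{t^\alpha |f(z)|}{(t+|y-z|)^{m+\alpha}}\, d\mu(z),
\]
and since $(t+|y-z|)^{3\alpha/4} \ge t^{3\alpha/4}$, this upgrades at once to $T_t(f)(y) \le \mathscr{L}_t(f)(y)$.

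Second, I would invoke the elementary geometric inequality $(t+|x-y|)(t+|y-z|) \gtrsim t(t+|x-z|)$, which when plugged into the integral defining $\mathscr{L}_t(f)(y)$ yields the transfer-of-center estimate
\[
\mathscr{L}_t(f)(y) \lesssim \bigl(1 + \tfrac{|x-y|}{t}\bigr)^{m+\alpha/4} \mathscr{L}_t(f)(x).
\]
Substituting this into $\mathscr{U}_t(\vec{f})(x)^2$ and factoring out $\prod_i \mathscr{L}_t(f_i)(x)^2$ reduces (\ref{U-L}) to showing
\[
\int_{\Rn} \Bigl(\tfrac{t}{t+|x-y|}\Bigr)^{m\lambda - 4m - \alpha} \frac{d\mu(y)}{t^m} \lesssim 1,
\]
which is a standard dyadic-annulus computation once one combines $\mu(B(x,r)) \lesssim r^m$ with the hypothesis $\alpha \le m(\lambda - 2\kappa) = m(\lambda-4)$. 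I expect this exponent balance to be the delicate point, since it is precisely where the quantitative relation between $\lambda$, $\kappa$, and $\alpha$ is consumed, and it dictates the choice of the exponent $\alpha/4$ in the definition of $\mathscr{L}_t$.

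For the Lipschitz-type bound (\ref{U-U-L}), I would regard $\mathscr{U}_t(\vec{f})(x)$ as the $L^2(d\mu(y))$-norm of $\phi_x(y)\, \Theta_t^\mu(\vec{f})(y)\, t^{-m/2}$ with $\phi_x(y) := (t/(t+|x-y|))^{m\lambda/2}$, and apply the reverse triangle inequality to get
\[
|\mathscr{U}_t(\vec{f})(x) - \mathscr{U}_t(\vec{f})(x_0)|^2 \le \int_{\Rn} |\phi_x(y) - \phi_{x_0}(y)|^2\, |\Theta_t^\mu(\vec{f})(y)|^2\, \frac{d\mu(y)}{t^m}.
\]
A mean value theorem applied to the map $x \mapsto \phi_x(y)$ produces $|\phi_x(y) - \phi_{x_0}(y)| \lesssim (|x-x_0|/t)\, \phi_{\bar{x}(y)}(y)$ for some $\bar{x}(y) = x_0 + \theta(y)(x-x_0)$, with the prefactor $t^{-1}$ arising from differentiating $s \mapsto (t/(t+s))^{m\lambda/2}$. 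In the regime $|x-x_0| < t/2$ the weights $\phi_{\bar{x}(y)}(y)$ and $\phi_{\bar{x}}(y)$ are uniformly comparable for any fixed $\bar{x}$ on the segment, so one obtains $|\mathscr{U}_t(\vec{f})(x) - \mathscr{U}_t(\vec{f})(x_0)| \lesssim (|x-x_0|/t)\, \mathscr{U}_t(\vec{f})(\bar{x})$, and applying (\ref{U-L}) at $\bar{x}$ closes the argument; the case $|x-x_0| \ge t/2$ is immediate from $|A-B| \le A+B$ and (\ref{U-L}).
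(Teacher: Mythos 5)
Your reduction $|\Theta_t^\mu(\vec f)(y)| \lesssim \prod_i T_t(f_i)(y) \le \prod_i \mathscr{L}_t(f_i)(y)$ and the transfer inequality $\mathscr{L}_t(f)(y) \lesssim (1+|x-y|/t)^{m+\alpha/4}\mathscr{L}_t(f)(x)$ are both correct, but the final step of your proof of \eqref{U-L} does not close. After squaring, the two transfers cost a factor $(1+|x-y|/t)^{4m+\alpha}$, and you are left with
\[
\int_{\Rn}\Big(\frac{t}{t+|x-y|}\Big)^{m\lambda-4m-\alpha}\frac{d\mu(y)}{t^m}.
\]
For a power bounded measure this integral is $\lesssim 1$ only when the exponent exceeds $m$, i.e.\ when $\alpha< m(\lambda-5)$; the standing hypothesis $\alpha\le m(\lambda-2\kappa)=m(\lambda-4)$ only guarantees that the exponent is nonnegative. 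Concretely, with $\lambda=4.5$ and $\alpha=m/2$ the exponent is exactly $0$ and the integral is $\mu(\Rn)/t^m$, which is infinite in general. The loss is not an artifact of the exponent $\alpha/4$: transferring the center uniformly must move at least $m$ powers of decay per function, costing at least $(1+|x-y|/t)^{4m}$ after squaring, so this route intrinsically requires $\lambda>5$ and cannot cover the range $4<\lambda\le 5$.

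The paper avoids this by splitting $\Rn$ (in the $y$ variable) according to whether $|y-z_i|\le |x-z_i|/2$ for each $i$. In the ``far'' regions one has $(t+|y-z_i|)\gtrsim(t+|x-z_i|)$, so the center is transferred with no loss and the full weight $(t/(t+|x-y|))^{m\lambda}$ remains to make the $y$-integral converge (needing only $\lambda>1$). In the ``near'' regions one instead keeps the kernel's decay $(t+|y-z_i|)^{-(2m+2\alpha)}$ to control the $y$-integral, and uses $|x-y|\ge|x-z_i|/2$ to convert part of the weight into decay in $|x-z_i|$; it is only here that $m\lambda\ge 4m+\alpha$ is consumed. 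Your mean-value argument for \eqref{U-U-L} is essentially the paper's (and is in fact more careful about the $y$-dependence of the intermediate point), but since it invokes \eqref{U-L} at $\bar x$, the gap above must be repaired first.
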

\begin{proof}
The inequality $(\ref{U-U-L})$ is a simple application of $(\ref{U-L})$. Actually,
\begin{align*}
\mathscr{P}_{t}(y)
&:=\bigg| \bigg(\frac{t}{t + |x - y|}\bigg)^{m \lambda/2} - \bigg(\frac{t}{t + |x_0 - y|}\bigg)^{m \lambda/2} \bigg|
\\%%%%%%%%%%
&\lesssim \frac{|x-x_0|}{t} \bigg(\frac{t}{t + |\bar{x} - y|}\bigg)^{m \lambda/2},
\end{align*}
where $\bar{x}=x_0+\theta (x-x_0)$.
This implies that
\begin{align*}
\big| \mathscr{U}_t(\vec{f})(x) - \mathscr{U}_t(\vec{f})(x_0) \big|
&\leq \bigg(\int_{\Rn} \mathscr{P}_{t}(y)^2 |\Theta^{\mu}_t (\vec{f})(y)|^2 \frac{d\mu(y)}{t^m}\bigg)^{1/2} \\
&\lesssim t^{-1} |x-x_0| \mathscr{U}_t(\vec{f})(\bar{x})
\lesssim t^{-1} |x-x_0| \prod_{i=1}^{2} \mathscr{L}_t(f_i)(\bar{x}).
\end{align*}

In order to obtain $(\ref{U-L})$, we split the underlying space into four pieces :
\begin{eqnarray*}
\Xi_1 &:=& \big\{y \in \Rn; |y-z_i| \leq |x-z_i|/2,\ i=1,2 \big\}, \\
\Xi_2 &:=& \big\{y \in \Rn; |y-z_i| > |x-z_i|/2,\ i=1,2 \big\}, \\
\Xi_3 &:=& \big\{y \in \Rn; |y-z_1| \leq |x-z_1|/2, |y-z_2| > |x-z_2|/2 \big\}, \\
\Xi_4 &:=& \big\{y \in \Rn; |y-z_1| > |x-z_1|/2, |y-z_2| \leq |x-z_2|/2 \big\}.
\end{eqnarray*}
In the first case, there holds that
$$
|x-y| \geq |x-z_i| - |y-z_i| \geq |x-z_i|/2,\ i=1,2.
$$
Note that
$$
\Big(\frac{t}{t + |x - y|}\Big)^{m \lambda}
\leq \Big(\frac{t}{t + |x - y|}\Big)^{4m + \alpha}
\lesssim  t^{4m} \prod_{i=1}^2 \frac{t^{\alpha/2}}{(t + |x - z_i|)^{2m + \alpha/2}}.
$$
Hence, it yields that
\begin{align*}
\mathscr{U}_{t,1}(\vec{f})(x)
&\lesssim \int_{\R^{2n}} \bigg(\int_{\Rn} \prod_{i=1}^2 \frac{t^{2m + 2 \alpha}}{(t + |y - z_i|)^{2m + 2 \alpha}} \frac{d\mu(y)}{t^m} \bigg)^{1/2}
\\%%%%%%%%%%%
&\qquad\times \prod_{i=1}^2 \frac{t^{\alpha/4}}{(t + |x - z_i|)^{m + \alpha/4}} |f_i(z_i)| d\mu(z_i) \\
&\lesssim \mathscr{L}_t(f_1)(x) \mathscr{L}_t(f_2)(x).
\end{align*}
It is easy to handle the second term.
\begin{align*}
\mathscr{U}_{t,2}(\vec{f})(x)
&\lesssim \bigg(\int_{\Rn} \Big(\frac{t}{t + |x - y|}\Big)^{m \lambda} \frac{d\mu(y)}{t^m} \bigg)^{1/2}
\\%%%%%%%%%%%%%
&\qquad\times \prod_{i=1}^2 \int_{\Rn} \frac{t^{\alpha}}{(t + |x - z_i|)^{m + \alpha}} |f_i(z_i)| d\mu(z_i) \\
&\lesssim \mathscr{L}_t(f_1)(x) \mathscr{L}_t(f_2)(x).
\end{align*}
As for the third term, we notice the facts that
$$
\Big(\frac{t}{t + |x - y|}\Big)^{m \lambda/2}
\leq \Big(\frac{t}{t + |x - y|}\Big)^{2m + \alpha/2}
\lesssim  t^{2m} \frac{t^{\alpha/2}}{(t + |x - z_1|)^{2m + \alpha/2}},
$$
and
$$
\frac{t^{2 \alpha}}{(t + |y - z_2|)^{2m + 2\alpha}}
\lesssim \frac{t^{2 \alpha}}{(t + |x - z_2|)^{2m + 2\alpha}}
\leq \frac{t^{\alpha}}{(t + |x - z_2|)^{2m + \alpha}} .
$$
Then we deduce that
\begin{align*}
\mathscr{U}_{t,3}(\vec{f})(x)
&\lesssim \int_{\R^{2n}} \bigg(\int_{\Rn} \frac{t^{2m + 2 \alpha}}{(t + |y - z_1|)^{2m + 2 \alpha}} \frac{d\mu(y)}{t^m} \bigg)^{1/2} \\
&\qquad\times \prod_{i=1}^2 \frac{t^{\alpha/4}}{(t + |x - z_i|)^{m + \alpha/4}} |f_i(z_i)| d\mu(z_i) \\
&\lesssim \mathscr{L}_t(f_1)(x) \mathscr{L}_t(f_2)(x).
\end{align*}
The last term is symmetric with the third one. This completes the proof.

\end{proof}
%%%%%%%%%%%%%%%%%%%%%%%%%%%%%%%%%%%%%%%%%%%%%%%%%%%%%%%%%%%%%
%%%%%%%%%%%%%%%%%%%%%%%%%%%%%%%%%%%%%%%%%%%%%%%%%%%%%%%%
\begin{lemma}\label{Omega}
Let $f_i$ (i=1,2) be a bounded function and has a compact support. For every $t_0 > 0$, the $t_0$-truncated version of $g_{\lambda,\mu}^*(f)$ is defined by
$$
g_{\lambda,\mu,t_0}^*(f_1,f_2)(x)
= \bigg(\int_{t_0}^{\infty} \int_{\Rn} \Big(\frac{t}{t + |x - y|}\Big)^{m \lambda} |\Theta_t^\mu (f_1,f_2)(y)|^2
\frac{d\mu(y) dt}{t^{m+1}}\bigg)^{\frac12}.
$$
Set
$$
\Omega_{\xi}:=\big\{x\in \Rn; g_{\lambda,\mu,t_0}^{*} (f_1,f_2)(x) > \xi \big\}, \ \ \text{for any} \ \xi >0.
$$
Then $\Omega_{\xi} \neq \Rn$, $\mu(\Omega_{\xi})<\infty$ and $\Omega_{\xi}$ is an open set.
\end{lemma}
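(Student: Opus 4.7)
The plan is to reduce all three claims to two pointwise facts about $g^*_{\lambda,\mu,t_0}(f_1,f_2)$: (a) a decay bound $g^*_{\lambda,\mu,t_0}(f_1,f_2)(x) \to 0$ as $|x| \to \infty$, which will give both $\Omega_\xi \neq \Rn$ and $\mu(\Omega_\xi) < \infty$; and (b) global Lipschitz continuity of $g^*_{\lambda,\mu,t_0}(f_1,f_2)$ on $\Rn$, which implies openness of $\Omega_\xi$. Both facts follow routinely from Lemma \ref{U(f)} once we exploit the truncation $t \geq t_0$ together with the boundedness and compact support of the $f_i$.

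Fix $R > 0$ with $\supp f_i \subset B(0,R)$ and $\|f_i\|_\infty \leq M$. A standard dyadic annular decomposition of $\Rn$ around $x$, combined with the power-bound $\mu(B(y,r)) \lesssim r^m$, yields the uniform estimate $\mathscr{L}_t(f_i)(x) \lesssim M$ for all $x \in \Rn$ and all $t > 0$. For $|x| > 2R$, the sharper comparison $|x-z| \geq |x|/2$ valid for $z \in B(0,R)$ gives instead
\begin{align*}
\mathscr{L}_t(f_i)(x) \lesssim M\, \mu(B(0,R)) \frac{t^{\alpha/4}}{(t + |x|)^{m + \alpha/4}}.
\end{align*}
Plugging this into the pointwise bound (\ref{U-L}) of Lemma \ref{U(f)} and integrating in $t \geq t_0$ yields
\begin{align*}
g^*_{\lambda,\mu,t_0}(f_1,f_2)(x)^2
\leq \int_{t_0}^\infty \mathscr{U}_t(f_1,f_2)(x)^2 \frac{dt}{t}
\lesssim \int_{t_0}^\infty \frac{t^{\alpha - 1}}{(t + |x|)^{4m + \alpha}}\, dt
\lesssim |x|^{-4m}
\end{align*}
for $|x|$ sufficiently large, after the change of variable $u = t/|x|$. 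Hence there exists $R_\xi > 0$ such that $\Omega_\xi \subset B(0, R_\xi)$, so $\Omega_\xi \neq \Rn$ and, by power-boundedness, $\mu(\Omega_\xi) \leq \mu(B(0, R_\xi)) \lesssim R_\xi^m < \infty$.

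For openness I will show that $g^*_{\lambda,\mu,t_0}(f_1,f_2)$ is globally Lipschitz on $\Rn$. The reverse triangle inequality for the $L^2((t_0,\infty), dt/t)$ norm gives
\begin{align*}
\big|g^*_{\lambda,\mu,t_0}(f_1,f_2)(x) - g^*_{\lambda,\mu,t_0}(f_1,f_2)(x_0)\big|^2
\leq \int_{t_0}^\infty \big|\mathscr{U}_t(f_1,f_2)(x) - \mathscr{U}_t(f_1,f_2)(x_0)\big|^2 \frac{dt}{t},
\end{align*}
and (\ref{U-U-L}) together with the uniform bound $\mathscr{L}_t(f_i) \lesssim M$ controls the right-hand side by
\begin{align*}
|x-x_0|^2\, M^4 \int_{t_0}^\infty t^{-3}\, dt \;\lesssim\; M^4\, t_0^{-2}\, |x-x_0|^2.
\end{align*}
Thus $g^*_{\lambda,\mu,t_0}(f_1,f_2)$ is continuous and $\Omega_\xi = \{g^*_{\lambda,\mu,t_0}(f_1,f_2) > \xi\}$ is open.

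The only delicate point throughout is ensuring that every $t$-integral above is finite. This is precisely where the truncation $t \geq t_0$ is essential (it removes the potential singularity at $t = 0$), while the decay factors $(t+|x|)^{-4m-\alpha}$ in the decay argument and $t^{-3}$ in the Lipschitz argument secure integrability at $t = \infty$.
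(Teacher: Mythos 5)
Your proof is correct and follows essentially the same two-step strategy as the paper: use the pointwise bound \eqref{U-L} of Lemma \ref{U(f)} to get decay of $g^*_{\lambda,\mu,t_0}(f_1,f_2)$ at infinity (hence $\Omega_\xi\neq\Rn$ and $\Omega_\xi$ contained in a ball of finite measure), and use \eqref{U-U-L} to get Lipschitz continuity (hence openness). The only differences are cosmetic: you estimate $\mathscr{L}_t(f_i)$ by the uniform bound $\lesssim \|f_i\|_\infty$ and integrate the full $t$-decay directly, whereas the paper splits off a factor $t^{-\epsilon}$ and invokes H\"older with the $L^{p_i}$ norms; both yield the same conclusions.
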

%%%%%%%%%%%%%%%%%%%%%%%%%%%%%%%%%%%%%%%%%%%%%%%%%%%%%%%%
\begin{proof}
We begin by showing that $\Omega_{\xi} \neq \Rn$ and $\mu(\Omega_{\xi})<\infty$. Let $r > 0$ such that $\supp f_i \subset B(0,r)$.
From Lemma $\ref{U(f)}$, for $t \geq t_0$, it follows that
\begin{align*}
\mathscr{U}_t(\vec{f})(x)
&\lesssim \prod_{i=1}^2 ||f_i||_{L^{\infty}(\mu)} \int_{B(0,r)} \frac{ d\mu(z_i)}{(t+|x-z_i|)^{m}} \\
&\lesssim C_{\vec{f}} \ \frac{r^{2m}}{(t+\dist(x,B(0,r)))^{2m}} \\
&\leq C_{\vec{f}} \ \frac{r^{2m}}{(t_0+\dist(x,B(0,r)))^{2m-\epsilon}} \frac{1}{t^{\epsilon}},
\end{align*}
where $\varepsilon \in (0,m(1-1/p))$. Then it yields that
\begin{equation}\label{C(f)}
g_{\lambda,\mu,t_0}^*(f)(x)
\leq C_{\vec{f},t_0} \frac{r^{2m}}{(t_0+\dist(x,B(0,r)))^{2m-\epsilon}},
\end{equation}
which gives that
\begin{equation}\label{Prior}
\big\| g_{\lambda,\mu,t_0}^*(\vec{f}) \big\|_{L^p(\mu)}
\leq C_{\vec{f},t_0} r^{2m} \bigg(\int_{\Rn} \frac{d\mu(x)}{(t_0+\dist(x,B(0,r)))^{p(2m-\epsilon)}} \bigg)^{1/p}
< \infty.
\end{equation}
Moreover, the inequality $(\ref{C(f)})$ also indicates that $$\lim\limits_{|x| \rightarrow \infty}g_{\lambda,\mu,t_0}^*(f)(x) = 0.$$
Thus, there exists a constant $R_0>0$ such that $\Omega_{\xi} \subset B(0,R_0)$, which implies that $\Omega_{\xi} \neq \Rn$ and $\mu(\Omega_{\xi}) < \infty$.

Then, in order to show $\Omega_{\xi}$ is an open set, it suffices to demonstrate the map $x \mapsto g_{\lambda,\mu,t_0}^*(f)(x)$ is continuous.
It is easy to see that
$$
\big| g_{\lambda,\mu,t_0}^*(\vec{f})(x) - g_{\lambda,\mu,t_0}^*(\vec{f})(x_0) \big|
\leq  \bigg(\int_{t_0}^{\infty} \big| \mathscr{U}_t(\vec{f})(x) - \mathscr{U}_t(\vec{f})(x_0) \big|^2 \frac{dt}{t}\bigg)^{1/2}.
$$
For any $t \geq t_0$, it follows from ($\ref{U-U-L}$) that 
\begin{align*}
\big| \mathscr{U}_t(\vec{f})(x) - \mathscr{U}_t(\vec{f})(x_0) \big|
&\lesssim \frac{|x-x_0|}{t^{1-\alpha/2}} \prod_{i=1}^2 \int_{\Rn} \frac{|f_i(z_i)|}{(t+|\bar{x}-z_i|)^{m+\alpha/4}} d\mu(z_i) \\
& \lesssim \frac{|x-x_0|}{t^{1-\alpha_0}} \prod_{i=1}^2 ||f_i||_{L^{p_i}(\mu)} \bigg(\int_{\Rn} \frac{d\mu(z_i)}{(t_0+|\bar{x}-z_i|)^{(m+\alpha_0 )p_i'}} \bigg)^{1/{p_i'}} \\
& \leq C_{t_0} \frac{|x-x_0|}{t^{1-\alpha_0}} \prod_{i=1}^2 ||f_i||_{L^{p_i}(\mu)},
\end{align*}
where the auxiliary number $\alpha_0 \in (0,1)$. Therefore, we deduce that
$$
\big| g_{\lambda,\mu,t_0}^*(f)(x) - g_{\lambda,\mu,t_0}^*(f)(x_0) \big|
\leq C_{t_0} |x-x_0| \prod_{i=1}^2 ||f_i||_{L^{p_i}(\mu)},
$$
which implies the continuity of $x \mapsto g_{\lambda,\mu,t_0}^*(f)(x)$.
This proves Lemma $\ref{Omega}$.
\end{proof}
%%%%%%%%%%%%%%%%%%%%%%%%%%%%%%%%%%%%%%%%%%%%%%%%%%%%%%%%%%%%%

%%%%%%%%%%%%%%%%%%%%%%%%%%%%%%%%%%%%%%%%%%%%%%%%%%%%%%%%%%%%%
\begin{lemma}\label{T(f)}
Let $c_0$ be a positive constant, $Q$ be a cube and $x,x' \in Q$. Let $f_i^0 = f_i \mathbf{1}_{2Q}$ and $f_i^{\infty} = f_i \mathbf{1}_{(2Q)^c}$, $i=1,\ldots,\kappa$. Then there holds
that
$$
\mathscr{T}(\vec{f^r})(x)
:= \bigg(\int_{c_0 \ell(Q)}^{\infty} \int_{\Rn} \mathscr{V}_{t,y}(x,x')^2 \big| \Theta_t^\mu (\vec{f^r})(y)\big|^2 \frac{d\mu(y) dt}{t^{m+1}}\bigg)^{1/2}
\lesssim \prod_{i=1}^{\kappa} M_{\mu}(f_i)(x),
$$
where $\vec{f^r}=(f_1^{r_1},\cdots,f_{\kappa}^{r_{\kappa}})$ with $r_i \in \{0,\infty\}$ and at lest one $r_i = \infty$, and
$$
\mathscr{V}_{t,y}(x,x'):=\bigg(\frac{t}{t+|x-y|}\bigg)^{m \lambda/2} - \bigg(\frac{t}{t+|x'-y|}\bigg)^{m \lambda/2} .
$$
\end{lemma}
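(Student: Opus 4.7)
\smallskip

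The plan is to exploit the fact that $\mathscr{V}_{t,y}(x,x')$ is a difference of two nearby values of a smooth function of $y$, which should produce a crucial factor $|x-x'|/t$ that gives enough decay in $t$ to integrate over $[c_0\ell(Q),\infty)$. First I would apply a mean value estimate, exactly analogous to the one used inside the proof of Lemma~\ref{U(f)} for $\mathscr{P}_t(y)$: since $z \mapsto (t/(t+|z-y|))^{m\lambda/2}$ has gradient bounded by a constant times $t^{-1}(t/(t+|z-y|))^{m\lambda/2}$, there is some $\bar x$ on the segment $[x,x']$ with
$$
\mathscr{V}_{t,y}(x,x')^2 \lesssim \frac{|x-x'|^2}{t^2}\bigg(\frac{t}{t+|\bar x-y|}\bigg)^{m\lambda}.
$$

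Plugging this into the definition of $\mathscr{T}(\vec{f^r})(x)$ collapses the inner integral into $\mathscr{U}_t(\vec{f^r})(\bar x)^2$, giving
$$
\mathscr{T}(\vec{f^r})(x)^2 \lesssim \int_{c_0\ell(Q)}^{\infty} \frac{|x-x'|^2}{t^2}\,\mathscr{U}_t(\vec{f^r})(\bar x)^2\,\frac{dt}{t}.
$$
Then I would invoke \eqref{U-L} from Lemma~\ref{U(f)} (applied at the point $\bar x$) to obtain $\mathscr{U}_t(\vec{f^r})(\bar x) \lesssim \prod_{i=1}^{\kappa} \mathscr{L}_t(f_i^{r_i})(\bar x)$, reducing the problem to a pointwise estimate on $\mathscr{L}_t(f_i^{r_i})(\bar x)$.

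The core step is then showing $\mathscr{L}_t(f_i^{r_i})(\bar x) \lesssim M_\mu(f_i)(x)$ uniformly in $t\ge c_0\ell(Q)$. Since $|\bar x-x|\le \sqrt{n}\,\ell(Q)\le (\sqrt{n}/c_0)\,t$, the kernel $(t+|\bar x-z|)^{-(m+\alpha/4)}$ is comparable to $(t+|x-z|)^{-(m+\alpha/4)}$, and a standard dyadic annular decomposition, combined with $\mu(B(x,2^kt))\lesssim (2^kt)^m$ (power boundedness), gives the maximal function bound with a geometrically summable factor $2^{-k\alpha/4}$. The truncation of $f_i$ to $2Q$ or $(2Q)^c$ is irrelevant to this step because we only use $|f_i^{r_i}|\le|f_i|$.

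Finally I would pull the pointwise bound $\prod_i M_\mu(f_i)(x)$ out of the $t$-integral and observe
$$
\int_{c_0\ell(Q)}^{\infty}\frac{|x-x'|^2}{t^3}\,dt \;\lesssim\; \frac{|x-x'|^2}{\ell(Q)^2} \;\lesssim\; 1,
$$
since $x,x'\in Q$ forces $|x-x'|\le \sqrt{n}\,\ell(Q)$. This yields the claimed bound. The only delicate point is step~three—transferring $\mathscr{L}_t(f_i^{r_i})(\bar x)$ to $M_\mu(f_i)(x)$—and in particular verifying that constants can be kept independent of $Q$ and $t$; the hypothesis at least one $r_i=\infty$ plays no essential role in the estimate itself, it only reflects how this lemma will be invoked (to absorb the non-local pieces) in later sections.
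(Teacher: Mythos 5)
Your proof is correct, but it is genuinely different from (and shorter than) the paper's argument. The paper first applies Minkowski's inequality to pull the $z$-integrals outside, then splits the $y$-integration into the regions $E_1=\{|x-y|\le t\}$, $E_2$, $E_3$, uses the crude mean-value bound $\mathscr{V}_{t,y}(x,x')\lesssim |x-x'|/t$ only on $E_1$ and the refined bound \eqref{Mean} (which retains decay in $|x-y|$ but lowers the exponent using $\alpha\le m(\lambda-4)$) on $E_2\cup E_3$, and extracts convergence of the $t$-integral from the size condition together with the geometric fact that $z_1\notin 2Q$ while $x\in Q$; this forces a case analysis according to which $r_i=\infty$. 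You instead keep the full exponent $m\lambda/2$ in the gradient bound, so that $\mathscr{V}_{t,y}(x,x')^2\lesssim (|x-x'|^2/t^2)\,(t/(t+|x-y|))^{m\lambda}$ and the inner integral collapses to $(|x-x'|^2/t^2)\,\mathscr{U}_t(\vec{f^r})(x)^2$; then \eqref{U-L} and the uniform estimate $\mathscr{L}_t(f)\lesssim M_\mu(f)$ (dyadic annuli plus power boundedness, exactly the mechanism the paper itself uses for \eqref{Out} and \eqref{In}) finish the proof, with convergence of the $t$-integral coming solely from $\ell(Q)^2/t^2$. This treats all choices of $\vec{r}$ at once and, as you observe, shows the hypothesis that some $r_i=\infty$ is not actually needed. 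Note that both proofs ultimately rest on the same hypothesis $\alpha\le m(\lambda-2\kappa)$, since \eqref{U-L} already consumes it. Two points deserve one extra line in a careful write-up: (i) the point $\bar x$ produced by the mean value theorem depends on $(t,y)$, so before identifying the inner integral with $\mathscr{U}_t(\vec{f^r})(\bar x)^2$ you should replace $(t+|\bar x-y|)^{-1}$ by a constant multiple of $(t+|x-y|)^{-1}$, which is legitimate uniformly in $y$ because $|x-\bar x|\le\sqrt{n}\,\ell(Q)\le(\sqrt{n}/c_0)\,t$ on the range $t\ge c_0\ell(Q)$ (the paper commits the same abuse in the proof of \eqref{U-U-L}); (ii) the transfer from $\bar x$ to $x$ in the maximal function bound is the same comparability and is equally harmless. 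Neither is a gap.
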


\begin{proof}
By symmetry, it suffices to consider the following two cases :
$$
\hbox{Case 1.}\ \vec{f^r}=(f_1^{\infty},f_2^0),\ \ \hbox{Case 2.}\ \vec{f^r} = (f_1^{\infty},f_2^{\infty}).
$$
We will treat the above cases respectively.

%%%%%%%%%%%%%%%%%%%%%%%%%%%%%%%%%%%%%%%%%%%
\vspace{0.3cm}
\noindent\textbf{Case 1.}
By Minkowski's inequality, it yields that
{\small
\begin{equation}\label{T-f}
\mathscr{T}(\vec{f^r})(x)
\leq \int_{2Q} \int_{\Rn \setminus 2Q}  \bigg(\int_{c_0 \ell(Q)}^{\infty}
\int_{\Rn} \mathscr{V}_{t,y}(x,x')^2 |s_t(y,\vec{z})|^2 \frac{d\mu}{t^m} \frac{dt}{t}\bigg)^{\frac12}
\prod_{i=1}^2 |f_i(z_i)| d\mu(z_i).
\end{equation}
}
Set
\begin{eqnarray*}
E_{1} &:=& \big\{y \in \Rn;\ |x-y| \leq t \big\}, \\
E_{2} &:=& \big\{y \in \Rn;\ |x-y| > t, |x-y| \geq |x - z_1|/2 \big\}, \\
E_{3} &:=& \big\{y \in \Rn;\ |x-y| > t, |x-y| < |x - z_1|/2 \big\}, \\
\end{eqnarray*}
Then we obtain
$$
\mathscr{T}(\vec{f})(x)
\leq \mathscr{T}_1(\vec{f})(x) + \mathscr{T}_2(\vec{f})(x) +\mathscr{T}_3(\vec{f})(x),
$$
where $\mathscr{T}_j$ corresponds to the right hand side of $(\ref{T-f})$ with the innermost integral on $E_j$, $j=1,2,3$.
Applying the mean value theorem to the function $x \mapsto (t/{(t+|x-y|)})^{m \lambda/2}$, we get $\mathscr{V}_{t,y}(x,x') \lesssim |x-x'|/t$. Note that for any $y \in E_1$
$$
t+|y-z_1| \geq |x-y| + |y-z_1| \geq |x-z_1|.
$$
Then the size condition implies that
{\small 
\begin{align*}
\mathscr{T}_{1}(\vec{f})(x)
&\lesssim \int_{2Q} \int_{\Rn \setminus 2Q} \bigg(\int_{c_0 \ell(Q)}^{\infty} \int_{|x-y| \leq t} \frac{\ell(Q)^2}{t^2} \frac{t^{2 \alpha - 2m}}{|x-z_1|^{2 m + 2 \alpha}}
\frac{d\mu}{t^m} \frac{dt}{t}\bigg)^{\frac12} \prod_{i=1}^2 |f_i(z_i)| d\mu  \\
&\lesssim \bigg(\int_{c_0 \ell(Q)}^{\infty} \frac{\ell(Q)^2}{t^{2m+2-2\alpha}} \frac{dt}{t}\bigg)^{1/2} \int_{\Rn \setminus 2 Q} \frac{|f_1(z_1)| d\mu(z_1)}{|x-z_1|^{m + \alpha}} \int_{2Q} |f_2| d\mu\\
&\lesssim \ell(Q)^{-m+\alpha} \int_{\Rn \setminus 2 Q} \frac{|f_1(z_1)|}{|x-z_1|^{m + \alpha}} d\mu(z_1) \int_{2Q} |f_2(z_2)| d\mu(z_2) \\
&\lesssim \sum_{j=1}^{\infty} 2^{-j \alpha} \frac{1}{\mu(2^{j+1}Q)} \int_{2^{j+1} Q} |f_1(z_1)| d\mu(z_1) \cdot M_{\mu}(f_2)(x)\\
&\lesssim M_{\mu}(f_1)(x) M_{\mu}(f_2)(x).
\end{align*}
}

%$$
%\int_{\Rn} \frac{t^{\delta}}{(t+|y-z|)^{\gamma + \delta}} \frac{d\mu(y)}{t^m} \lesssim t^{-\gamma},\ \gamma + \delta > m.
%$$

Together with $|x-y| > t \gtrsim \ell(Q) \gtrsim |x-x'|$, the mean value theorem gives that
\begin{equation}\label{Mean}
\mathscr{V}_{t,y}(x,x')
\lesssim |x-x'| \frac{t^{m \lambda/2}}{(t+|x-y|)^{m \lambda/2 + 1}}
\lesssim \ell(Q) \frac{t^{2m + \alpha/2 - 1}}{(t+|x-y|)^{2m + \alpha/2}},
\end{equation}
where we have used $\alpha \leq m(\lambda - 4)$.

A simple calculation gives that
$$
\int_{\Rn} \prod_{i=1}^{\kappa} \frac{t^{2 \alpha}}{(t+|y-z_i|)^{2m + 2 \alpha}} \frac{d\mu(y)}{t^m}
\lesssim t^{-\kappa}.
$$
Thus, it yields that
\begin{align*}
\mathscr{T}_{2}(\vec{f})(x)
&\lesssim \int_{2Q} \int_{\Rn \setminus 2Q} \bigg(\ell(Q)^2 \int_{c_0 \ell(Q)}^{\infty} \frac{t^{4m + \alpha - 2}}{|x-z_1|^{4 m + \alpha}} \\
&\qquad\times \int_{\Rn} \prod_{i=1}^2 \frac{t^{2 \alpha}}{(t+|y-z_i|)^{2m + 2 \alpha}} \frac{d\mu(y)}{t^m} \frac{dt}{t}\bigg)^{1/2} \prod_{i=1}^2 |f_i(z_i)| d\mu(z_i)  \\
&\lesssim \bigg(\int_{c_0 \ell(Q)}^{\infty} \frac{\ell(Q)^2}{t^{2-\alpha}} \frac{dt}{t}\bigg)^{1/2} \int_{\Rn \setminus 2 Q} \frac{|f_1(z_1)|}{|x-z_1|^{2m + \alpha/2}} d\mu(z_1) \int_{2Q} |f_2| d\mu \\
&\lesssim \ell(Q)^{\alpha/2} \int_{\Rn \setminus 2 Q} \frac{|f_1(z_1)|}{|x-z_1|^{2m + \alpha/2}} d\mu(z_1) \int_{2Q} |f_2| d\mu\\
&\lesssim \sum_{j=1}^{\infty} 2^{-j \alpha/2} \prod_{i=1}^2 \frac{1}{\mu(2^{j+1}Q)} \int_{2^{j+1} Q} |f_i| d\mu 
\lesssim M_{\mu}(f_1)(x) M_{\mu}(f_2)(x).
\end{align*}
If $y \in E_3$, there holds that $|y-z_1| \geq |x-z_1| - |x-y| \geq |x-z_1|/2$. Making use of $(\ref{Mean})$ again, we have $\mathscr{T}_{3}(\vec{f})(x)$ can be controlled by  a constant times that
\begin{align*}
&\int_{2Q} \int_{\Rn \setminus 2Q} \bigg(\ell(Q)^2 \int_{c_0 \ell(Q)}^{\infty} \int_{\Rn} \frac{t^{4m + \alpha - 2}}{(t+|x-y|)^{4 m + \alpha}} 
 \\
&\qquad\qquad\times \frac{t^{2 \alpha - 2 m}}{|x-z_1|^{2m + 2 \alpha}} 
\frac{d\mu(y)}{t^m} \frac{dt}{t}\bigg)^{1/2} \prod_{i=1}^2 |f_i(z_i)| d\mu(z_i)  \\
&\lesssim \bigg(\int_{c_0 \ell(Q)}^{\infty} \frac{\ell(Q)^2}{t^{2m + 2 - 2\alpha}} \frac{dt}{t}\bigg)^{1/2}
\int_{\Rn \setminus 2 Q} \frac{|f_1(z_1)| d\mu(z_1)}{|x-z_1|^{m + \alpha}} \int_{2Q} |f_2| d\mu.
\end{align*}
The remaining arguments are the same as the term $\mathscr{T}_1$. Therefore, we deduce that
$$
\mathscr{T}_{3}(\vec{f})(x)
\lesssim M_{\mu}(f_1)(x) M_{\mu}(f_2)(x).
$$
%%%%%%%%%%%%%%%%%%%%%%%%%%%%%%%%%%%%%%%%%%%
\vspace{0.3cm}
\noindent\textbf{Case 2.}
This case can be discussed in the same manner as that of Case 1. The slight difference lies in that the domains are modified to be
\begin{eqnarray*}
E_{1} &:=& \big\{y \in \Rn;\ |x-y| \leq t \big\}, \\
E_{2} &:=& \big\{y \in \Rn;\ |x-y| > t, |x-y| \geq \min_{i=1,2}\{|x - z_i|/2\} \big\}, \\
E_{3} &:=& \big\{y \in \Rn;\ |x-y| > t, |x-y| < \min_{i=1,2}\{|x - z_i|/2\} \big\}.
\end{eqnarray*}
We omit the details here.
\end{proof}
%%%%%%%%%%%%%%%%%%%%%%%%%%%%%%%%%%%%%%%%%%%%%%%%%%%%%%%%%%%%%%%%%%%%%%%%%%%%%%%%%

%%%%%%%%%%%%%%%%%%%%%%%%%%%%%%%%%%%%%%%%%%%%%%%%%%%%%%%%%%%%%%%%%%%%%%%%%%%%%%%%%
\section{Bilinear Local $T1$ Theorem}\label{Sec-main}
This section aims to demonstrate how to deduce Theorem $\ref{Local T1}$ fromTheorems $\ref{L^p}$ and $\ref{Big piece}$.
%%%%%%%%%%%%%%%%%%%%%%%%%%%%%%%%%%%%%%%%%%%%%%%%%%%%%%%%%%%%%%%%
\subsection{Proof of the main theorem}
By Theorem $\ref{Big piece}$, one can get that for every $(2,\beta)$-doubling cube $Q \subset \Rn$ with $\mathfrak{C}$-small boundary satisfying assumptions in Theorem $\ref{Local
T1}$, there exists a subset $G_Q \subset Q$ such that $\mu(G_Q) \geq \frac{1-\delta_0}{2} \mu(Q)$ and
$$
\big\| \mathbf{1}_{G_Q} g_{\lambda,\mu}^*(f_1,f_2) \big\|_{L^q(\mu)}
\lesssim \big\|f_1\big\|_{L^{q_1}(\mu)} \big\|f_2\big\|_{L^{q_2}(\mu)}
$$
for each $f_i \in L^{q_i}(\mu)$ with $\supp(f_i) \subset Q$, and $1 < q, q_1, q_2 < \infty$ with $\frac1q = \frac1{q_1} + \frac1{q_2}$. Thus, there holds that
$$
g_{\lambda,\mu \lfloor G_Q}^* : L^{q_1}(\mu \lfloor G_Q) \times L^{q_2}(\mu \lfloor G_Q) \rightarrow L^{q}(\mu \lfloor G_Q).
$$
Thereby, from Proposition $\ref{M-M}$ below, it follows that
$$
g_{\lambda}^* : \mathfrak{M}(\Rn) \times \mathfrak{M}(\Rn) \rightarrow L^{\frac12,\infty}(\mu \lfloor G_Q).
$$
Finally, making use of Theorem $\ref{L^p}$, we deduce that
$$
g_{\lambda,\mu}^* : L^{p_1}(\mu) \times L^{p_2}(\mu) \rightarrow L^{p}(\mu),
$$
for all $1 < p_1,p_2 < \infty$ and $\frac12 < p < \infty$ satisfying $\frac{1}{p}=\frac{1}{p_1}+\frac{1}{p_2}$. 
\qed

%%%%%%%%%%%%%%%%%%%%%%%%%%%%%%%%%%%%%%%%%%%%%%%%%%%%%%%%%%%%%%%%%%%%%%%%%%%%%
\subsection{The Endpoint Bound}\label{Sec-M-M}
\begin{proposition}\label{M-M}
Let $\lambda > 4$, $0 < \alpha \leq m(\lambda-4)$ and $\mu$ be a power bound measure.
Let $1 < p,p_1,p_2 < \infty$ with $\frac1p = \frac{1}{p_1} + \frac{1}{p_2}$.
If $g_{\lambda,\mu}^*$ is bounded from $L^{p_1}(\mu) \times L^{p_2}(\mu)$ to $L^{p}(\mu)$, then
$$
g_{\lambda}^* : \mathfrak{M}(\Rn) \times \mathfrak{M}(\Rn) \rightarrow L^{\frac12,\infty}(\mu).
$$
\end{proposition}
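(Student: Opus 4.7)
The plan is a non-homogeneous bilinear Calder\'on--Zygmund decomposition, combined with the hypothesized strong $L^{p_1}\times L^{p_2}\to L^p$ bound and the kernel regularity from Definition \ref{Def}. After decomposing each complex measure into real/imaginary and positive/negative parts, I may assume $\nu_1,\nu_2\geq 0$ with $\|\nu_i\|=1$, so that the goal reduces to the distributional inequality $\mu\{x\in\R^n : g_\lambda^*(\nu_1,\nu_2)(x)>\tau\}\lesssim \tau^{-1/2}$ for every $\tau>0$.

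First I would apply a Tolsa-type non-homogeneous Calder\'on--Zygmund decomposition at level $\tau^{1/2}$ to each $\nu_i$, producing an essentially disjoint family of cubes $\{Q_k^{(i)}\}_k$ and a splitting $\nu_i = g_i\,d\mu + \sum_k b_k^{(i)}$ in which $\|g_i\|_{L^\infty(\mu)}\lesssim \tau^{1/2}$ and $\|g_i\|_{L^1(\mu)}\leq 1$, while each $b_k^{(i)}$ is supported in $Q_k^{(i)}$, has vanishing total mass $b_k^{(i)}(\R^n)=0$, and satisfies $\|b_k^{(i)}\|\lesssim \tau^{1/2}\mu(2Q_k^{(i)})$. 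The exceptional set $\Omega_i:=\bigcup_k 2Q_k^{(i)}$ obeys $\mu(\Omega_i)\lesssim \tau^{-1/2}$, so $\Omega:=\Omega_1\cup\Omega_2$ already contributes an admissible quantity to the distributional bound; it then suffices to analyze $g_\lambda^*(\nu_1,\nu_2)$ on $\R^n\setminus\Omega$.

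Next, using bilinearity of $\Theta_t$ and the triangle inequality for $g_\lambda^*$ (an $L^2$-norm in disguise), one has the pointwise domination
$$g_\lambda^*(\nu_1,\nu_2)\leq g_{\lambda,\mu}^*(g_1,g_2)+g_\lambda^*(g_1\,d\mu,b_2)+g_\lambda^*(b_1,g_2\,d\mu)+g_\lambda^*(b_1,b_2).$$
For the good--good term, the hypothesis $g_{\lambda,\mu}^*:L^{p_1}\times L^{p_2}\to L^p$ combined with the interpolation $\|g_i\|_{p_i}^{p_i}\leq \|g_i\|_\infty^{p_i-1}\|g_i\|_{L^1}\lesssim \tau^{(p_i-1)/2}$ and Chebyshev at level $\tau$ directly produces the desired bound of order $\tau^{-1/2}$. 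For the remaining three pieces I would argue pointwise on $\R^n\setminus\Omega$: exploiting the cancellation $b_k^{(i)}(\R^n)=0$, one subtracts the kernel $s_t$ at the center of $Q_k^{(i)}$ and invokes the H\"older estimate of Definition \ref{Def}, gaining a factor $\ell(Q_k^{(i)})^\alpha$ in each bad variable. Plugging these kernel bounds into the definition of $g_\lambda^*$, integrating against $(t/(t+|x-y|))^{m\lambda}t^{-m-1}d\mu(y)dt$, and summing over $k$ (and $k'$) reduces the bad contributions, after Chebyshev, to quantities of order $\tau^{-1/2}$.

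I expect the main obstacle to be the bad--bad piece $g_\lambda^*(b_1,b_2)$, where one must control a double sum of kernel contributions over pairs of CZ cubes of unrelated scales and positions while juggling the square-function structure. The precise assumption $\alpha\leq m(\lambda-4)$ is exactly what permits this control: it allows the Stein weight $(t/(t+|x-y|))^{m\lambda}$ to be split into two factors $(t/(t+|x-y|))^{2m+\alpha/2}$, each absorbing the H\"older gain in one bad variable, in the same spirit as the proof of Lemma \ref{T(f)}. Assembling the four contributions and combining with $\mu(\Omega)\lesssim \tau^{-1/2}$ yields the endpoint bound $\|g_\lambda^*(\nu_1,\nu_2)\|_{L^{1/2,\infty}(\mu)}\lesssim \|\nu_1\|\|\nu_2\|$, which is the assertion of the proposition.
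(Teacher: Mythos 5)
Your overall architecture coincides with the paper's: decompose each measure at height $\tau^{1/2}$ via a non-homogeneous Calder\'on--Zygmund decomposition, treat the good/good piece with the hypothesized strong bound plus $\|g_i\|_{L^{p_i}(\mu)}\lesssim\tau^{(1-1/p_i)/2}$ and Chebyshev (your exponent count $\tau^{-p}\cdot\tau^{p-1/2}=\tau^{-1/2}$ is exactly the paper's computation), and exploit cancellation of the bad pieces through the H\"older condition for the remaining three terms. The good/good part is correct as written.

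The gap is in the decomposition itself and, consequently, in the whole treatment of the bad pieces. For a general power bounded (non-doubling) $\mu$ you cannot simultaneously have $b_k^{(i)}$ supported in $Q_k^{(i)}$ with vanishing total mass \emph{and} $\|g_i\|_{L^\infty(\mu)}\lesssim\tau^{1/2}$: a mean-zero correction confined to $Q_k$ forces the good part to contain the constant $\nu(Q_k)/\mu(Q_k)$ on $Q_k$, and the stopping inequalities only control $|\nu|(\eta Q_k)$ by $\mu(2\eta Q_k)$, so this constant is unbounded when $\mu(Q_k)\ll\mu(2Q_k)$. Tolsa's decomposition (Lemma \ref{C-Z decomposition}) resolves this by smearing the correction $\varphi^i_k$ over a much larger $(6,6^{m+1})$-doubling cube $R^i_k\supset Q^i_k$, so the bad piece $\beta^i_k=w^i_k\nu-\varphi^i_k\mu$ is supported in $R^i_k$ and has cancellation only at scale $\ell(R^i_k)$. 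This changes the analysis in ways your sketch does not address: first, $\mu(R^i_k)$ is not controlled by $\mu(2Q^i_k)$, so the sets $4R^i_k$ cannot be absorbed into the exceptional set, and one must prove pointwise decay of $g_\lambda^*$ applied to bad pieces off $4R^i_k$ (Lemmas \ref{Pointwise-beta} and \ref{Pointwise-w}, splitting the $t$-integration at $\ell(R^i_k)$ and at $|x-c_{R^i_k}|$); second, on the intermediate annuli $4R^i_k\setminus 2Q^i_k$ the piece $w^i_k\nu$ carries no cancellation gain, and integrability there rests on the genuinely non-homogeneous estimate $\int_{4R_i\setminus 2Q_i}|x-c_{Q_i}|^{-m}\,d\mu\lesssim 1$, proved via the chain of non-$(6,6^{m+1})$-doubling cubes between $6Q_i$ and $R_i$ as in $(\ref{R-Q})$; third, the bad/bad double sum must be organized by the relative sizes and positions of $R_1^i$ and $R_2^j$ (the index sets $\Lambda_i$ and the dichotomy $4R_1^i\cap 4R_2^j=\emptyset$ or not), and the overlapping case invokes the strong $L^{p_1}\times L^{p_2}\to L^p$ hypothesis a second time rather than kernel estimates alone. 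As stated, your decomposition would only be available for doubling $\mu$, and the steps listed above are precisely the substance of the proposition in the non-doubling setting.
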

The key of the proof lies in the Calder\'{o}n-Zygmund decomposition of a measure, which was given in \cite{T-3}.
%%%%%%%%%%%%%%%%%%%%%%%%%%%%%%%%%%%%%%%%%%%%%%%%%%%%
\begin{lemma}\label{C-Z decomposition}
Let $\mu$ be a Radon measure on $\Rn$. For any $\nu \in \mathfrak{M}(\Rn)$ with compact support and any
$\xi > 2^{n+1} ||\nu||/||\mu||$, we have:
\begin{enumerate}
\item [(a)] There exists a family of almost disjoint cubes $\{Q_i\}_i$ and a function $f \in L^1(\mu)$ such that
\begin{eqnarray}
\label{C-Z-1}|\nu|(Q_i) & > & \frac{\xi}{2^{n+1}} \mu(2Q_i);\\
\label{C-Z-2}|\nu|(\eta Q_i) & \leq & \frac{\xi}{2^{n+1}} \mu(2\eta Q_i), \ for \ any \ \eta > 2;\\
\label{C-Z-3}\nu = f \mu \ \ & {}& in \ \Rn \setminus \bigcup_i Q_i,\ \ with \ |f| \leq  \xi \ \ \mu-a.e.;
\end{eqnarray}
\item [(b)] For each $i$, let $R_i$ be a $(6, 6^{m+1})$-doubling cube concentric with $Q_i$, with $\ell(R_i) > 4\ell(Q_i)$ and denote $w_i = \mathbf{1}_{Q_i} \big/
    {\sum_{k}\mathbf{1}_{Q_k}}$. Then, there exists a family of functions $\varphi_i$ with $supp(\varphi_i) \subset R_i$, and each $\varphi_i$ with constant sign satisfying
\begin{eqnarray}
\label{C-Z-4}\int_{R_i} \varphi_i \ d\mu & = & \int_{Q_i} f w_i \ d\mu;\\
\label{C-Z-5}\sum_{i} |\varphi_i| & \lesssim & \xi;\\
\label{C-Z-6}\mu(R_i) ||\varphi_i||_{L^\infty(\mu)} & \lesssim & |\nu|(Q_i).
\end{eqnarray}
\end{enumerate}
\end{lemma}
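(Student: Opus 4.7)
The plan is to carry out a stopping-time selection at level $\xi$ adapted to the non-doubling measure $\mu$, extract an almost disjoint subfamily via a Besicovitch–Vitali covering, and then construct each $\varphi_i$ by solving a one-dimensional balancing problem on a suitable doubling enlargement $R_i$ of $Q_i$. Since the statement is due to Tolsa \cite{T-3}, the argument is classical once the ingredients are assembled in the right order.

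For part (a), I would fix $x\in\Rn$ and consider the family $\mathcal{Q}_x$ of cubes $Q\ni x$ with $|\nu|(Q) > 2^{-(n+1)}\xi\,\mu(2Q)$. The assumption $\xi > 2^{n+1}\|\nu\|/\|\mu\|$ forces a uniform upper bound on the side length of cubes in $\mathcal{Q}_x$, since for very large $Q$ one has $|\nu|(Q)\le\|\nu\|$ while $\mu(2Q)$ approaches $\|\mu\|$. This upper bound lets me pick a maximal (or nearly maximal) $Q(x)\in\mathcal{Q}_x$ whenever $\mathcal{Q}_x\ne\emptyset$, and from the collection $\{Q(x)\}$ I extract an almost disjoint subfamily $\{Q_i\}$ by Besicovitch's covering theorem in $\Rn$. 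Property \eqref{C-Z-1} holds by construction, and \eqref{C-Z-2} follows from maximality: for $\eta>2$ the cube $\eta Q_i$ strictly contains $Q_i$ and therefore cannot belong to $\mathcal{Q}$. On the complement $\Rn\setminus\bigcup_i Q_i$, for $\mu$-a.e.\ $x$ one has $|\nu|(Q)/\mu(2Q)\le\xi/2^{n+1}$ for every cube $Q\ni x$; a Lebesgue-type differentiation theorem (applicable to any Radon $\mu$ once the denominator uses $\mu(2Q)$ rather than $\mu(Q)$) then identifies $\nu$ with $f\mu$ on this set, with $|f|\le\xi$ a.e., giving \eqref{C-Z-3}.

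For part (b), the first point is to produce the doubling enlargement $R_i$. Here I exploit the power bound $\mu(B(x,r))\lesssim r^m$: if none of $6Q_i, 6^2Q_i, 6^3 Q_i, \ldots$ were $(6,6^{m+1})$-doubling, then $\mu(6^k Q_i)$ would grow strictly faster than $(6^k\ell(Q_i))^m$, contradicting the upper bound for large $k$. I pick the first doubling cube in this chain as $R_i$. Given $R_i$, I build $\varphi_i$ as a constant-sign function on $R_i$ (for instance, a multiple of the indicator of a suitable subset of $R_i$ where $\mu$ lives) with $\int_{R_i}\varphi_i\,d\mu=\int_{Q_i}fw_i\,d\mu$ and $\|\varphi_i\|_{L^\infty(\mu)}\lesssim |\nu|(Q_i)/\mu(R_i)$; this is possible because $\bigl|\int_{Q_i}fw_i\,d\mu\bigr|\le |\nu|(Q_i)+\xi\mu(Q_i)\lesssim |\nu|(Q_i)$ by \eqref{C-Z-1} and $\mu(R_i)$ is nontrivial since $R_i\supsetneq 4Q_i$. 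Then \eqref{C-Z-4} and \eqref{C-Z-6} are automatic.

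The main obstacle is \eqref{C-Z-5}, the bounded overlap of the $|\varphi_i|$'s, since the $R_i$ can be much larger than the $Q_i$ and overlap heavily. The way around this is to bound $\sum_i|\varphi_i|(x)$ by splitting the sum according to whether $Q_i\subset B(x,r)$ or not for appropriate $r$, and using \eqref{C-Z-1} together with the $(6,6^{m+1})$-doubling of each $R_i$ and the power bound on $\mu$ to compare $\sum_i|\nu|(Q_i)/\mu(R_i)$ against a single geometric series in $\xi$. This is exactly where the constant $6^{m+1}$ is calibrated, and it is the step that must be executed with care; the remaining verifications are routine, and one may otherwise cite \cite{T-3} directly.
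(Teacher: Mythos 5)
Your part (a) is essentially Tolsa's argument and is fine as a sketch: near-maximal cubes centered at the points of the high-density set, a Besicovitch extraction, and a differentiation argument off the selected cubes (with the usual caveats that Besicovitch requires the cubes to be centered at $x$, not merely containing $x$, and that near-maximality is exactly why \eqref{C-Z-2} is only claimed for $\eta>2$). Note also that within the lemma $\mu$ is just a Radon measure and the cubes $R_i$ are \emph{given} in the statement, so the power bound $\mu(B(x,r))\lesssim r^m$ you invoke to manufacture $R_i$ is neither available nor needed here; in the paper it only enters later, to guarantee that such doubling cubes exist. The paper itself gives no proof of this lemma (it is quoted from \cite{T-3}), so the comparison is with Tolsa's proof.

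The genuine gap is \eqref{C-Z-5}. You propose to choose each $\varphi_i$ \emph{independently} as a constant-sign bump on $R_i$ with $\|\varphi_i\|_{L^\infty(\mu)}\lesssim |\nu|(Q_i)/\mu(R_i)$ and then to sum $\sum_i |\nu|(Q_i)/\mu(R_i)$ ``as a geometric series'' using \eqref{C-Z-1}, the doubling of $R_i$ and the power bound. This does not work as stated. First, \eqref{C-Z-1} is a \emph{lower} bound on $|\nu|(Q_i)$ and points the wrong way; the usable upper bound comes from \eqref{C-Z-2} with $\eta Q_i$ comparable to $R_i$, combined with the $(6,6^{m+1})$-doubling of $R_i$, and it yields only $|\nu|(Q_i)/\mu(R_i)\lesssim\xi$ \emph{per cube}. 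Second, a fixed point $x$ can lie in infinitely many of the cubes $R_i$ — the $Q_i$ are almost disjoint, but the $R_i$ need not be, and there may be one at essentially every scale — and for a non-doubling $\mu$ there is no decay across scales: each scale can contribute a full $\sim\xi$, so no geometric series appears and the simultaneous choice of constants can violate $\sum_i|\varphi_i|\lesssim\xi$. Tolsa's proof supplies the missing idea by building the $\varphi_i$ \emph{inductively}: order the cubes so that $\ell(R_i)$ is non-decreasing, set $\varphi_k=\alpha_k\mathbf{1}_{A_k}$ with $A_k:=R_k\cap\{\sum_{i<k}|\varphi_i|\le B\xi\}$, and prove $\mu(A_k)\ge\tfrac12\mu(R_k)$ from $\sum_{i<k}\int_{R_k}|\varphi_i|\,d\mu\le\sum_{i<k:\,R_i\cap R_k\ne\emptyset}|\nu|(Q_i)\lesssim|\nu|(3R_k)\lesssim\xi\,\mu(6R_k)\lesssim\xi\,\mu(R_k)$, using the almost-disjointness of the $Q_i$, \eqref{C-Z-2} and the doubling of $R_k$; then $\alpha_k=\big(\int_{Q_k} w_k\,d\nu\big)/\mu(A_k)$ gives $|\alpha_k|\lesssim\xi$, \eqref{C-Z-4} and \eqref{C-Z-6}, and the cap $B\xi$ propagates to give \eqref{C-Z-5}. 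Without this truncation-of-supports construction, the bounded-overlap estimate \eqref{C-Z-5}, which is the heart of part (b), is not established.
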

\qed

For simplicity we may assume that $||\nu_j||=1$, $\nu_j$ has compact support for each $j$, and $\xi^{1/2} > 2^{n+1}/ ||\mu||$. Applying Lemma $\ref{C-Z decomposition}$ to the measure
$\nu_j$ at the level $\xi^{1/2}$, we have the decomposition: $\nu_j=g_j \mu + \beta_j$ with 
\begin{equation}\label{nu-g-beta}
g_j \mu = \mathbf{1}_{\Rn \setminus \bigcup Q^i_{j}} \nu + \sum_i \varphi^i_j \mu \quad\text{and}\quad 
\beta_j = \sum_i \beta^i_j := \sum_i ( w^i_j \nu - \varphi^i_j \mu),
\end{equation}
where $\{Q^i_{j}\}_i$ and $\{R^i_{j}\}_i$ are form of those in Lemma $\ref{C-Z decomposition}$. Then it is easy to get
\begin{equation}\label{g-norm}
||g_j||_{L^{\infty}(\mu)} \lesssim \xi^{1/2},\ \ ||g_j||_{L^1(\mu)} \lesssim 1, \ \text{and } ||g_j||_{L^{s}(\mu)} \lesssim \xi^{(1-1/{s})/2},\ s > 1.
\end{equation}
We write
\begin{align*}
\mathcal{I}_{11} &= \mu\big(\{x \in \Rn; g_{\lambda,\mu}^* (g_1,g_2)(x) > \xi/4 \}\big),\\ 
\mathcal{I}_{21} &= \mu\big(\{x \in \Rn; g_\lambda^* (\beta_1,g_2 \mu)(x) > \xi/4 \}\big),\\
\mathcal{I}_{12} &= \mu\big(\{x \in \Rn; g_\lambda^* (g_1 \mu,\beta_2)(x) > \xi/4 \}\big),\\ 
\mathcal{I}_{22} &= \mu\big(\{x \in \Rn; g_\lambda^* (\beta_1,\beta_2)(x) > \xi/4 \}\big),
\end{align*}
which yields that
\begin{align*}
\mu\big(\{x \in \Rn; g_\lambda^* (\nu_1,\nu_2)(x) > \xi \}\big)
\leq  \mathcal{I}_{11} + \mathcal{I}_{21} + \mathcal{I}_{12} + \mathcal{I}_{22}.
\end{align*}
We will consider the above four terms consecutively.
%%%%%%%%%%%%%%%%%%%%%%%%%%%%%%%%%%%%%%%%%%%%%%%%%%%%%%%%%%%%%
\subsubsection{\bf{Good/Good part.}}
By Chebychev's inequality and $(\ref{g-norm})$, we have
$$
\mathcal{I}_{11}
\lesssim \xi^{-p} \big\|g_{\lambda,\mu}^* (g_1,g_2)\big\|_{L^p(\mu)}^p
\lesssim \xi^{-p} \prod_{j=1}^2 \big\|g_j\big\|_{L^{p_j}(\mu)}^p
\lesssim \xi^{-1/2}.
$$
%%%%%%%%%%%%%%%%%%%%%%%%%%%%%%%%%%%%%%%%%%%%%%%%%%%%%%%%%%%%%
\subsubsection{\bf{Bad/Good and Good/Bad parts.}}
Together, the inequality $(\ref{C-Z-1})$ and the decomposition $(\ref{nu-g-beta})$ establish
\begin{align*}
\mathcal{I}_{21}
&\leq \mu\Big(\bigcup_{i}2Q_1^i\Big) + 4 \xi^{-1} \int_{\Rn \setminus \bigcup_{i}2 Q_1^i} g_\lambda^*(\beta_1,g_2 \mu)(x) d\mu(x) \\
&\lesssim \xi^{-1/2} ||\nu_1|| + \xi^{-1} \sum_i \int_{\Rn \setminus 4 R_1^i} g_\lambda^*(\beta_1^i,g_2 \mu)(x) d\mu(x) \\
&\quad + \xi^{-1} \sum_i \int_{4R_1^i \setminus 2Q_1^i} g_\lambda^*(\omega_1^i \nu_1,g_2 \mu)(x) d\mu(x) \\
&\quad    + \xi^{-1} \sum_i \int_{4R_1^i \setminus 2 Q_1^i} g_{\lambda,\mu}^*(\varphi_1^i,g_2)(x) d\mu(x).
\end{align*}
Consequently, to get the weak type bound, it suffices to conclude that for each $i$ there holds that
\begin{eqnarray}
\label{bad-good-1}\mathcal{H}_1:=\int_{\Rn \setminus 4 R_1^i} g_\lambda^*(\beta_1^i,g_2 \mu)(x) d\mu(x)
& \lesssim & \xi^{1/2} |\nu_1|(Q_1^i), \\
\label{bad-good-2}\mathcal{H}_2:=\int_{4R_1^i \setminus 2Q_1^i} g_\lambda^*(\omega_1^i \nu_1,g_2 \mu)(x) d\mu(x)
& \lesssim & \xi^{1/2} |\nu_1|(Q_1^i),\\
\label{bad-good-3}\mathcal{H}_3:=\int_{4R_1^i \setminus 2 Q_1^i} g_{\lambda,\mu}^*(\varphi_1^i,g_2 \mathbf{1}_{6R_1^i})(x) d\mu(x)
& \lesssim & \xi^{1/2} |\nu_1|(Q_1^i),\\
\label{bad-good-4}\mathcal{H}_4:=\int_{4R_1^i \setminus 2 Q_1^i} g_{\lambda,\mu}^*(\varphi_1^i,g_2 \mathbf{1}_{\Rn \setminus 6R_1^i})(x) d\mu(x)
& \lesssim & \xi^{1/2} |\nu_1|(Q_1^i)
\end{eqnarray}
%%%%%%%%%%%%%%%%%%%%%%%%%%%%%%%%%%%%%%%%%%%%%%%%%%%%%%%%%

First, the inequality $(\ref{bad-good-1})$ follows from Lemma $\ref{Pointwise-beta}$ below.
Combining H\"{o}lder's inequality with $(\ref{g-norm})$ and $(\ref{C-Z-6})$, it yields that
\begin{align*}
\mathcal{H}_3
&\leq \mu(4R_1^i)^{1-1/p} \big\| \mathbf{1}_{4R_1^i} g_{\lambda,\mu}^*(\varphi_1^i,g_2 \mathbf{1}_{6R_1^i}) \big\|_{L^p(\mu)} \\
& \lesssim \mu(4R_1^i)^{1-1/p} \big\|\varphi_1^i\big\|_{L^{\infty}(\mu)} \mu(4R_1^i)^{1/{p_1}}  \big\|g_2\big\|_{L^{\infty}(\mu)} \mu(4R_1^i)^{1/{p_2}} \\
& \lesssim \xi^{1/2} \mu(R_1^i) \big\|\varphi_1^i\big\|_{L^{\infty}(\mu)}
\lesssim  \xi^{1/2} |\nu_1|(Q_1^i).
\end{align*}
This shows the inequality $(\ref{bad-good-3})$
%%%%%%%%%%%%%%%%%%%%%%%%%%%%%%%%%%%%%%%%%%%%%%%%%%%%%%%%%

Secondly, to gain the inequality $(\ref{bad-good-4})$, we treat the contribution of the kernel. The size condition gives that
\begin{align*}
\big| \Theta_t^{\mu}(\varphi_1^i,g_2 \mathbf{1}_{\Rn \setminus 6R_1^i})(y) \big|
\lesssim t^{-m} \mu(R_1^i) \|\varphi_1^i\|_{L^{\infty}(\mu)}
\int_{\Rn \setminus 6 R_1^i} \frac{t^\alpha  \|g_2\|_{L^{\infty}(\mu)} d\mu(z_2)}{(t+|y-z_2|)^{m+\alpha}}.
\end{align*}
It follows from $(\ref{U(f)})$ that for every $x \in 4R_1^i$
\begin{align*}
&\bigg(\int_{\Rn} \Big(\frac{t}{t+|x-y|}\Big)^{m \lambda} \big| \theta_t^{\mu}(\varphi_1^i,g_2 \mathbf{1}_{\Rn \setminus 6R_1^i})(y) \big|^2 \frac{d\mu(y)}{t^m} \bigg)^{1/2} \\
&\lesssim \big\|\varphi_1^i\big\|_{L^{\infty}(\mu)} \big\| g_2 \big\|_{L^{\infty}(\mu)} \min\Big\{t^{\alpha} \ell(R_1^i)^{-\alpha}, t^{-m} \mu(R_1^i) \Big\},
\end{align*}
which indicates that
\begin{equation}\label{Rn-6R}
g_{\lambda,\mu}^*(\varphi_1^i,g_2 \mathbf{1}_{\Rn \setminus 6R_1^i})(x)
\lesssim \xi^{1/2} \big\|\varphi_1^i\big\|_{L^{\infty}(\mu)}.
\end{equation}
Therefore, it holds that
$$
\mathcal{H}_4
\lesssim \xi^{1/2} \mu(R_1^i) \big\|\varphi_1^i\big\|_{L^{\infty}(\mu)}
\lesssim  \xi^{1/2} |\nu_1|(Q_1^i).
$$
%%%%%%%%%%%%%%%%%%%%%%%%%%%%%%%%%%%%%%%%%%%%%%%%%%%%%%%%%

Finally, we prove the inequality ''''$(\ref{bad-good-2})$, it is sufficient to show the following estimate
\begin{equation}\label{bad-4}
g_\lambda^*(w_1^i \nu_1,g_2 \mu)(x)
\lesssim \xi^{1/2} \frac{|\nu_1|(Q_1^i)}{|x-c_{Q_1^i}|^m} , \ x \in 4R_1^i \setminus 2Q_1^i.
\end{equation}
Actually,
\begin{equation}\label{R-Q}
\int_{4R_i \setminus 2Q_i} \frac{d\mu(x)}{|x-c_{Q_i}|^m}
\leq \bigg(\int_{4R_i \setminus R_i} +  \int_{R_i \setminus 6Q_i} + \int_{6 Q_i \setminus Q_i}\bigg) \frac{d\mu(x)}{|x-c_{Q_i}|^m}.
\end{equation}
It is easy to see that
$$\bigg(\int_{4R_i \setminus R_i} + \int_{6 Q_i \setminus Q_i}\bigg) \frac{d\mu(x)}{|x-c_{Q_i}|^m}
\lesssim \frac{\mu(4R_i)}{\ell(R_i)^m} + \frac{\mu(6Q_i)}{\ell(Q_i)^m} \lesssim 1.$$
Moreover, there are no $(6,6^{m+1})$-doubling cubes of the form $6^kQ_i$ such that $6Q_i \subsetneq 6^k Q_i \subsetneq R_i$.
Let $ N_i := \min\{ k; R_i \subset 6^k \cdot 6Q_i \}$.
Hence, $$ \mu(6 \cdot 6^k Q_i) > 6^{m+1} \mu(6^k Q_i), \ k=1,\ldots,N_i.$$
and hence,
$$ \mu(6^{N_i}\cdot6Q_i) > 6^{(m+1)(N_i-k)} \mu(6^k Q_i).$$
Therefore,
\begin{align*}
\int_{R_i \setminus 6Q_i} \frac{d\mu(x)}{|x-c_{Q_i}|^m}
&\leq \sum_{k=1}^{N_i} \int_{6^{k+1}Q_i \setminus 6^k Q_i} \frac{d\mu(x)}{|x-c_{Q_i}|^m}
\\%%%%%%%%%%%
&\lesssim \sum_{k=1}^{N_i} \frac{\mu(6^{k+1}Q_i)}{\ell(6^k Q_i)^m}
\lesssim  \sum_{k=1}^{N_i} 6^{k-N_i} \frac{\mu(6^{N_i+1}Q_i)}{\ell(6^{N_i+1} Q_i)^m}
\lesssim 1.
\end{align*}

Now let us show $(\ref{bad-4})$. The size condition implies that
\begin{align*}
\Theta_t(w_1^i \nu_1,g_2 \mu)(y)
&\lesssim \int_{Q_1^i} \frac{t^{\alpha} d|\nu_1|(z_1)}{(t+|y-z_1|)^{m+\alpha}}  
\int_{\Rn} \frac{t^{\alpha} ||g_2||_{L^{\infty}(\mu)} d\mu(z_2)}{(t+|y-z_2|)^{m+\alpha}}   
\\%%%%%%%%%%%%%%
&\lesssim \xi^{1/2} \int_{Q_1^i} \frac{t^{\alpha}}{(t+|y-z|)^{m+\alpha}} d|\nu_1|(z).
\end{align*}

Together with Lemma $\ref{U(f)}$, this gives that for $x \in 4R_1^i \setminus 2Q_1^i$
\begin{align*}
&\bigg(\int_{\Rn} \Big(\frac{t}{t+|x-y|}\Big)^{m\lambda} |\Theta_t (w_1^i \nu_1,g_2 \mu)(y)|^2  \frac{d\mu(y)}{t^m}\bigg)^{1/2} \\
&\lesssim  \xi^{1/2} \int_{Q_1^i} \frac{t^{\alpha}}{(t+|x-z|)^{m+\alpha}} d|\nu_1|(z)
\\%%%%%%%%%%%%
&\lesssim  \xi^{1/2}  \frac{t^{\alpha}}{(t+|x-c_{Q_1^i}|)^{m+\alpha}} |\nu_1|(Q_1^i).
\end{align*}
Therefore, the desired result can be obtained
\begin{align*}
g_{\lambda}^*(w_1^i \nu_1,g_2 \mu)(x)
&\lesssim \xi^{1/2} |\nu_1|(Q_1^i) \bigg(\int_{|x-c_{Q_1^i}|}^{\infty} \frac{1}{t^{2 m}} \frac{dt}{t}\bigg)^{1/2} \\
&\quad +  \xi^{1/2} |\nu_1|(Q_1^i) \bigg(\int_{0}^{|x-c_{Q_1^i}|} \frac{t^{2 \alpha}}{|x-c_{Q_1^i}|^{2(m+\alpha)}} \frac{dt}{t}\bigg)^{1/2}\\
&\lesssim \xi^{1/2} \frac{|\nu_1|(Q_1^i)}{|x-c_{Q_1^i}|^m}.
\end{align*}
This completes the proof. 
\qed
%%%%%%%%%%%%%%%%%%%%%%%%%%%%%%%%%%%%%%%%%%%%%%%%%%%%%%%%%%%%%
\subsubsection{\bf{Bad/Bad part.}}
Write $\Q:=\bigcup_{i,j} Q_j^i$. Since $\mu(\Q) \lesssim \xi^{-1/2}$, it suffices to bound
$$
\mu\big(\{x \in \Rn \setminus \Q; g_\lambda^* (\beta_1,\beta_2)(x) > \xi \}\big) \lesssim \xi^{-1/2}.
$$
By symmetry and sub-linearity, it is enough to show separately
\begin{eqnarray}
\label{K-1}\mathcal{K}_1
&:=& \mu\Big(\Big\{x \in \Rn \setminus \Q; \sum_{i} \sum_{j \in \Lambda_i} \mathbf{1}_{\Rn \setminus 4 R_1^i} g_\lambda^* (\beta_1^i,\beta_2^j)(x) > \xi \Big\}\Big)
\lesssim \xi^{-1/2}, \\
\label{K-2}\mathcal{K}_2
&:=& \mu\Big(\Big\{x \in \Rn \setminus \Q; \sum_{i} \mathbf{1}_{4 R_1^i} g_\lambda^* \Big(\beta_1^i,\sum_{j \in \Lambda_i} \beta_2^j\Big)(x) > \xi \Big\}\Big)
\lesssim \xi^{-1/2},
\end{eqnarray}
where $\Lambda_i = \big\{j;\ \ell(R_1^i) \leq \ell(R_2^j) \big\}$.

%%%%%%%%%%%%%%%%%%%%%%%%%%%%%%%%%%%%%%%%%%%%%%%%%%%%%%%%%
\vspace{0.3cm}
\noindent\textbf{$\bullet$ Case 1.}
By Chebychev's inequality, we have
\begin{align*}
\mathcal{K}_1
&\leq \xi^{-1/2} \int_{\Rn \setminus \Q} \Big(\sum_{i} \sum_{j \in \Lambda_i} \mathbf{1}_{\Rn \setminus (4 R_1^i \cup 4 R_2^j)}(x)
g_\lambda^* (\beta_1^i,\beta_2^j)(x)\Big)^{-1/2} d\mu(x) \\
&\quad + \xi^{-1/2} \int_{\Rn \setminus \Q} \Big(\sum_{i} \sum_{j} \mathbf{1}_{4 R_2^j \setminus 4 R_1^i}(x)
g_\lambda^* (\beta_1^i,\varphi_2^j \mu)(x)\Big)^{-\frac12} d\mu(x)\\
&\quad + \xi^{-1/2} \int_{\Rn \setminus \Q} \Big(\sum_{i} \sum_{j} \mathbf{1}_{4 R_2^j \setminus 4 R_1^i}(x)
g_\lambda^* (\beta_1^i,w_2^j \nu_2)(x)\Big)^{-\frac12} d\mu(x)\\
&:= \mathcal{K}_{11} + \mathcal{K}_{12} + \mathcal{K}_{13}.
\end{align*}
It follows from the pointwise control $(\ref{point-b-b})$ that
\begin{align*}
\mathcal{K}_{11}
&\lesssim \xi^{-1/2} \int_{\Rn} \bigg(\sum_{i, j} \mathbf{1}_{\Rn \setminus (4 R_1^i \cup 4 R_2^j)}
          \frac{\ell(R_1^i)^{\alpha/4} |\nu_1|(Q_1^i)}{|x-c_{R_1^i}|^{m+\alpha/4}}
          \frac{\ell(R_2^j)^{\alpha/4} |\nu_2|(Q_2^j)}{|x-c_{R_2^j}|^{m+\alpha/4}} \bigg)^{\frac12} d\mu\\
&\lesssim \xi^{-1/2} \bigg(\sum_{i} |\nu_1|(Q_1^i) \int_{\Rn \setminus 4 R_1^i} \frac{\ell(R_1^i)^{\alpha/4}}{|x-c_{R_1^i}|^{m+\alpha/4}} d\mu(x) \bigg)^{\frac12} \\
&\quad\quad\times     \bigg(\sum_{j} |\nu_2|(Q_2^j) \int_{\Rn \setminus 4 R_2^j} \frac{\ell(R_2^j)^{\alpha/4}}{|x-c_{R_2^j}|^{m+\alpha/4}} d\mu(x) \bigg)^{\frac12}  \\
&\lesssim \xi^{-1/2} \bigg(\sum_{i} |\nu_1|(Q_1^i) \bigg)^{1/2} \bigg(\sum_{j} |\nu_2|(Q_2^j) \bigg)^{\frac12}
\leq \xi^{-1/2}.
\end{align*}
The second term can be bounded as follow. Applying $(\ref{point-b-var})$, $(\ref{C-Z-6})$ and the doubling property of $R_2^j$, we deduce that
\begin{align*}
\mathcal{K}_{12}
&\lesssim \xi^{-1/2} \int_{\Rn} \bigg(\sum_{i,j } \mathbf{1}_{4 R_2^j \setminus 4 R_1^i}(x)
          \frac{\ell(R_1^i)^{\alpha/4} |\nu_1|(Q_1^i)}{|x-c_{R_1^i}|^{m+\alpha/4}} \big\| \varphi_2^j \big\|_{L^{\infty}} \bigg)^{\frac12} d\mu(x) \\
&\lesssim \xi^{-1/2} \bigg(\sum_{i} |\nu_1|(Q_1^i) \int_{\Rn \setminus 4 R_1^i} \frac{\ell(R_1^i)^{\alpha/4} d\mu}{|x-c_{R_1^i}|^{m+\alpha/4}} \bigg)^{\frac12}
 \bigg(\sum_{j} \mu(R_2^j) \big\| \varphi_2^j \big\|_{L^{\infty}} \bigg)^{\frac12}  \\
&\lesssim \xi^{-1/2}.
\end{align*}
By $(\ref{point-b-w})$ and $(\ref{R-Q})$, it yields that
\begin{align*}
\mathcal{K}_{13}
&\lesssim \xi^{-1/2} \int_{\Rn \setminus \mathcal{Q}} \bigg(\sum_{i} \sum_{j} \mathbf{1}_{4 R_2^j \setminus 4 R_1^i}(x)
          \frac{\ell(R_1^i)^{\alpha/2} |\nu_1|(Q_1^i)}{|x-c_{R_1^i}|^{m+\alpha/2}}
          \frac{ |\nu_2|(Q_2^j)}{|x-c_{R_2^j}|^{m}} \bigg)^{\frac12} d\mu \\
&\lesssim \xi^{-1/2} \bigg(\sum_{i} |\nu_1|(Q_1^i) \int_{\Rn \setminus 4 R_1^i} \frac{\ell(R_1^i)^{\alpha/2}}{|x-c_{R_1^i}|^{m+\alpha/2}} d\mu(x) \bigg)^{\frac12} \\
&\quad\quad\times     \bigg(\sum_{j} |\nu_2|(Q_2^j) \int_{4 R_2^j \setminus 2 Q_2^j} \frac{d\mu(x)}{|x-c_{R_2^j}|^{m}}  \bigg)^{\frac12}  \\
&\lesssim  \xi^{-1/2}.
\end{align*}
This shows $(\ref{K-1})$.
\qed

%%%%%%%%%%%%%%%%%%%%%%%%%%%%%%%%%%%%%%%%%%%%%%%%%%%%%%%%%
\vspace{0.3cm}
\noindent\textbf{$\bullet$ Case 2.}
The decompositions of $\beta_1^i$ and $\beta_2^j$ indicate that
$$
\mathcal{K}_2
\leq \mathcal{K}_{21} + \mathcal{K}_{22} + \mathcal{K}_{22} + \mathcal{K}_{23} + \mathcal{K}_{24} + \mathcal{K}_{25},
$$
where
\begin{eqnarray*}
\mathcal{K}_{21}
&:=& \xi^{-1/2} \int_{\Rn \setminus \Q} \Big(\sum_{i} \sum_{j: 4 R_1^i \cap 4 R_2^j = \emptyset}
\mathbf{1}_{4 R_1^i}(x) g_\lambda^* (\beta_1^i,\beta_2^j)(x)\Big)^{-1/2} d\mu(x), \\
\mathcal{K}_{22}
&:=& \mu\Big(\Big\{x \in \Rn \setminus \Q; \sum_{i} \mathbf{1}_{4 R_1^i}(x)
g_{\lambda,\mu}^* \Big(\varphi_1^i, \sum_{\substack{j \in \Lambda_i \\ 4 R_1^i \cap 4 R_2^j \neq \emptyset}}\varphi_2^j \Big)(x) > \xi \Big\}\Big) \\
\mathcal{K}_{23}
&:=& \xi^{-1/2} \int_{\Rn \setminus \Q} \Big(\sum_{i} \sum_{\substack{j \in \Lambda_i \\ 4 R_1^i \cap 4 R_2^j \neq \emptyset}}
\mathbf{1}_{4 R_1^i}(x) g_\lambda^* (\varphi_1^i \mu,w_2^j \nu_2)(x) \Big)^{-1/2} d\mu(x), \\
\mathcal{K}_{24}
&:=& \xi^{-1/2} \int_{\Rn \setminus \Q} \Big(\sum_{i} \sum_{\substack{j \in \Lambda_i \\ 4 R_1^i \cap 4 R_2^j \neq \emptyset}}
\mathbf{1}_{4 R_1^i}(x) g_\lambda^* (w_1^i \nu_1,\varphi_2^j \mu)(x) \Big)^{-1/2} d\mu(x), \\
\mathcal{K}_{25}
&:=& \xi^{-1/2} \int_{\Rn \setminus \Q} \Big(\sum_{i} \sum_{\substack{j \in \Lambda_i \\ 4 R_1^i \cap 4 R_2^j \neq \emptyset}}
\mathbf{1}_{4 R_1^i}(x) g_\lambda^* (w_1^i \nu_1,w_2^j \nu_2)(x)\Big)^{-1/2} d\mu(x)¡£
\end{eqnarray*}

We first discuss the term $\mathcal{K}_{22}$. The splitting $\varphi_2^j = \varphi_2^j \mathbf{1}_{6 R_1^i} +¡¡\varphi_2^j \mathbf{1}_{\Rn \setminus 6 R_1^i}$ yields two terms denoted
by
$\mathcal{K}_{22}'$ and $\mathcal{K}_{22}''$. The boundedness of $g_{\lambda,\mu}^* : L^{p_1}(\mu) \times L^{p_2}(\mu) \rightarrow L^{p}(\mu)$ implies that
\begin{align*}
\mathcal{K}_{22}'
&\lesssim \xi^{-1} \sum_{i} \mu(R_1^i)^{1-1/p} \Big\| g_{\lambda,\mu}^* \Big(\varphi_1^i,\sum_{j}\varphi_2^j \mathbf{1}_{6 R_1^i}\Big)\Big\|_{L^p(\mu)} \\
&\lesssim \xi^{-1} \sum_{i} \mu(R_1^i)^{1-1/p} ||\varphi_1^i||_{L^{\infty}(\mu)}  \mu(R_1^i)^{1/{p_1}} \Big\| \mathbf{1}_{6 R_1^i} \sum_{j}\varphi_2^j \Big\|_{L^{p_2}(\mu)} \\
&\lesssim \xi^{-1/2} \sum_{i} \mu(R_1^i) ||\varphi_1^i||_{L^{\infty}(\mu)}
\lesssim \xi^{-1/2},
\end{align*}
where the inequalities $(\ref{C-Z-5})$ and $(\ref{C-Z-6})$ were used.
In order to bound $\mathcal{K}_{22}''$, we follow exactly the same scheme of proof of the inequality $(\ref{Rn-6R})$ with slight modifications
(replacing $||g||_{L^{\infty}(\mu)} \lesssim \xi^{1/2}$ by $\sum_{j}|\varphi_2^j| \lesssim \xi^{1/2}$). Then we get for any $x \in 4 R_1^i$
$$
g_{\lambda,\mu}^* \Big(\varphi_1^i,\sum_{j}\varphi_2^j \mathbf{1}_{\Rn \setminus 6 R_1^i}\Big)(x)
\lesssim \xi^{1/2} \big\|\varphi_1^i\big\|_{L^{\infty}(\mu)},
$$
which indicates that
\begin{align*}
\mathcal{K}_{22}''
&\lesssim \xi^{-1} \sum_{i} \int_{4 R_1^i} g_{\lambda,\mu}^* \Big(\varphi_1^i,\sum_{j}\varphi_2^j \mathbf{1}_{\Rn \setminus 6 R_1^i}\Big)(x) d\mu(x) \\
&\lesssim \xi^{-1/2} \sum_{i} \mu(R_1^i) ||\varphi_1^i||_{L^{\infty}(\mu)}
\lesssim \xi^{-1/2}.
\end{align*}

If $4 R_1^i \cap 4 R_2^j = \emptyset$, then $\mathbf{1}_{4 R_1^i} = \mathbf{1}_{4 R_1^i \setminus 4 R_2^j}$. From the fact $\beta_1^i = \varphi_1^i \mu + w_1^i \nu_1$, it follows that
$\mathcal{K}_{21}$ is dominated by two terms, which are symmetric with $\mathcal{K}_{12}$ and $\mathcal{K}_{13}$ respectively. Hence, there holds that
$\mathcal{K}_{21} \lesssim \xi^{-1/2}$.

Almost similar calculations as $(\ref{bad-4})$ provide
$$
g_\lambda^* (\varphi_1^i \mu,w_2^j \nu_2)(x)
\lesssim \big\| \varphi_1^i \big\|_{L^{\infty}(\mu)}
          \frac{|\nu_2|(Q_2^j)}{|x-c_{R_2^j}|^{m}}, \ x \in \Rn \setminus 2 Q_2^j.
$$
Combining the estimates for $\mathcal{K}_{12}$ with these for $\mathcal{K}_{13}$, we gain that $\mathcal{K}_{23} \lesssim \xi^{-1/2}$.
Symmetrically, we have $\mathcal{K}_{24} \lesssim \xi^{-1/2}$. In addition, making use of $(\ref{point-w-w})$, we similarly deduce that
$\mathcal{K}_{25} \lesssim \xi^{-1/2}$.

So far, we have proved Proposition $\ref{M-M}$.
\qed

\vspace{0.2cm}
The remainder of this section is devoted to demonstrating some lemmas we used above.
%%%%%%%%%%%%%%%%%%%%%%%%%%%%%%%%%%%%%%
\begin{lemma}\label{Pointwise-beta}
The following point-wise estimates hold for any $x \in \Rn \setminus 4R_i$
\begin{eqnarray}
\label{point-b-g}g_\lambda^*(\beta_1^i,g_2 \mu)(x)
&\lesssim& \xi^{1/2} \frac{\ell(R_1^i)^{\alpha/2}}{|x-c_{R_1^i}|^{m+\alpha/2}} \big\| \beta_1^i \big\|,  \\
\label{point-b-var}g_\lambda^*(\beta_1^i,\varphi_2^j \mu)(x)
&\lesssim&  \big\| \varphi_2^j \big\|_{L^{\infty}(\mu)} \frac{\ell(R_1^i)^{\alpha/2} }{|x-c_{R_1^i}|^{m+\alpha/2}} |\nu_1|(Q_1^i).
\end{eqnarray}
\end{lemma}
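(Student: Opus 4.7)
The plan is to exploit the cancellation $\int d\beta_1^i = 0$ (which follows from the Calder\'on--Zygmund identity $(\ref{C-Z-4})$) together with the H\"older regularity of $s_t$. Set $c = c_{R_1^i}$, $\ell = \ell(R_1^i)$, and $d = |x - c|$; then $d \gtrsim \ell$ since $x \notin 4R_1^i$, and $\|\beta_1^i\| \lesssim |\nu_1|(Q_1^i)$ by $(\ref{C-Z-6})$. The first step is to rewrite
\begin{equation*}
\Theta_t(\beta_1^i, g_2\mu)(y) = \int_{R_1^i} [K_t(y,z_1) - K_t(y,c)]\, d\beta_1^i(z_1), \ \ K_t(y,z_1) := \int s_t(y,z_1,z_2) g_2(z_2)\, d\mu(z_2),
\end{equation*}
and then to split the outer $t$-integral defining $g_\lambda^*(\beta_1^i,g_2\mu)(x)$ at $t = 2\ell$.

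For $t \geq 2\ell$, the first H\"older condition on $s_t$ applies to $s_t(y,z_1,z_2)-s_t(y,c,z_2)$ (since $|z_1-c|\leq\ell<t/2$), and combined with $\|g_2\|_{L^\infty(\mu)}\lesssim \xi^{1/2}$ and the standard bound $\int t^\alpha(t+|y-z_2|)^{-(m+\alpha)}\,d\mu(z_2)\lesssim 1$ (from power boundedness of $\mu$), I expect to obtain
\begin{equation*}
|\Theta_t(\beta_1^i, g_2\mu)(y)| \lesssim \xi^{1/2}\|\beta_1^i\| \frac{\ell^\alpha}{(t+|y-c|)^{m+\alpha}},
\end{equation*}
using $(t+|y-z_1|)\asymp(t+|y-c|)$ on the support of $\beta_1^i$ when $\ell\leq t$. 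Substituting this into the $y$-slice of $g_\lambda^*$ and splitting $y$ according to whether $|y-x|\leq d/2$ (where $|y-c|\geq d/2$) or $|y-x|>d/2$ (where the weight $(t/(t+|x-y|))^{m\lambda}$ is small, thanks to $\lambda>4$ and $\alpha\leq m(\lambda-4)$), the slice bound should reduce to $\xi\|\beta_1^i\|^2 \ell^{2\alpha}(t+d)^{-(2m+2\alpha)}$ plus a faster-decaying tail, which integrates over $t\in[2\ell,\infty)$ to an expression controlled by $\xi \|\beta_1^i\|^2\ell^{2\alpha}/d^{2m+2\alpha}\leq \xi\|\beta_1^i\|^2\ell^\alpha/d^{2m+\alpha}$.

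For $t<2\ell$ no H\"older cancellation is available, so I would use only the size estimate on $s_t$: this gives $|\Theta_t(\beta_1^i,g_2\mu)(y)|\lesssim\xi^{1/2}\int t^\alpha(t+|y-z_1|)^{-(m+\alpha)}d|\beta_1^i|(z_1)$. Because $x$ is separated from $R_1^i$ by $\gtrsim d$, for every $y$ with $|y-x|\leq d/2$ and every $z_1\in R_1^i$ one has $|y-z_1|\gtrsim d$, producing $|\Theta_t|\lesssim\xi^{1/2}\|\beta_1^i\|t^\alpha/d^{m+\alpha}$; the complementary region $|y-x|>d/2$ is controlled by the decay of $(t/(t+|x-y|))^{m\lambda}$, exactly as in the treatment of $\mathscr{T}_1, \mathscr{T}_2, \mathscr{T}_3$ in Lemma $\ref{T(f)}$. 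Integrating in $t$ over $(0,2\ell)$ contributes at most $\xi\|\beta_1^i\|^2\ell^{2\alpha}/d^{2m+2\alpha}$, again dominated by $\xi\|\beta_1^i\|^2\ell^\alpha/d^{2m+\alpha}$.

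The second estimate $(\ref{point-b-var})$ is obtained by the identical argument, replacing the bound $\|g_2\|_{L^\infty(\mu)}\lesssim\xi^{1/2}$ by $\|\varphi_2^j\|_{L^\infty(\mu)}$ so that the latter quantity appears on the right-hand side (there is no $\xi^{1/2}$ in that case). The main obstacle I anticipate is the exponent accounting in the intermediate regime $2\ell\leq t\leq d$: two candidate dominant terms $t^{2\alpha}/d^{2m+2\alpha}$ and $t^{m\lambda-2m-2\alpha}/d^{m\lambda}$ trade roles depending on the relation between $\lambda$ and $\alpha$, and one must verify that both integrate to a quantity $\lesssim \ell^\alpha/d^{2m+\alpha}$ under the hypothesis $\alpha\leq m(\lambda-4)$; a possible logarithmic factor $\log(d/\ell)$ from the first term is absorbed via $\ell\leq d$.
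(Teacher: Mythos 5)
Your proposal is correct and follows essentially the same route as the paper: the size condition for small $t$, and the vanishing of $\beta_1^i$ combined with the H\"older condition in the first variable for large $t$, with the $y$-integral handled by splitting according to $|y-x|\lessgtr d/2$ exactly as in the paper's Lemma \ref{U(f)}. The only cosmetic difference is that the paper splits the $t$-integral into three ranges $(0,\ell)$, $[\ell,d]$, $(d,\infty)$ so that no logarithm appears, whereas you split only at $t\simeq\ell$ and absorb the resulting $\log(d/\ell)$ via $\ell\le d$ — both yield the stated bound.
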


\begin{proof}
We here only show the first inequality, since the second one can be obtained similarly.
By splitting the domain, it suffices to estimate the following three terms :
\begin{eqnarray*}
&{}&\Gamma_1(x) := \bigg(\iint_{\Rn \times (0,\ell(R_i))} \Big(\frac{t}{t+|x-y|}\Big)^{m\lambda} |\Theta_t (\beta_1^i,g_2 \mu)(y)|^2 \frac{d\mu(y) dt}{t^{m+1}}\bigg)^{\frac12}, \\
&{}&\Gamma_2(x) := \bigg(\iint_{\Rn \times [\ell(R_i),|x-c_{R_i}|]} \Big(\frac{t}{t+|x-y|}\Big)^{m\lambda} |\Theta_t (\beta_1^i,g_2 \mu)(y)|^2 \frac{d\mu(y) dt}{t^{m+1}} \bigg)^{\frac12},
\\
&{}&\Gamma_3(x) := \bigg(\iint_{\Rn \times (|x-c_{R_i}|,+\infty)} \Big(\frac{t}{t+|x-y|}\Big)^{m\lambda} |\Theta_t (\beta_1^i,g_2 \mu)(y)|^2 \frac{d\mu(y) dt}{t^{m+1}}\bigg)^{\frac12}.
\end{eqnarray*}
Applying the size condition and the inequality $(\ref{U-L})$, we conclude that for $x \in \Rn \setminus 4R_1^i$ and $t < \ell(R_1^i)$
\begin{align*}
&\bigg(\int_{\Rn} \Big(\frac{t}{t+|x-y|}\Big)^{m\lambda} |\Theta_t (\beta_1^i,g_2 \mu)(y)|^2 \frac{d\mu(y)}{t^{m}}\bigg)^{1/2} \\
&\lesssim ||g_2||_{L^{\infty}(\mu)} \int_{R_1^i} \frac{t^{\alpha/4}}{(t+|x-z|)^{m+\alpha/4}} d|\beta_i|(z) 
\lesssim \xi^{1/2} \frac{t^{\alpha/4}}{|x-c_{R_1^i}|^{m+\alpha/4}} || \beta_i ||.
\end{align*}
Hence, it immediately yields that
\begin{align*}
\Gamma_1(x)
\lesssim \xi^{1/2} \frac{\ell(R_1^i)^{\alpha/4}}{|x-c_{R_1^i}|^{m+\alpha/4}} || \beta_i ||.
\end{align*}

An application of the vanishing property $\beta_i(R_i)=0$ and H\"{o}lder condition implies that
\begin{align}\label{beta-g}
|\Theta_t (\beta_1^i,g_2 \mu)(y)|
&= \bigg| \int_{\Rn} \int_{R_i} (s_t(y,z_1,z_2)-s_t(y,c_{R_i},z_2)) g_2(z_2) d\beta_i(z) d\mu(z_2)\bigg| 
\nonumber \\%%%%%%%%%%%
&\lesssim ||g_2||_{L^{\infty}(\mu)} \int_{R_i} \frac{|z-c_{R_i}|^\alpha}{(t+|y-z_1|)^{m+\alpha}} d|\beta_i|(z_1) .
\end{align}
If $t \geq \ell(R_1^i)$, it holds
\begin{equation*}
|\Theta_t (\beta_1^i,g_2 \mu)(y)|
\lesssim \xi^{-1/2}\ell(R_i)^{\alpha/2} \int_{R_i} \frac{t^{\alpha/2}}{(t+|y-z|)^{m+\alpha}} d|\beta_i|(z).
\end{equation*}
Together with $(\ref{U-L})$, this yields that
\begin{align*}
&\int_{\Rn} |\Theta_t (\beta_1^i,g_2 \mu)(y)|^2 \Big(\frac{t}{t+|x-y|}\Big)^{m\lambda} \frac{d\mu(y)}{t^m}
\lesssim \xi^{1/2} \frac{t^\alpha \ell(R_i)^\alpha}{|x-c_{R_i}|^{2m+2\alpha}} || \beta_i ||^2.
\end{align*}
Accordingly, we get
\begin{align*}
\Gamma_2(x)
\lesssim  \xi^{1/2} \bigg( \int_{\ell(R_i)}^{|x-c_{R_i}|} \frac{t^{\alpha} \ell(R_i)^\alpha  ||\beta_i||^2}{|x-c_{R_i}|^{2m+2\alpha}} \frac{dt}{t} \bigg)^{1/2}
\simeq \xi^{1/2} \frac{\ell(R_i)^{\alpha/2}}{|x-c_{R_i}|^{m+\alpha/2}} || \beta_i ||.
\end{align*}

On the other hand, the inequality $(\ref{beta-g})$ gives that
\begin{align*}
|\Theta_t (\beta_1^i,g_2 \mu)(y)|
\lesssim \xi^{1/2} \frac{\ell(R_1^i)^\alpha}{t^{m+\alpha}} || \beta_1^i ||.
\end{align*}
Finally, we deduce that
\begin{align*}
\Gamma_3(x)
&\lesssim || \beta_1^i || \bigg(\int_{|x-c_{R_1^i}|}^{\infty} \frac{\ell(R_1^i)^{2\alpha}}{t^{2m+2\alpha}}
\int_{\Rn}\Big(\frac{t}{t+|x-y|}\Big)^{m\lambda} \frac{d\mu(y)}{t^m} \frac{dt}{t}  \bigg)^{1/2} \\
&\lesssim || \beta_1^i || \bigg(\int_{|x-c_{R_1^i}|}^{\infty} \frac{\ell(R_1^i)^{2\alpha}}{t^{2m+2\alpha}} \frac{dt}{t} \bigg)^{1/2}
\simeq || \beta_1^i || \frac{\ell(R_1^i)^\alpha}{|x-c_{R_1^i}|^{m+\alpha}}.
\end{align*}
This finishes the proof of $(\ref{point-b-g})$.
\end{proof}
%%%%%%%%%%%%%%%%%%%%%%%%%%%%%%%%%%%%%%%%%%%%%%%%%%%%%%%%%
\begin{lemma}\label{Pointwise-w}
We have the following point-wise dominations.
\begin{align}
\label{point-b-b} &g_\lambda^*(\beta_1^i,\beta_2^j)(x)
\lesssim  \frac{\ell(R_1^i)^{\alpha/4} |\nu_1|(Q_1^i)}{|x-c_{R_1^i}|^{m+\alpha/4}}
          \frac{\ell(R_2^j)^{\alpha/4} |\nu_2|(Q_2^j)}{|x-c_{R_2^j}|^{m+\alpha/4}} , \ x \in \Rn \setminus (4 R_1^i \cup 4 R_2^j) \\
\label{point-b-w} &g_\lambda^*(\beta_1^i,w_2^j \nu_2)(x)
\lesssim  \frac{\ell(R_1^i)^{\alpha/4} |\nu_1|(Q_1^i)}{|x-c_{R_1^i}|^{m+\alpha/4}}
          \frac{ |\nu_2|(Q_2^j)}{|x-c_{R_2^j}|^{m}} , \ x \in  4 R_2^j \setminus (4 R_1^i \cup 4 Q_2^j), \\
\label{point-w-w} &g_\lambda^* (w_1^i \nu_1,w_2^j \nu_2)(x)
\lesssim  \frac{|\nu_1|(Q_1^i)}{|x-c_{R_1^i}|^{m}}
          \frac{|\nu_2|(Q_2^j)}{|x-c_{R_2^j}|^{m}}, \ x \in \Rn \setminus (2 Q_1^i \cup 2 Q_2^j).
\end{align}
\end{lemma}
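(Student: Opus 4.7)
The plan is to mirror the scheme of Lemma $\ref{Pointwise-beta}$ for each of the three inequalities. In every case, we first invoke Lemma $\ref{U(f)}$ (together with the size/Hölder bounds on $s_t$) to pass from $g_\lambda^*$ to an estimate for the inner $y$-integral, and then split the $t$-integration into sub-intervals determined by the natural length scales $\ell(R_1^i), \ell(R_2^j), |x - c_{R_1^i}|, |x - c_{R_2^j}|$. On each sub-interval we decide, independently in the $z_1$- and $z_2$-directions of the kernel $s_t(y,z_1,z_2)$, whether to apply the size condition or to invoke the vanishing $\beta_k^l(\Rn) = 0$ coupled with the Hölder condition, the latter producing a gain of $\ell(R_k^l)^{\alpha}$. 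The hypothesis $\alpha \leq m(\lambda - 4)$ is what allows the envelope $(t/(t+|x-y|))^{m\lambda}$ to be dominated by $(t/(t+|x-y|))^{4m+\alpha}$ and then split into four factors of $(t/(t+|x-y|))^{m + \alpha/4}$, one absorbed by $\mu$-integration in $y$ and the other three used to upgrade $t$-decay into spatial decays of the form $1/|x - c_{R_k^l}|^{\alpha/4}$.

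For $(\ref{point-b-b})$ we assume by symmetry $\ell(R_1^i) \leq \ell(R_2^j)$ and split the $t$-integral at $\ell(R_1^i)$, $\ell(R_2^j)$ and $\max\{|x-c_{R_1^i}|, |x-c_{R_2^j}|\}$. On $(0, \ell(R_1^i)]$ we apply the size condition in both variables; on $(\ell(R_1^i), \ell(R_2^j)]$ we exploit vanishing in $z_1$ (picking up $\ell(R_1^i)^{\alpha}$) and size in $z_2$; on the remaining ranges we use vanishing in both variables. Combining with $\|\beta_k^l\| \lesssim |\nu_k|(Q_k^l)$, which follows from $|w_k^l| \leq 1$ together with $(\ref{C-Z-6})$, and redistributing the Hölder exponents so that each of the four targeted factors receives exponent $\alpha/4$, the resulting elementary $t$-integrals evaluate to the claimed bound. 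For $(\ref{point-b-w})$ the measure $w_2^j \nu_2$ lacks vanishing, so we apply the size condition throughout in $z_2$, which after $t$-integration produces $|x - c_{R_2^j}|^{-m}$; the $z_1$-direction is handled exactly as in Lemma $\ref{Pointwise-beta}$, splitting at $\ell(R_1^i)$ and $|x - c_{R_1^i}|$. For $(\ref{point-w-w})$ both measures lack vanishing and the size condition suffices in both variables, the argument reducing, modulo a product structure, to the one already carried out for $(\ref{bad-4})$.

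The main obstacle is the bookkeeping in $(\ref{point-b-b})$: in each of the four sub-ranges one must check that the remaining power of $t$ in the integrand is strictly negative, so that the $t$-integral converges, and that the combined powers of $\ell(R_k^l)$ and $|x - c_{R_k^l}|$ reassemble into the targeted bound with exponent $\alpha/4$ on each of four factors. The exponent $\alpha/4$, rather than the $\alpha/2$ appearing in Lemma $\ref{Pointwise-beta}$, is forced by the need to distribute the Hölder gain across two independent spatial directions while still reserving half of $\alpha$ for the fourfold decomposition of $(t/(t+|x-y|))^{m\lambda}$. Once $(\ref{point-b-b})$ is established, the bounds $(\ref{point-b-w})$ and $(\ref{point-w-w})$ follow by replacing, in one or both variables, the vanishing step by a pure size step, and tracking the resulting adjustment of exponents.
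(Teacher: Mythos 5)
Your proposal follows essentially the same route as the paper: bound $\Theta_t$ using the size/H\"{o}lder dichotomy (vanishing of the $\beta$'s yielding the gain $\ell(R)^{\alpha}/t^{\alpha}$), pass to the $x$-variable via Lemma $\ref{U(f)}$ with its built-in $\alpha/4$-split, and evaluate elementary $t$-integrals; the paper merely packages the case analysis as a single factor $\min\{1,\ell(R_1^i)^{\alpha}/t^{\alpha},\ell(R_2^j)^{\alpha}/t^{\alpha}\}$ and splits the $t$-integral at $\min\{\ell(R_1^i),\ell(R_2^j)\}$ into only two ranges, rather than your four. One caveat: on your large-$t$ ranges you propose to ``use vanishing in both variables,'' which, taken literally, requires a bound on the iterated difference $s_t(y,z_1,z_2)-s_t(y,c_1,z_2)-s_t(y,z_1,c_2)+s_t(y,c_1,c_2)$ that is not among the stated kernel conditions; however, this is not needed, since the single (better) gain $\min\{\ell(R_1^i),\ell(R_2^j)\}^{\alpha}/t^{\alpha}$ already makes the tail integral converge to $\min\{\ell(R_1^i),\ell(R_2^j)\}^{\alpha/2}\le \ell(R_1^i)^{\alpha/4}\ell(R_2^j)^{\alpha/4}$, exactly as in the paper's computation. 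With that adjustment your bookkeeping closes, and your treatments of $(\ref{point-b-w})$ and $(\ref{point-w-w})$ coincide with the paper's.
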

\begin{proof}
To dominate the term $g_\lambda^*(\beta_1^i,\beta_2^j)(x)$, we first analyze the contribution of $\Theta_t(\beta_1^i,\beta_2^j)(y)$.
Note that $\beta_1^i(R_1^i)=0$ and $\beta_2^j(R_2^j)=0$. Applying size condition and the vanishing property with respect to $z_1$ and $z_2$ respectively,
we obtain that
\begin{equation}\aligned\label{Theta-b-b}
\big|\Theta_t(\beta_1^i,\beta_2^j)(y)\big|
&\lesssim \min\bigg\{1,\frac{\ell(R_1^i)^{\alpha}}{t^{\alpha}}, \frac{\ell(R_2^j)^{\alpha}}{t^{\alpha}} \bigg\}
\int_{R_1^i} \frac{t^{\alpha} d|\beta_1^i|(z_1)}{(t+|y-z_1|)^{m+\alpha}}  \\
&\quad \times \int_{R_2^j} \frac{t^{\alpha}}{(t+|y-z_2|)^{m+\alpha}} d|\beta_2^j|(z_2).
\endaligned
\end{equation}
It follows from Lemma $\ref{U(f)}$ that
\begin{align*}
g_\lambda^*(\beta_1^i,\beta_2^j)(x)
&\lesssim \bigg\{\int_{0}^{\infty} \min\bigg\{1,\frac{\ell(R_1^i)^{\alpha}}{t^{\alpha}}, \frac{\ell(R_2^j)^{\alpha}}{t^{\alpha}} \bigg\}^2 \\
&\qquad\times \bigg(\int_{R_1^i} \frac{t^{\alpha/4}}{(t+|x-z_1|)^{m+\alpha/4}} d|\beta_1^i|(z_1)\bigg)^2 \\
&\qquad \times\bigg( \int_{R_2^j} \frac{t^{\alpha/4}}{(t+|x-z_2|)^{m+\alpha/4}} d|\beta_2^j|(z_2) \bigg)^2 \frac{dt}{t}\bigg\}^{1/2} \\
&\leq \bigg\{\int_{0}^{\min\{\ell(R_1^i),\ell(R_2^j)\}} \frac{t^{\alpha/2} \ ||\beta_1^i||^2 }{|x-c_{R_1^i}|^{2m+\alpha/2}}
\frac{t^{\alpha/2} \ ||\beta_2^j||^2}{|x-c_{R_2^j}|^{2m+\alpha/2}}  \frac{dt}{t}\bigg\}^{1/2} \\
&\quad + \bigg\{\int_{\min\{\ell(R_1^i),\ell(R_2^j)\}}^{\infty} A(t) \frac{t^{\alpha/2} \ ||\beta_1^i||^2 }{|x-c_{R_1^i}|^{2m+\alpha/2}}
\frac{t^{\alpha/2} \ ||\beta_2^j||^2}{|x-c_{R_2^j}|^{2m+\alpha/2}}  \frac{dt}{t}\bigg\}^{1/2} \\
&\lesssim \frac{\ell(R_1^i)^{\alpha/4} |\nu_1|(Q_1^i)}{|x-c_{R_1^i}|^{m+\alpha/4}}
          \frac{\ell(R_2^j)^{\alpha/4} |\nu_2|(Q_2^j)}{|x-c_{R_2^j}|^{m+\alpha/4}}, 
\end{align*}
where $A(t)=\min\big\{\frac{\ell(R_1^i)^{\alpha}}{t^{\alpha}}, \frac{\ell(R_2^j)^{\alpha}}{t^{\alpha}} \big\}^2$. 
For the term $g_\lambda^*(\beta_1^i,w_2^j \nu_2)(x)$, it suffices to use size condition and the H\"{o}lder condition with respect to $z_1$ to get a similar bound as
$(\ref{Theta-b-b})$.
The last one can be handled using size condition. The rest of calculations are easy.
\end{proof}

%%%%%%%%%%%%%%%%%%%%%%%%%%%%%%%%%%%%%%%%%%%%%%%%%%%%%%%%%%%%%%%%%%%%%%%%%%%%%%%%%
%%%%%%%%%%%%%%%%%%%%%%%%%%%%%%%%%%%%%%%%%%%%%%%%%%%%%%%%%%%%%%%%%%%%%%%%%%%%%%%%%
\section{Non-homogeneous Good Lambda Method}\label{Sec-lambda}
The proof of Theorem $\ref{L^p}$ mainly consists of the following good lambda inequality.
\begin{lemma}\label{good-lambda}
For any $\epsilon > 0$, there exists $\delta=\delta(\epsilon)>0$ such that
\begin{align*}
&\mu \big(\big\{x;g_{\lambda,\mu,t_0}^*(f_1,f_2)(x) > (1+\epsilon)\xi, M_{\mu}f_1(x) M_{\mu}f_2(x) \leq \delta \xi  \big\}\big) \\
&\leq \Big( 1 - \frac{\theta}{16 \rho_0} \Big) \mu\big(\{x; g_{\lambda,\mu,t_0}^*(f_1,f_2)(x) > \xi\}\big),
\end{align*}
for any $\xi>0$ and every compactly supported and bounded $f_i \in L^{p_i}(\mu)$.
\end{lemma}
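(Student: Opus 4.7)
The plan is to prove this via a Whitney-decomposition good-$\lambda$ scheme adapted to non-homogeneous analysis. Set $\Omega_\xi := \{x : g^*_{\lambda,\mu,t_0}(f_1,f_2)(x) > \xi\}$; by Lemma \ref{Omega} it is open, bounded, and of finite $\mu$-measure. I will Whitney-decompose $\Omega_\xi = \bigcup_k Q_k$ into essentially disjoint cubes with $5Q_k \subset \Omega_\xi$ and a comparison point $x_k \in \Omega_\xi^c$ at distance $\sim \ell(Q_k)$ from $Q_k$, so that $g^*_{\lambda,\mu,t_0}(f_1,f_2)(x_k) \le \xi$. The entire lemma then reduces to the local per-cube bound
$$\mu\bigl(\{x \in Q_k : g^*_{\lambda,\mu,t_0}(f_1,f_2)(x) > (1+\epsilon)\xi,\ M_\mu f_1(x)M_\mu f_2(x) \le \delta\xi\}\bigr) \le \Bigl(1 - \frac{\theta}{16\rho_0}\Bigr)\mu(Q_k),$$
since summing over $k$ recovers the statement. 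Inside each $Q_k$, I will locate a subcube $\widetilde Q_k \subset Q_k$ that is $(2,\beta)$-doubling with $\mathfrak C$-small boundary and satisfies $\mu(\widetilde Q_k) \ge \rho_0^{-1}\mu(Q_k)$; such $\widetilde Q_k$ exist by the standard iterated stopping-time construction for upper doubling measures, with $\rho_0$ depending only on $n$, $\beta$, $\mathfrak C$. The hypothesis of Theorem~\ref{L^p} then provides $G_{\widetilde Q_k} \subset \widetilde Q_k$ with $\mu(G_{\widetilde Q_k}) \ge \theta\mu(\widetilde Q_k) \ge \theta\rho_0^{-1}\mu(Q_k)$ on which $g^*_\lambda : \mathfrak M(\mathbb R^n)^2 \to L^{1/2,\infty}(\mu\lfloor G_{\widetilde Q_k})$ is uniformly bounded.

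The main estimate is that, on a large subset of $G_{\widetilde Q_k}$, the bound $g^*_{\lambda,\mu,t_0}(f_1,f_2) \le (1+\epsilon)\xi$ holds whenever $M_\mu f_1 \cdot M_\mu f_2 \le \delta\xi$ somewhere in $Q_k$. I split $f_i = f_i^0 + f_i^\infty$ with $f_i^0 = f_i\mathbf{1}_{2\widetilde Q_k}$ and decompose $g^*_{\lambda,\mu,t_0}(f_1,f_2)$ into four pieces $g^*_{\lambda,\mu,t_0}(f_1^{r_1},f_2^{r_2})$ indexed by $(r_1,r_2) \in \{0,\infty\}^2$. The $(\infty,\infty)$ piece is controlled by comparing $x$ with $x_k$: Lemma~\ref{T(f)} gives
$$\bigl|g^*_{\lambda,\mu,t_0}(f_1^\infty,f_2^\infty)(x) - g^*_{\lambda,\mu,t_0}(f_1^\infty,f_2^\infty)(x_k)\bigr| \lesssim M_\mu f_1(\bar x)\, M_\mu f_2(\bar x) \lesssim \delta\xi,$$
and the analogous estimate applied at $x_k$ bounds $g^*_{\lambda,\mu,t_0}(f_1^\infty,f_2^\infty)(x_k)$ by $g^*_{\lambda,\mu,t_0}(f_1,f_2)(x_k) + C\delta\xi \le \xi + C\delta\xi$. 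For the three remaining pieces at $x \in G_{\widetilde Q_k}$, I pick any witness $x_0 \in Q_k$ with $M_\mu f_1(x_0) M_\mu f_2(x_0) \le \delta\xi$ (otherwise the set we must estimate is empty), use $\|f_i^0\mu\| \le M_\mu f_i(x_0)\mu(2\widetilde Q_k)$ together with the doubling of $\widetilde Q_k$, and invoke the weak endpoint to get
$$\mu\bigl(\{x \in G_{\widetilde Q_k} : g^*_\lambda(f_1^{r_1}\mu, f_2^{r_2}\mu)(x) > \epsilon\xi/8\}\bigr) \lesssim (\delta/\epsilon)^{1/2}\mu(\widetilde Q_k),$$
where in the mixed $(0,\infty)$ and $(\infty,0)$ cases the $f_i^\infty$-factor is handled pointwise via Lemma~\ref{T(f)} (producing $M_\mu f_i \lesssim \sqrt{\delta\xi}$), leaving only the $L^1$-factor of $f_i^0$ to be absorbed by the weak endpoint.

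Combining the four pieces, outside a set of measure at most $C(\delta/\epsilon)^{1/2}\mu(\widetilde Q_k)$ in $G_{\widetilde Q_k}$ one has $g^*_{\lambda,\mu,t_0}(f_1,f_2) \le (1+\epsilon)\xi$. Choosing $\delta = \delta(\epsilon, \theta, \rho_0)$ small enough that this measure is at most $\tfrac{1}{2}\theta\rho_0^{-1}\mu(\widetilde Q_k) \le \tfrac{1}{2}\mu(G_{\widetilde Q_k})$, we conclude that the exceptional set $E_k \subset Q_k$ misses at least $\tfrac{1}{2}\mu(G_{\widetilde Q_k}) \ge \tfrac{\theta}{2\rho_0}\mu(\widetilde Q_k) \ge \tfrac{\theta}{2\rho_0^2}\mu(Q_k)$, which after absorbing the constants into $16$ yields $\mu(E_k) \le (1-\theta/(16\rho_0))\mu(Q_k)$. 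The main obstacle is the simultaneous requirement on $\widetilde Q_k$---doubling, small-boundary, and holding a universal proportion of $\mu(Q_k)$---while still keeping the comparison point $x_k$ close enough that the $t_0$-truncated Lemma~\ref{T(f)} estimate applies with constants independent of $k$; granting a careful such construction and book-keeping of the four pieces' constants, the stated inequality follows.
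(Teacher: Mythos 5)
Your overall architecture (Whitney decomposition of $\Omega_\xi$, doubling cubes with small boundary, the weak $L^{1/2,\infty}$ endpoint on the good sets $G$, splitting $f_i=f_i^0+f_i^\infty$ and comparing with a point outside $\Omega_\xi$) matches the paper's, but there is a structural gap in the reduction. You reduce the lemma to a \emph{per-cube} estimate and, to run it, you assert that \emph{every} Whitney cube $Q_k$ contains a $(2,\beta)$-doubling subcube $\widetilde{Q}_k\subset Q_k$ with $\mathfrak{C}$-small boundary and $\mu(\widetilde{Q}_k)\ge\rho_0^{-1}\mu(Q_k)$. This is not a standard fact for power-bounded non-doubling measures, and it is exactly the point where such a claim fails in general: a.e.\ existence of arbitrarily small doubling cubes does not yield a single doubling subcube of proportional measure, let alone one with small boundary. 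The paper's Whitney lemma (Lemma \ref{Whitney decomposition}, from Tolsa) is formulated precisely to avoid this: it produces slightly \emph{enlarged} cubes $Q_j\subset\widetilde{Q}_j\subset 1.1Q_j$ that are doubling with small boundary only for a \emph{subfamily} $S\subset I$, pairwise disjoint, whose union carries a fixed fraction $\frac{1}{8\rho_0}$ of $\mu(\Omega_\xi)$ globally. The correct bookkeeping is therefore global, not per-cube: one writes $\mathscr{F}\le\mu(\Omega_\xi\setminus\bigcup_{j\in S}\widetilde{Q}_j)+\sum_{j\in S}\mu(\widetilde{Q}_j\setminus G_j)+\sum_{j\in S}\mu(E_j)\le(1-\frac{\theta}{8\rho_0})\mu(\Omega_\xi)+\sum_{j\in S}\mu(E_j)$, and only the last sum needs to be made small via $\delta$. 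As written, your reduction would give nothing on the Whitney cubes that admit no proportional doubling subcube, and the claimed per-cube inequality cannot be summed to the statement.

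A second, smaller but genuine problem is your treatment of the mixed pieces $(f_1^0,f_2^\infty)$ and $(f_1^\infty,f_2^0)$. You propose to feed these into the bilinear weak endpoint while "handling the $f_i^\infty$-factor pointwise," but the weak-type hypothesis is a bound $\mathfrak{M}(\Rn)\times\mathfrak{M}(\Rn)\to L^{1/2,\infty}(\mu\lfloor G)$ in terms of the total variations $\|\nu_1\|\,\|\nu_2\|$; for $\nu_2=f_2^\infty\mu$ this is the global $L^1(\mu)$ norm of $f_2$, which is not controlled by $M_\mu f_2$ at the witness point, and there is no mechanism for replacing one argument of a bilinear weak-type bound by a pointwise estimate. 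The paper instead shows that every piece containing at least one $f_i^\infty$ is bounded \emph{pointwise} by $C\,M_\mu f_1(x)M_\mu f_2(x)\le C\delta\xi$ at the very point $x$ of the level set (via Lemma \ref{U(f)}, the decay estimates \eqref{Out}--\eqref{In}, Lemma \ref{T(f)}, and the comparison point $x'\in\rho\widetilde{Q}_j\setminus\Omega_\xi$), so that the weak endpoint is invoked only for the fully local piece $(f_1\mathbf{1}_{2\widetilde{Q}_j},f_2\mathbf{1}_{2\widetilde{Q}_j})$, whose $L^1$ masses are then controlled by $M_\mu f_i(x_0)\mu(Q(x_0,4\ell(\widetilde{Q}_j)))$ at a witness point $x_0$. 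Relatedly, your $(\infty,\infty)$ estimate quotes Lemma \ref{T(f)} for the full difference $|g^*_{\lambda,\mu,t_0}(x)-g^*_{\lambda,\mu,t_0}(x_k)|$, but that lemma only covers the scales $t\gtrsim\ell(Q)$; the range $t_0\le t\lesssim\ell(Q)$ must be bounded separately (the term $\mathcal{J}_1$ in the paper), again using the pointwise maximal-function control at $x$.
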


%%%%%%%%%%%%%%%%%%%%%%%%%%%%%%%%%%%%%%%%%%%%%%%%%%%%%%%
\noindent\textbf{Proof of Theorem $\ref{L^p}$.}
Without loss of generality, we may assume that $f_i \in L^{p_i}(\mu)$ has a compact support and is bounded.
It suffices to show that $g_{\lambda,\mu,t_0}^*$ is bounded on $L^p(\mu)$ uniformly in $t_0$.
The inequality $(\ref{Prior})$ gives a prior bound, $\big\|g_{\lambda,\mu,t_0}^* (f_1,f_2)\big\|_{L^p(\mu)} < \infty$.

Lemma $\ref{good-lambda}$ gives that
\begin{equation*}\aligned\label{theta/4}
&\mu \big(\{x;  g_{\lambda,\mu,t_0}^* (f_1,f_2)(x) > (1+\epsilon)\xi\}\big) \\
&\leq \Big( 1 - \frac{\theta}{16 \rho_0} \Big) \mu\big(\{x; g_{\lambda,\mu,t_0}^*(f_1,f_2)(x) > \xi\}\big)
+\big(\{x; M_{\mu}f_1(x) M_{\mu}f_2(x)> \delta \xi \}\big).
\endaligned
\end{equation*}
Note that
\begin{equation*}\label{distribution}
\big\| f \big\|_{L^r(\mu)}^r = r \int_{0}^\infty t^{r-1} \mu(\{x; |f(x)| > t \}) dt.
\end{equation*}
Consequently, it follows that
\begin{align*}
&\big\| g_{\lambda,\mu,t_0}^* (f_1,f_2) \big\|_{L^p(\mu)}^p
=(1+\epsilon)^p p \int_{0}^\infty \xi^{p-1} \mu(\{x; g_{\lambda,\mu,t_0}^*(f_1,f_2)(x)| > (1+\epsilon)\xi \}) d\xi \\
&\leq (1+\epsilon)^p \Big( 1 - \frac{\theta}{16 \rho_0} \Big) p \int_{0}^\infty \xi^{p-1} \mu\big(\{x; g_{\lambda,\mu,t_0}^*(f_1,f_2)(x) > \xi\}\big) d\xi \\
&\quad +(1+\epsilon)^p p \int_{0}^\infty \xi^{p-1} \mu \big(\{x; M_{\mu}f_1(x) M_{\mu}f_2(x) > \delta \xi \}\big) d\xi \\
&= (1+\epsilon)^p \Big( 1 - \frac{\theta}{16 \rho_0} \Big) \big\| g_{\lambda,\mu,t_0}^* (f_1,f_2) \big\|_{L^p(\mu)}^p
+ (1+\epsilon)^p \delta^{-p} \big\| M_{\mu}f_1 \cdot M_{\mu}f_2 \big\|_{L^p(\mu)}^p.
\end{align*}
Since $\big\|g_{\lambda,\mu,t_0}^* (f_1,f_2)\big\|_{L^p(\mu)} < \infty$, taking $\epsilon>0$ small enough, we deduce that
$$
\big\|g_{\lambda,\mu,t_0}^* (f_1,f_2)\big\|_{L^p(\mu)}
\lesssim_{\theta,\delta} \big\| M_{\mu}f_1 \big\|_{L^{p_1}(\mu)}  \big\| M_{\mu}f_2 \big\|_{L^{p_2}(\mu)}
\lesssim_{\theta,\delta}  \big\| f_1 \big\|_{L^{p_1}(\mu)} \big\| f_2 \big\|_{L^{p_2}(\mu)}.
$$
This shows Theorem $\ref{L^p}$. 
\qed
%%%%%%%%%%%%%%%%%%%%%%%%%%%%%%%%%%%%%%%%%%%%%%%%%%%%%%%%%%%%%%%%%

The following Whitney decomposition originated in \cite{T-3} is the foundation of Lemma $\ref{good-lambda}$.
\begin{lemma}\label{Whitney decomposition}
If $\Omega \subset \Rn$ is open, $\Omega \neq \Rn$, then $\Omega$ can be decomposed as
$
\Omega = \bigcup_{i \in I} Q_i
$
where $\{Q_i\}_{i \in I}$ are closed dyadic cubes with disjoint interiors such that for some
constants $\rho > 20$ and $\rho_0 \geq 1$ the following holds:
\begin{enumerate}
\item [(1)] $10 Q_i \subset \Omega$ for each $i \in I$;
\item [(2)] $\rho Q_i \cap \Omega^c \neq \emptyset$ for each $i \in I$;
\item [(3)] For each cube $Q_i$, there are at most $\rho_0$ cubes $Q_j$ such that $10 Q_i \cap 10 Q_j \neq \emptyset$.
Further, for such cubes $Q_i$, $Q_j$, we have $\ell(Q_i) \simeq \ell(Q_j)$.
\end{enumerate}
Moreover, if $\mu$ is a positive Radon measure on $\Rn$ and $\mu(\Omega)<\infty$, there is a family of cubes $\{\widetilde{Q}_j\}_{j \in S}$, with $S \subset I$,
so that $Q_j \subset \widetilde{Q}_j \subset 1.1 Q_j$ , satisfying the following:
\begin{enumerate}
\item [(a)] Each cube $\widetilde{Q}_j$, $j \in S$, is $(9, 2 \rho_0)$-doubling and has $\mathfrak{C}$-small boundary.
\item [(b)] The collection $\{\widetilde{Q}_j \}_{j \in S}$ is pairwise disjoint.
\item [(c)] it holds that
\begin{equation}\label{j-S}
\mu \Big( \bigcup_{j \in S} \widetilde{Q}_j \Big) \geq \frac{1}{8 \rho_0} \mu(\Omega).
\end{equation}
\end{enumerate}
\end{lemma}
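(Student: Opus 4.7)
The plan is to build the decomposition in two stages: first the classical Whitney cubes $\{Q_i\}$ with properties (1)–(3), and then refine a selected subcollection to cubes $\widetilde{Q}_j$ that are simultaneously doubling, have small boundary, and are pairwise disjoint while still capturing a fixed fraction of $\mu(\Omega)$.

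For the first stage I would use the standard construction. For $x \in \Omega$ set $d(x) = \dist(x, \Omega^c)$ and take the family of maximal dyadic cubes $Q$ satisfying $10 Q \subset \Omega$. These cubes have pairwise disjoint interiors and cover $\Omega$, so (1) is automatic. If one chooses $\rho > 20$ large enough, maximality forces the dyadic parent of $Q_i$ to fail the condition, which in turn gives $\rho Q_i \cap \Omega^c \neq \emptyset$, hence (2). The bounded overlap (3) is the classical Whitney lemma: any two cubes $Q_i,Q_j$ with $10 Q_i \cap 10 Q_j \neq \emptyset$ must have comparable side lengths because their distances to $\Omega^c$ are comparable, and only finitely many dyadic cubes of a given comparable size can meet a fixed one in $\R^n$.

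For the refined family I would first construct, for every $i$, a cube $\widetilde{Q}_i$ with $Q_i \subset \widetilde{Q}_i \subset 1.1 Q_i$ that is both $(9,2\rho_0)$-doubling and has $\mathfrak{C}$-small boundary. Consider concentric cubes $Q_i^{(s)}$ of side length $s \in [\ell(Q_i), 1.1\ell(Q_i)]$. For the small-boundary property, a Fubini/averaging computation shows
\[
\int_{\ell(Q_i)}^{1.1\ell(Q_i)} \mu\bigl(\{x \in 2 Q_i^{(s)} : \dist(x,\partial Q_i^{(s)}) \leq \xi s\}\bigl)\, \frac{ds}{s} \lesssim \xi\, \mu(2.2 Q_i),
\]
so for a $\xi$-independent, $\mathfrak{C}$-large majority of parameters $s$ the corresponding cube satisfies the small-boundary condition. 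For the doubling property, I would argue by contradiction along a sparse geometric subsequence of side lengths in $[\ell(Q_i), 1.1\ell(Q_i)]$: if the doubling inequality $\mu(9\widetilde{Q})\leq 2\rho_0\mu(\widetilde{Q})$ fails at every scale, iterating the failure forces $\mu(2.2 Q_i)$ to be infinite, contradicting the power-bound (equivalently local finiteness) of $\mu$. Choosing the geometric sequence sparse enough that the union of ``bad for boundary'' parameters does not exhaust it, one finds a common good $s$ producing the desired $\widetilde{Q}_i$.

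The final step is the extraction of the pairwise disjoint subfamily $\{\widetilde{Q}_j\}_{j\in S}$ covering at least $\mu(\Omega)/(8\rho_0)$. Because $\widetilde{Q}_j \subset 1.1 Q_j$, two refined cubes can overlap only when the underlying Whitney cubes are ``neighbors'' in the sense of (3), so each $\widetilde{Q}_i$ meets at most $\rho_0$ others. A greedy/Vitali selection then yields $S$ pairwise disjoint with $\sum_{j\in S}\mathbf{1}_{\widetilde{Q}_j} \gtrsim \rho_0^{-1}\sum_{i\in I}\mathbf{1}_{\widetilde{Q}_i}$, and the doubling property (applied to pass from $Q_i$ to $\widetilde{Q}_i$ and to absorb the mild $1.1$-enlargement) gives the numerical constant $1/(8\rho_0)$ in (\ref{j-S}). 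The main obstacle is the simultaneous requirement of doubling and small boundary in (a): the small-boundary argument wants freedom to slide the side length along a continuous range, while the doubling argument wants a sparse geometric sequence of scales. The balance is achieved by the measure-comparison above, which shows that the set of bad $s$ for small boundary is of controlled proportion, so a sparse geometric sequence must contain a scale good for both properties. Once the per-cube construction is in hand, the disjoint selection argument is essentially a weighted Vitali covering argument and is standard.
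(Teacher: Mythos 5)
The paper does not actually prove this lemma; it is quoted from Tolsa's book \cite{T-3}, so I am judging your argument on its own terms. The first stage (maximal dyadic cubes with $10Q\subset\Omega$, properties (1)--(3)) is the standard Whitney construction and is fine, and your Fubini/averaging argument for the small--boundary property is also the standard one (with the minor caveat that the displayed estimate for a single $\xi$ must be summed over dyadic $\xi=2^{-k}$ to produce a cube that is $\mathfrak{C}$--small--boundary for \emph{all} $\xi>0$ simultaneously; this is routine).

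The genuine gap is in the doubling step. You propose to produce, \emph{for every} $i$, a $(9,2\rho_0)$--doubling cube $\widetilde{Q}_i$ with $Q_i\subset\widetilde{Q}_i\subset 1.1Q_i$ by ``iterating the failure'' of doubling along a geometric sequence of side lengths inside $[\ell(Q_i),1.1\ell(Q_i)]$. This iteration cannot be run: if $\mu(9Q^{(s_1)})>2\rho_0\,\mu(Q^{(s_1)})$ and $\mu(9Q^{(s_2)})>2\rho_0\,\mu(Q^{(s_2)})$ with $s_1<s_2\le 1.1\,s_1$, the two inequalities do not chain, because $9Q^{(s_1)}\not\subset Q^{(s_2)}$ (one needs $s_2\ge 9s_1$ to nest, which leaves the interval $[\ell,1.1\ell]$ at the first step). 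The usual iteration that produces doubling cubes passes to $9^kQ$ and hence to cubes far larger than $1.1Q_i$; within the fixed narrow range $[\ell(Q_i),1.1\ell(Q_i)]$ it is perfectly possible that \emph{no} enlargement is $(9,2\rho_0)$--doubling (e.g.\ when $\mu$ puts almost all of the mass of $10Q_i$ outside $2Q_i$). Note also that the lemma assumes only that $\mu$ is Radon with $\mu(\Omega)<\infty$, so no power bound is available to you here. This is precisely why the conclusion is stated only for a subfamily $S\subset I$: the indices $i$ for which no admissible $\widetilde{Q}_i$ exists must be \emph{discarded}, not repaired. The correct mechanism is global rather than local: if for some $i$ every $\widetilde{Q}\subset 1.1Q_i$ fails the doubling condition, then in particular $\mu(Q_i)\le\mu(\widetilde{Q})<(2\rho_0)^{-1}\mu(9\widetilde{Q})\le(2\rho_0)^{-1}\mu(10Q_i)$, and summing over such ``bad'' $i$ and using the bounded overlap $\sum_i\mathbf{1}_{10Q_i}\le\rho_0\mathbf{1}_\Omega$ from property (3) gives $\sum_{i\ \mathrm{bad}}\mu(Q_i)\le(2\rho_0)^{-1}\sum_i\mu(10Q_i)\le\tfrac12\mu(\Omega)$, so the good indices still carry at least half the measure of $\Omega$; the Vitali-type disjointification you describe then costs the remaining factor of $\rho_0$. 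Your final selection step is otherwise sound, but as written your plan both overclaims (an enlargement for every $i$) and rests on an iteration that is geometrically impossible.
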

%%%%%%%%%%%%%%%%%%%%%%%%%%%%%%%%%%%%%%%%%%%%%%%%%%%%%%%%%%%%%%%%%%%%%%
\noindent\textbf{Proof of Lemma $\ref{good-lambda}$.}
%%%%%%%%%%%%%%%%%%%%%%%%%%%%%%%%%%%%%%%%%%%%%%%%
Applying Lemma $\ref{Whitney decomposition}$, one can get a family of dyadic cubes $\{Q_i\}_{i \in I}$ with disjoint interior such that $\Omega_{\xi} = \bigcup_{i \in I}Q_i$ and $\rho
Q_i \cap \Omega_{\xi}^c \neq \emptyset$. The collection $\{\widetilde{Q}_j \}_{j \in S}$ satisfies all properties of lemma. From the assumption in Theorem $\ref{L^p}$ and the fact
that the cubes $\{\widetilde{Q}_j\}_{j \in S}$ have $\mathfrak{C}$-small boundary and are $(9, 2 \rho_0)$-doubling, it follows that there exists subset $G_j \subset \widetilde{Q}_j$
with $\mu(G_j) \geq \theta \mu(\widetilde{Q}_j)$ such that $g_{\lambda}^* : \mathfrak{M}(\Rn) \times \mathfrak{M}(\Rn) \rightarrow L^{\frac12,\infty}(\mu \lfloor G_j)$, with norm
bounded uniformly on $j \in S$.
By the inequality $(\ref{j-S})$, we have
\begin{align*}
\mathscr{F}
&:=\mu \big(\big\{x;g_{\lambda,\mu,t_0}^*(f_1,f_2)(x) > (1+\epsilon)\xi, M_{\mu}f_1(x) M_{\mu}f_2(x) \leq \delta \xi  \big\}\big) \\
&\leq \mu \Big(\Omega_{\xi} \setminus \bigcup_{j \in S} \widetilde{Q}_j \Big) +  \sum_{j \in S} \mu(\widetilde{Q}_j \setminus G_j)
+ \sum_{j \in S} \mu (E_j) \\
&\leq \Big(1-\frac{\theta}{8 \rho_0}\Big) \mu(\Omega_\xi)  +
\sum_{j \in S} \mu (E_j),
\end{align*}
where $E_j:=\big\{x \in G_j;g_{\lambda,\mu,t_0}^*(f_1,f_2)(x) > (1+\epsilon)\xi, M_{\mu}f_1(x) M_{\mu}f_2(x) \leq \delta \xi  \big\}$.
To bound $\mathscr{F}$, we will prove that
\begin{equation}\label{subset}
E_j \subset \big\{x \in \widetilde{Q}_j; g_{\lambda,\mu,t_0}^*(f_1 \mathbf{1}_{2 \widetilde{Q}_j}, f_2 \mathbf{1}_{2 \widetilde{Q}_j})(x) > \epsilon \xi/2 \big\}.
\end{equation}
Once $(\ref{subset})$ is obtained, we by weak type bound deduce that
\begin{align*}
\mu(E_j)
&\leq \mu \big(\big\{x \in G_j;  g_{\lambda,\mu,t_0}^*(f_1 \mathbf{1}_{2 \widetilde{Q}_j}, f_2 \mathbf{1}_{2 \widetilde{Q}_j})(x) > \epsilon \xi/2 \big\}\big) 
\\%%%%%%%%%%%%%%
&\leq \frac{c}{(\epsilon \xi)^{1/2}} \prod_{i=1}^2 \bigg(\int_{2 \widetilde{Q}_j} |f_i| d\mu\bigg)^{1/2}.
\end{align*}
We may assume that there exists $x_0 \in \widetilde{Q}_j$ such that $M_{\mu}f_1(x_0) M_{\mu}f_2(x_0)\leq \delta \xi$, then
\begin{align*}
\mu(E_j)
&\leq \frac{c}{(\epsilon \xi)^{1/2}} \prod_{i=1}^2 \bigg(\int_{Q(x_0,4\ell(\widetilde{Q}_j))} |f| d\mu \bigg)^{1/2} \\
&\leq \frac{c}{(\epsilon \xi)^{1/2}} \mu\big(Q(x_0,4\ell(\widetilde{Q}_j))\big) M_{\mu} f_1(x_0)^{1/2} M_{\mu} f_2(x_0)^{1/2}  \\
&\leq c \delta^{1/2} \epsilon^{-1/2} \mu(10 Q_j)
\leq 2c \rho_0 \delta^{1/2} \epsilon^{-1/2} \mu(Q_j).
\end{align*}
Hence, we have
$$
\mathscr{F}
\leq \Big(1-\frac{\theta}{8 \rho_0}\Big) \mu(\Omega_\xi) + \widetilde{c} \delta^{1/2} \epsilon^{-1/2} \sum_{j \in S} \mu(Q_j)
\leq \Big(1-\frac{\theta}{16 \rho_0}\Big) \mu(\Omega_\xi),
$$
if we choose $\delta=\delta(\epsilon)$ small enough.

We now turn to demonstrate $(\ref{subset})$. Set $x \in \widetilde{Q}_j$ satisfying $g_{\lambda,\mu,t_0}^*(f_1,f_2)(x) > (1+\epsilon)\xi$ and
$M_{\mu}f_1(x) M_{\mu}f_2(x) \leq \delta \xi$. It is enough to show
\begin{equation}\label{2-Qi}
g_{\lambda,\mu,t_0}^*(f_1 \mathbf{1}_{2 \widetilde{Q}_j}, f_2 \mathbf{1}_{2 \widetilde{Q}_j})(x) > \epsilon \xi/2.
\end{equation}
By sub-linear property, it is enough to control
\begin{equation}\label{Rn-2Qj}
g_{\lambda,\mu,t_0}^*(f_1 \mathbf{1}_{\Rn \setminus 2 \widetilde{Q}_j}, f_2)(x)
+ g_{\lambda,\mu,t_0}^*(f_1 \mathbf{1}_{2 \widetilde{Q}_j}, f_2 \mathbf{1}_{\Rn \setminus 2 \widetilde{Q}_j})(x)
\leq (1+\epsilon/2) \xi.
\end{equation}

To analyze the contribution of $g_{\lambda,\mu,t_0}^*(f_1 \mathbf{1}_{\Rn \setminus 2 \widetilde{Q}_j}, f_2)(x)$, take $x' \in \rho \widetilde{Q}_j \setminus \Omega_{\xi}$.
We may assume that $t_0 < 2 \rho \ell(\widetilde{Q}_j)$. Then $g_{\lambda,\mu,t_0}^*(f_1,f_2)(x') \leq \xi$ and
\begin{equation}\label{N-N-N}
g_{\lambda,\mu,t_0}^*(f \mathbf{1}_{\Rn \setminus 2 \widetilde{Q}_j}, f_2)(x)
\leq \mathcal{J}_1 + \mathcal{J}_2 + \mathcal{J}_3 + \mathcal{J}_4,
\end{equation}
where
\begin{align*}
\mathcal{J}_1
&:= \bigg(\int_{0}^{2 \rho \ell(\widetilde{Q}_j)} \int_{\Rn} \Big(\frac{t}{t + |x - y|}\Big)^{m \lambda}
|\Theta_t^\mu (f_1 \mathbf{1}_{\Rn \setminus 2 \widetilde{Q}_j}, f_2)(y)|^2
\frac{d\mu(y) dt}{t^{m+1}}\bigg)^{1/2}, \\
\mathcal{J}_2
&:= \bigg(\int_{2 \rho \ell(\widetilde{Q}_j)}^{\infty} \int_{\Rn} \Big(\frac{t}{t + |x' - y|}\Big)^{m \lambda}
|\Theta_t^\mu (f_1,f_2)(y)|^2
\frac{d\mu(y) dt}{t^{m+1}}\bigg)^{1/2}, \\
\mathcal{J}_3
&:= \bigg(\int_{2 \rho \ell(\widetilde{Q}_j)}^{\infty} \int_{\Rn} \Big(\frac{t}{t + |x' - y|}\Big)^{m \lambda}
|\Theta_t^\mu (f_1 \mathbf{1}_{2 \widetilde{Q}_j}, f_2)(y)|^2
\frac{d\mu(y) dt}{t^{m+1}}\bigg)^{1/2}, \\
\mathcal{J}_4
&:= \bigg| \bigg(\int_{2 \rho \ell(\widetilde{Q}_j)}^{\infty} \int_{\Rn} \Big(\frac{t}{t + |x' - y|}\Big)^{m \lambda}
|\Theta_t^\mu (f_1 \mathbf{1}_{\Rn \setminus 2 \widetilde{Q}_j}, f_2)(y)|^2
\frac{d\mu(y) dt}{t^{m+1}}\bigg)^{1/2} \\
&{} \qquad-  \bigg(\int_{2 \rho \ell(\widetilde{Q}_j)}^{\infty} \int_{\Rn} \Big(\frac{t}{t + |x - y|}\Big)^{m \lambda}
|\Theta_t^\mu (f_1 \mathbf{1}_{\Rn \setminus 2 \widetilde{Q}_j}, f_2)(y)|^2 \frac{d\mu(y) dt}{t^{m+1}}\bigg)^{1/2} \bigg|.
\end{align*}
From Lemma $\ref{U(f)}$ and the inequality
\begin{equation}\label{Out}
\int_{\Rn \setminus 2 \widetilde{Q}_j}\frac{t^{\alpha/4} |f_2(z_2)|}{(t+|x-z_2|)^{m+\alpha/4}} d\mu(z_2)
\lesssim \min\big\{1, t^{\alpha/4} \ell(\widetilde{Q}_j)^{-\alpha/4}\big\} M_{\mu}(f_2)(x),
\end{equation}
it follows that
\begin{align*}
\mathcal{J}_1
&\lesssim \bigg(\int_{0}^{2 \rho \ell(\widetilde{Q}_j)} \mathscr{U}_t(f_1 \mathbf{1}_{\Rn \setminus 2 \widetilde{Q}_j}, f_2)(x)^2 \frac{dt}{t} \bigg)^{1/2} \\
&\lesssim \bigg(\int_{0}^{2 \rho \ell(\widetilde{Q}_j)} \frac{t^{\alpha/4}}{\ell(\widetilde{Q}_j)^{\alpha/4}} \frac{dt}{t} \bigg)^{1/2}  M_{\mu}(f_1)(x) M_{\mu}(f_2)(x) \\
&\lesssim M_{\mu}(f_1)(x) M_{\mu}(f_2)(x)
\leq \delta \xi.
\end{align*}
Since $t_0 < 2 \rho \ell(\widetilde{Q}_j)$,
$$
\mathcal{J}_2 \leq g_{\lambda,\mu,t_0}^*(f_1,f_2)(x') \leq \xi.
$$
Moreover, Lemma $\ref{U(f)}$ and the following inequality
\begin{equation}\label{In}
\int_{2 \widetilde{Q}_j}\frac{t^{\alpha/4} |f_1(z_1)|}{(t+|x-z_1|)^{m+\alpha/4}} d\mu(z_1)
\lesssim \min\big\{1, t^{-m} \mu(2 \widetilde{Q}_j)\big\} M_{\mu}(f_1)(x),
\end{equation}
indicate that
\begin{align*}
\mathcal{J}_3
&\lesssim \bigg(\int_{2 \rho \ell(\widetilde{Q}_j)}^{\infty} \mathscr{U}_t(f_1 \mathbf{1}_{2 \widetilde{Q}_j}, f_2)(x)^2 \frac{dt}{t} \bigg)^{1/2} \\
&\lesssim \bigg(\int_{2 \rho \ell(\widetilde{Q}_j)}^{\infty} \frac{\mu(2 \widetilde{Q}_j)}{t^{2m}} \frac{dt}{t} \bigg)^{1/2}  M_{\mu}(f_1)(x) M_{\mu}(f_2)(x) \\
&\lesssim M_{\mu}(f_1)(x) M_{\mu}(f_2)(x)
\leq \delta \xi.
\end{align*}
From the sub-linearity and Lemma $\ref{T(f)}$, it follows that
$$
\mathcal{J}_4
\leq \mathscr{T}(f_1^{\infty},f_2^0)(x) + \mathscr{T}(f_1^{\infty},f_2^{\infty})(x)
\lesssim M_{\mu}(f_1)(x) M_{\mu}(f_2)(x)
\leq \delta \xi.
$$

Next, we consider the contribution of $g_{\lambda,\mu,t_0}^*(f_1 \mathbf{1}_{2 \widetilde{Q}_j}, f_2 \mathbf{1}_{\Rn \setminus 2 \widetilde{Q}_j})(x)$.
Combining Lemma $\ref{U(f)}$ with the estimates $(\ref{Out})$ and $(\ref{In})$, we deduce that
\begin{align*}
&g_{\lambda,\mu,t_0}^*(f_1 \mathbf{1}_{2 \widetilde{Q}_j}, f_2 \mathbf{1}_{\Rn \setminus 2 \widetilde{Q}_j})(x)  \\
&\quad \leq \bigg(\int_{0}^{\infty} \mathscr{U}_t(f_1 \mathbf{1}_{2 \widetilde{Q}_j}, f_2 \mathbf{1}_{\Rn \setminus 2 \widetilde{Q}_j})(x)^2 \frac{dt}{t} \bigg)^{1/2} \\
&\quad\lesssim \bigg(\int_{0}^{\infty} \min\bigg\{\frac{\mu(2 \widetilde{Q}_j)}{t^{2m}}, \frac{t^{\alpha/4}}{\ell(\widetilde{Q}_j)^{\alpha/4}} \bigg\} \frac{dt}{t} \bigg)^{1/2} \\
&\quad\leq \bigg(\int_{0}^{\ell(\widetilde{Q}_j)} \frac{t^{\alpha/4}}{\ell(\widetilde{Q}_j)^{\alpha/4}} \frac{dt}{t}
+ \int_{\ell(\widetilde{Q}_j)}^{\infty} \frac{\mu(2 \widetilde{Q}_j)}{t^{2m}} \frac{dt}{t} \bigg)^{1/2} M_{\mu}(f_1)(x) M_{\mu}(f_2)(x)\\
&\quad \lesssim M_{\mu}(f_1)(x) M_{\mu}(f_2)(x)
\leq \delta \xi.
\end{align*}
Consequently, the above estimates indicate the inequality $(\ref{Rn-2Qj})$ holds for small enough $\delta=\delta(\epsilon)$. 
\qed

%%%%%%%%%%%%%%%%%%%%%%%%%%%%%%%%%%%%%%%%%%%%%%%%%%%%%%%%%%%%%%%%%%%%%%%%%%%%%%%%%%

%%%%%%%%%%%%%%%%%%%%%%%%%%%%%%%%%%%%%%%%%%%%%%%%%%%%%%%%%%%%%%%%%%%%%%%%%%%%%%%%%%
\section{Big Piece Bilinear Local $T1$ Theorem}\label{Sec-big}
In this section, we will prove Theorem $\ref{Big piece}$. In the proof, a probabilistic reduction and the martingale decomposition are essential. The fundamental tools we need are random dyadic grid and good cube, which can be found in \cite{Ht, NTV-02, NTV-03}.
%%%%%%%%%%%%%%%%%%%%%%%%%%%%%%%%%%%%%%%%%%%%%%%%%%%%%%%%
\subsection{Random dyadic grids and good/bad cubes}
Let $\D_0$ be the standard dyadic grids on $\Rn$.
That is,
$$
\D_0 := \bigcup_{k \in \Z}\D_0^{k}, \ \ \D_0^k := \big\{2^{k}([0, 1)^n + m); k \in \Z, \ m \in \Z^n \big\}.
$$
For a binary sequence $w = \{ w_j \}_{j \in \Z} \in \Omega := (\{0,1\}^n)^\Z$, we define we define the new dyadic grid
$$
\D_{w}^k := \Big\{I + w := I + \sum_{j:j<k} 2^{j} w_j; I \in \D_0^k \Big\}.
$$
Then we will get the general dyadic systems of the form
$$
\D_w := \bigcup_{k \in \Z} \D^k_{w}.$$
There is a natural product probability structure on $\Omega$.
%%%%%%%%%%%%%%%%%%%%%%%%%%%%%%%%%%%%%%%%%%%%%%%%%%%%%%%%

A cube $I \in \D$ is said to be good if there exists a $J \in \D$ with $\ell(J) \geq 2^r \ell(I)$ such that $\dist(I,\partial J) > \ell(I)^{\gamma} \ell(J)^{1-\gamma}$. Otherwise, $I$
is called bad.
Here $r \in \Z_+$ is a fixed large enough parameter, and $\gamma =\frac{\alpha}{2(m+\alpha)}$.

%%%%%%%%%%%%%%%%%%%%%%%%%%%%%%%%%%%%%%%%%%%%%%%%%%%%%%%%%
\subsection{Martingale difference operators}
Let us introduce the martingale difference operator as follows :
$$
\Delta_Q f = \sum_{Q' \in \ch(Q)} \big(\langle f \rangle_{Q'} - \langle f \rangle_{Q} \big) \mathbf{1}_{Q'}.
$$
We define the average operators :
$$
E_Q f = \langle f \rangle_Q \mathbf{1}_Q \quad\text{and}\quad 
E_{2^k}f = \sum_{Q \in \mathcal{D}, \ell(Q) = 2^k} E_Q f.
$$
Then there holds for any $s \in \Z$
\begin{eqnarray}
\label{f} f &=& \sum_{\substack{I \in \mathcal{D} \\ \ell(Q) \leq 2^s}} \Delta_Q f + \sum_{\substack{I \in \mathcal{D} \\ \ell(Q) = 2^s}} E_{Q} f, \ \
\text{in } L^2(\mu) \text{ and } \mu-a.e.. \\
\label{E-f} E_{2^k}f &=& \sum_{\substack{I \in \mathcal{D} \\ 2^k < \ell(Q) \leq 2^s}} \Delta_Q f + \sum_{\substack{I \in \mathcal{D} \\ \ell(Q) = 2^s}} E_{Q} f.
\end{eqnarray}

%%%%%%%%%%%%%%%%%%%%%%%%%%%%%%%%%%%%%%%%%%%%%%%%%%%%%%%%%%%%%%%%%%%%%%%%
After preliminaries, we turn to showing Theorem $\ref{Big piece}$.

First of all, we prove the existence of $G_Q$ in Theorem $\ref{Big piece}$.
Set $G_Q := Q \setminus (H_Q \cup S_Q)$, 
\begin{equation}\label{SQ}
S_{Q} :=\big\{x \in Q; g_{\lambda,\mu,Q}^*(1_Q,1_Q)(x) > \zeta_0 \big\} 
\quad \text{and}\quad \zeta_0^{p_0}=\frac{2C_0}{1-\delta_0}.
\end{equation}
Using the weak type assumption $(\ref{Weak})$, we have
$$
\mu(G_Q)
\geq \mu(Q) - \mu(H_Q) - \mu(S_Q \setminus H_Q)
\geq \bigg(1 - \delta_0 - \frac{C_0}{\zeta_0^{p_0}}\bigg) \mu(Q)
=\frac{1-\delta_0}{2} \mu(Q).
$$
%%%%%%%%%%%%%%%%%%%%%%%%%%%%%%%%%%%%%%%%%%%%%%%%%%%%%%%%%%%%%%%%%%%%%%
\subsection{Back to the global testing condition}
By size condition, it yields that
$$
|\theta_{t}^{\mu}(\mathbf{1}_Q,\mathbf{1}_Q)(y)| \lesssim \frac{\mu(Q)^2}{t^{2m}},
$$
which indicates that
\begin{align*}
G_{\infty}(x)
&:=\bigg(\int_{\ell(Q)}^{\infty}\int_{\Rn} \Big(\frac{t}{t+|x-y|}\Big)^{m\lambda} |\theta_t^\mu (\mathbf{1}_Q,\mathbf{1}_Q)(y)|^2 \frac{d\mu(y)}{t^m}\frac{dt}{t}\bigg)^{1/2} \\
&\leq C_1 \mu(Q)^2 \bigg(\int_{\ell(Q)}^{\infty} \frac{1}{t^{4m}} \frac{dt}{t}\bigg)^{1/2}
\leq C_2 \frac{\mu(Q)^2}{\ell(Q)^{2m}} \leq C_3.
\end{align*}
Accordingly, we get
\begin{align*}
&\sup_{\zeta > 0} \zeta^{p_0} \mu \big(\{x \in Q \setminus H_Q; g_{\lambda,\mu}^*(\mathbf{1}_Q,\mathbf{1}_Q)(x) > \zeta \}\big) \\
&\leq \sup_{\zeta > 0} \zeta^{p_0} \mu \big(\{x \in Q \setminus H_Q; g_{\lambda,\mu,Q}^*(\mathbf{1}_Q,\mathbf{1}_Q)(x) > \zeta/2 \}\big) \\
&\quad + \sup_{\zeta > 0} \zeta^{p_0} \mu \big(\{x \in Q \setminus H_Q; G_{\infty}(x) > \zeta/2 \}\big) \\
&\leq 2^{p_0}(C_0 + C_3^{p_0}) \mu(Q)
:= \widetilde{C}_0 \mu(Q).
\end{align*}
This is equivalent to
\begin{equation}\label{mu-Q}
\sup_{\zeta > 0} \zeta^{p_0} \mu \lfloor Q \big(\{x \in \Rn \setminus H_Q; g_{\lambda,\mu \lfloor Q}^*(\mathbf{1},\mathbf{1})(x) > \zeta \}\big)
\leq \widetilde{C}_0 \mu \lfloor Q(\Rn).
\end{equation}
Moreover, the desired result is
\begin{equation}\label{1-G}
\big\| \mathbf{1}_G g_{\lambda,\mu \lfloor Q}^*(\vec{f}) \big\|_{L^p(\mu \lfloor Q)}
\lesssim \prod_{i=1}^2 ||f_i||_{L^{p_i}(\mu \lfloor Q)},\ \text{ for each } f_i \in L^{p_i}(\mu).
\end{equation}
Therefore, we are reduced to demonstrating $(\ref{mu-Q})$ implies $(\ref{1-G})$ for $\mu$ replacing $\mu \lfloor Q$.

%%%%%%%%%%%%%%%%%%%%%%%%%%%%%%%%%%%%%%%%%%%%%%%%%%%%%%%%%%%%%%%%%%%%%%%%

%%%%%%%%%%%%%%%%%%%%%%%%%%%%%%%%%%%%%%%%%%%%%%%%%%%%%%%%%%%%%%%%%%%%%%%%
\subsection{Reductions}
In this subsection, our goal is to reduce the proof of $(\ref{1-G})$.
\subsubsection{\textbf{Discarding bad cubes.}}
We may assume that $\big\| \mathbf{1}_{G} g_{\lambda,\mu}^* \big\|_{L^p(\mu)} < \infty$, which can be got applying the similar argument in Proposition $3.1$ \cite{CX-2}. For
convenience, we denote
$$
\psi(x,t):= \bigg(\int_{\Rn} \Big(\frac{t}{t+|x-y|}\Big)^{m\lambda} |\Theta_t^{\mu} (\vec{f})(y)|^2 \frac{d\mu(y)}{t^m}\bigg)^{1/2}.
$$
Then we have
\begin{align*}
\big\| \mathbf{1}_G g_{\lambda,\mu}^*(\vec{f}) \big\|_{L^p(\mu)}
&=\bigg\| \mathbf{1}_G \bigg(\sum_{R \in \mathcal{D}_w} \mathbf{1}_{R} \int_{\ell(R)/2}^{\ell(R)} \psi(x,t)
\frac{dt}{t}\bigg)^{1/2} \bigg\|_{L^p(\mu)} \\
&\leq \mathbb{E}_w \bigg\| \mathbf{1}_G \bigg(\sum_{\substack{R \in \mathcal{D}_w \\ R: good}} \mathbf{1}_{R} \int_{\ell(R)/2}^{\ell(R)} \psi(x,t)
\frac{dt}{t}\bigg)^{1/2} \bigg\|_{L^p(\mu)} \\
&\quad +  \mathbb{E}_w \bigg\| \mathbf{1}_G \bigg(\sum_{\substack{R \in \mathcal{D}_w \\ R : bad}} \mathbf{1}_{R} \int_{\ell(R)/2}^{\ell(R)} \psi(x,t)
\frac{dt}{t}\bigg)^{1/2} \bigg\|_{L^p(\mu)} \\
&:=\Sigma_{good} + \Sigma_{bad}.
\end{align*}
Now we show the following :
\begin{equation}\label{Bad-1}
\Sigma_{bad} \leq 1/2 \big\| \mathbf{1}_G g_{\lambda,\mu}^*(\vec{f}) \big\|_{L^p(\mu)}.
\end{equation}
It follows from H\"{o}lder inequality that
\begin{align*}
\Sigma_{bad}
\leq \bigg\{\int_{\Rn} \mathbf{1}_G(x) \mathbb{E}_w \bigg(\sum_{\substack{R \in \mathcal{D}_w \\ R : bad}} \mathbf{1}_{R}(x)
\int_{\ell(R)/2}^{\ell(R)} \psi(x,t) \frac{dt}{t}\bigg)^{p/2} \bigg\}^{1/p}.
\end{align*}
Thus, it suffices to prove
\begin{equation}\label{Bad-2}
\mathbb{E}_w \bigg(\sum_{\substack{R \in \mathcal{D}_w \\ R : bad}} \mathbf{1}_{R}(x) \int_{\ell(R)/2}^{\ell(R)} \psi(x,t) \frac{dt}{t}\bigg)^{p/2}
\leq \frac12 \bigg(\int_{0}^{\infty} |\psi(x,t)|^2 \frac{dt}{t} \bigg)^{p/2}.
\end{equation}
Note that
$$
\mathbb{E}_w(\mathbf{1}_{bad}(R+w)) \leq c(r) \rightarrow 0 \text{ as } r \rightarrow \infty.
$$
The result can be found in \cite{NTV-03}.
If $p \leq 2$, Jensen's inequality implies that
\begin{equation}\aligned\label{p<2}
&\mathbb{E}_w \bigg(\sum_{\substack{R \in \mathcal{D}_w \\ R : bad}} \mathbf{1}_{R}(x) \int_{\ell(R)/2}^{\ell(R)}
|\psi(x,t)|^2 \frac{dt}{t}\bigg)^{p/2} \\
&\leq \bigg(\mathbb{E}_w \sum_{\substack{R \in \mathcal{D}_w \\ R : bad}} \mathbf{1}_{R}(x) \int_{\ell(R)/2}^{\ell(R)}
|\psi(x,t)|^2 \frac{dt}{t}\bigg)^{p/2} \\
&= \bigg(\sum_{R \in \mathcal{D}_0} \mathbb{E}_w(\mathbf{1}_{bad}(R+w)) \mathbb{E}_w \bigg(\mathbf{1}_{R+w}(x) \int_{\ell(R)/2}^{\ell(R)}
|\psi(x,t)|^2 \frac{dt}{t}\bigg)\bigg)^{p/2} \\
&\leq c(r)^{p/2} \bigg(\int_{0}^{\infty} |\psi(x,t)|^2 \frac{dt}{t} \bigg)^{p/2}.
\endaligned
\end{equation}
If $p > 2$, we have
\begin{align*}
&\mathbb{E}_w \bigg(\sum_{\substack{R \in \mathcal{D}_w \\ R : bad}} \mathbf{1}_{R}(x) \int_{\ell(R)/2}^{\ell(R)}
|\psi(x,t)|^2 \frac{dt}{t}\bigg)^{p/2} \\
&=\mathbb{E}_w \bigg(\sum_{\substack{R \in \mathcal{D}_w \\ R : bad}} \mathbf{1}_{R}(x) \int_{\ell(R)/2}^{\ell(R)}
|\psi(x,t)|^2 \frac{dt}{t}\bigg)
\bigg(\sum_{\substack{R \in \mathcal{D}_w \\ R : bad}} \mathbf{1}_{R}(x) \int_{\ell(R)/2}^{\ell(R)}
|\psi(x,t)|^2 \frac{dt}{t}\bigg)^{p/2-1} \\
&\leq \mathbb{E}_w \bigg(\sum_{\substack{R \in \mathcal{D}_w \\ R : bad}} \mathbf{1}_{R}(x) \int_{\ell(R)/2}^{\ell(R)}
|\psi(x,t)|^2 \frac{dt}{t}\bigg) \bigg(\int_{0}^{\infty} |\psi(x,t)|^2 \frac{dt}{t} \bigg)^{p/2-1}\\
&\leq c(r) \bigg(\int_{0}^{\infty} |\psi(x,t)|^2 \frac{dt}{t} \bigg)^{p/2},
\end{align*}
where we used the conclusion $(\ref{p<2})$ for $p=2$.
Therefore, by taking large enough $r$, we obtain $(\ref{Bad-2})$ and $(\ref{Bad-1})$, which gives that
$$
\big\| \mathbf{1}_G g_{\lambda,\mu}^*(\vec{f}) \big\|_{L^p(\mu)}
\leq 2 \Sigma_{good}.
$$

With the monotone convergence theorem, it is enough to deduce that there exists a constant $C > 0$ such that for any $s \in \N$ and $w \in \Omega$, we have
$$
\bigg\| \mathbf{1}_G \bigg(\sum_{\substack{R \in \mathcal{D}_w, \ell(R) \leq 2^s \\ R: good}}
\mathbf{1}_{R} \int_{\ell(R)/2}^{\ell(R)} |\psi(x,t)|^2 \frac{dt}{t}\bigg)^{1/2} \bigg\|_{L^p(\mu)}
\leq C \prod_{i=1}^2 ||f_i||_{L^{p_i}(\mu)}.
$$
From now on, $w$ is fixed, simply denote $\mathcal{D}_{good}=\{R; R \in \D_w, R \text{ is good}\}$,
$s_{t,G}(y,z_1,z_2)=s_{t,G}(y,z_1,z_2) \mathbf{1}_{\Rn \setminus G}(x)$, and
$\Theta_{t,G}^{\mu} (\vec{f})(y)=\int_{\R^{2n}} s_{t,G}(y,z_1,z_2) f_1(z_1) f_2(z_2) d\mu(z_1) d\mu(z_2)$.
It is easy to check that $s_{t,G}$ satisfies the Size condition and H\"{o}lder conditions.
We are to reduced to showing that
\begin{align*}
\bigg\|\bigg(\sum_{\substack{R \in \mathcal{D}_{good}\\ \ell(R) \leq 2^s}}
\mathbf{1}_{R} \int_{\ell(R)/2}^{\ell(R)} \int_\Rn \vartheta_t(\cdot,y)
\Theta^{\mu}_{t,G}(f_1, f_2)(y) \Big|^2 \frac{d\mu dt}{t^{m+1}}\bigg)^{\frac12} \bigg\|_{L^p(\mu)}
\lesssim \prod_{i=1}^2 ||f_i||_{L^{p_i}(\mu)}.
\end{align*}
%%%%%%%%%%%%%%%%%%%%%%%%%%%%%%%%%%%%%%%%%%%%%%%%%%%%%%%%%%%%%%%
\subsubsection{\textbf{Martingale difference decomposition.}}
The proof in this section is motivated by the ideas in \cite{MV}.

For convenience, when $\ell(Q)=2^s$, $\Delta_Q$ is understood as $\Delta_Q  + E_Q$.
Using the martingale difference decomposition $(\ref{f})$ and $(\ref{E-f})$, we have
\begin{align*}
&\Theta^{\mu}_{t,G}(f_1,f_2)
=\sum_{\substack{Q_1 \in \mathcal{D} \\ \ell(Q_1) \leq 2^s}} \Theta^{\mu}_{t,G}
\bigg(\Delta_{Q_1} f_1, \sum_{\substack{Q_2 \in \mathcal{D} \\ \ell(Q_1) \leq \ell(Q_2) \leq 2^s}} \Delta_{Q_1} f_2 \bigg) \\
&\qquad\qquad\qquad\quad + \sum_{\substack{Q_2 \in \mathcal{D} \\ \ell(Q_2) \leq 2^s}} \Theta^{\mu}_{t,G}
\bigg(\sum_{\substack{Q_1 \in \mathcal{D} \\ \ell(Q_2) < \ell(Q_1) \leq 2^s}} \Delta_{Q_1} f_1, \Delta_{Q_2} f_2 \bigg) \\
&=\sum_{\substack{Q_1 \in \mathcal{D} \\ \ell(Q_1) \leq 2^s}} \Theta^{\mu}_{t,G}\big(\Delta_{Q_1} f_1, E_{2^{-1}\ell(Q_1)} f_2\big)
 + \sum_{\substack{Q_2 \in \mathcal{D} \\ \ell(Q_2) < 2^s}} \Theta^{\mu}_{t,G}\big(E_{\ell(Q_2)} f_1, \Delta_{Q_2} f_2\big).
\end{align*}
Since the second one is much simpler, we focus on estimating the following term:
{\small
\begin{align*}
\G:=\bigg\|\bigg(\sum_{\substack{R \in \mathcal{D}_{good}\\ \ell(R) \leq 2^s}}
\mathbf{1}_{R} \int_{\ell(R)/2}^{\ell(R)} \int_\Rn \vartheta_t
\Big| \sum_{\substack{Q_1 \in \mathcal{D} \\ \ell(Q_1) \leq 2^s}} \Theta^{\mu}_{t,G}(\Delta_{Q_1} f_1, E_{2^{-1}\ell(Q_1)}f_2) \Big|^2 \frac{d\mu dt}{t^{m+1}}\bigg)^{\frac12} \bigg\|_{L^p(\mu)}.
\end{align*}
}

%%%%%%%%%%%%%%%%%%%%%%%%%%%%%%%%%%%%%%%%%%%%%%%%%%%%%%%%%%%%%%%%%
\subsection{Main estimates.}
In this subsection, we shall bound $\G$. For fixed cube $R \in \D_{good}$, we split the cubes $Q_1 \in \D$ into four cases:
\begin{enumerate}
\item  $\Xi_1 :=\big\{Q_1; \ell(Q_1) < \ell(R)\big\}$;
\vspace{0.2cm}
\item  $\Xi_2 := \big\{Q_1; \ell(Q_1) \geq \ell(R)$, $d(Q_1,R) > \ell(R)^{\gamma} \ell(Q_1)^{1-\gamma}\big\}$;
\vspace{0.2cm}
\item  $\Xi_3 := \big\{Q_1; \ell(R) \leq \ell(Q_1) \leq 2^r \ell(R)$, $d(Q_1,R) \leq \ell(R)^{\gamma}\ell(Q_1)^{1-\gamma}\big\}$;
\vspace{0.2cm}
\item  $\Xi_4 :=\big\{Q_1; \ell(Q_1) > 2^r \ell(R)$, $d(Q_1,R) \leq \ell(R)^{\gamma} \ell(Q_1)^{1-\gamma}\big\}$.
\end{enumerate}
Hence, we obtain correspondingly four terms, $\G_1$, $\G_2$, $\G_3$ and $\G_4$.

The following two lemmas will be used at certain key points below. The first one was shown in \cite{MV}.
%%%%%%%%%%%%%%%%%%%%%%%%%%%%%%%%%%%%%%%%%%%%%%%%%%%%%%%%%%%%%
\begin{lemma}\label{delta-QR}
Denote
$$
\delta(Q,R)=\frac{\ell(Q)^{\alpha/2} \ell(R)^{\alpha/2}}{D(Q,R)^{m+\alpha}},
$$
where $D(Q,R)=\ell(Q) + \ell(R) + d(Q,R)$ and $\alpha > 0$. Then for every
$x_Q \geq 0$, there holds that
$$
\bigg\| \bigg(\sum_{R \in \D} \mathbf{1}_R \Big(\sum_{Q \in \D} \delta(Q,R) \mu(Q) x_Q \Big)^2\bigg)^{1/2} \bigg\|_{L^p(\mu)}
\lesssim \bigg\| \Big(\sum_{Q \in \D} x_Q^2 \mathbf{1}_Q \Big)^{1/2}\bigg\|_{L^p(\mu)} .
$$
\end{lemma}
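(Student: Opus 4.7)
My strategy is to reduce the claim to a bilinear form, handle $p=2$ by a Schur test, and pass to general $p$ by weighted extrapolation or a pointwise maximal domination. Since the kernel $\delta(Q,R)$ is symmetric, $L^p$ bounds at $p$ and at $p'$ are dual to each other, and together with $L^2$ they cover the full range $1<p<\infty$ by vector-valued interpolation.

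At $p=2$ the left-hand side squared equals $\sum_R \mu(R) a_R^2$ with $a_R := \sum_Q \delta(Q,R)\mu(Q) x_Q$, and the right-hand side squared is $\sum_Q \mu(Q) x_Q^2$. By duality the $L^2$ bound is equivalent to
\[
\sum_{Q,R \in \D} \delta(Q,R)\,\mu(Q)\,\mu(R)\, x_Q\, y_R
\;\lesssim\; \Bigl(\sum_Q \mu(Q) x_Q^2\Bigr)^{\!1/2}\Bigl(\sum_R \mu(R) y_R^2\Bigr)^{\!1/2},
\]
which I would prove by Schur's test with symmetric weights $\phi_Q = \ell(Q)^{\beta}$ for some $\beta \in (0,\alpha/2)$. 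Concretely one verifies
\[
\sum_{Q \in \D} \delta(Q,R)\,\mu(Q)\,\ell(Q)^{\beta}\;\lesssim\;\ell(R)^{\beta}
\]
by grouping cubes $Q$ first by dyadic size $\ell(Q)=2^k$ and then by dyadic distance $d(Q,R)\sim 2^j$, using the power-boundedness $\mu(Q)\lesssim \ell(Q)^m$ together with the shell estimate $\sum_{Q:\,\ell(Q)=2^k,\,d(Q,R)\sim 2^j} \mu(Q) \lesssim (2^j+\ell(R))^m$. The factor $D(Q,R)^{m+\alpha}$ in the denominator gives enough decay to close both sums, while the parameter $\beta$ is tuned so that the resulting doubly geometric series converges as $k\to\pm\infty$.

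To pass from $p=2$ to general $1<p<\infty$, I would lift the Schur computation to a weighted $L^2(w)$ estimate for $w$ in a dyadic $A_2$ class (perturbing $\phi_Q$ by $w$) and then apply Rubio de Francia extrapolation. Alternatively, using $\ell(R)\leq D(Q,R)$ one obtains the pointwise bound
\[
\delta(Q,R)\;\leq\;\frac{\ell(Q)^{\alpha/2}}{(\ell(Q)+|x-c_Q|)^{m+\alpha/2}},\qquad x\in R,
\]
which allows $a_R$ to be dominated at any $x\in R$ by a dyadic fractional-type maximal function of $\sum_Q x_Q \mathbf{1}_Q$ at $x$; the Fefferman--Stein $L^p(\ell^2)$ inequality then closes the proof.

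The main obstacle will be this transition from $p=2$ to general $p$. Schur's test is intrinsically an $L^2$ tool, and the square function on the left, indexed by the highly overlapping family $\{R\ni x\}$, is not easily peeled apart by scalar $L^p$ methods. I expect the extrapolation route to be the most robust: the technical care lies in ensuring that the Schur weights can be perturbed by an arbitrary dyadic $A_2$ weight without losing convergence of the doubly geometric sum — this is plausible since the convergence margin supplied by $\alpha>0$ in the kernel is quantitative and only a small amount of it is consumed by the weight.
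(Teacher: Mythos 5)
The paper does not actually prove this lemma; it is quoted from \cite{MV}, and the standard argument there is essentially your third route carried out correctly. Your $L^2$ Schur test is sound (even with weight $\phi_Q\equiv1$: grouping $Q$ by scale and by distance and using $\mu(B(x,r))\lesssim r^m$ gives $\sum_{\ell(Q)=2^k}\mu(Q)\,D(Q,R)^{-m-\alpha}\lesssim\max(2^k,\ell(R))^{-\alpha}$, and the remaining geometric sums close). The trouble is the passage to general $p$. First, the opening claim that self-duality ``together with $L^2$'' covers $1<p<\infty$ by interpolation is circular: the dual exponent of $2$ is $2$, so duality plus the $L^2$ bound produces no exponent other than $2$; an independent input is required. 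Second, the extrapolation route hinges on a weighted $L^2(w\,d\mu)$ bound for every dyadic $A_2(\mu)$ weight obtained by ``perturbing the Schur weights''; that is exactly the nontrivial step (the off-diagonal interaction of $w$ on $Q$ with $w^{-1}$ on $R$ is where the $A_2$ condition must genuinely be used), and it is asserted rather than verified.

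Your maximal-function route is the right one, but the pointwise bound you state destroys it. From $\delta(Q,R)\lesssim\ell(Q)^{\alpha/2}(\ell(Q)+|x-c_Q|)^{-m-\alpha/2}$ for $x\in R$ you obtain a bound on $a_R(x):=\sum_Q\delta(Q,R)\mu(Q)x_Q$ that no longer depends on $R$; since the left-hand side of the lemma sums $a_R(x)^2$ over all $R\ni x$ --- one cube per dyadic scale --- a bound uniform in $R$ makes that sum diverge. The essential feature of $\delta$ is the inter-scale decay $\bigl(\min(\ell(Q),\ell(R))/\max(\ell(Q),\ell(R))\bigr)^{\alpha/2}\le2^{-|k-l|\alpha/2}$ for $\ell(Q)=2^k$, $\ell(R)=2^l$, and you must keep part of it: retaining half one gets, for $x\in R$,
\[
a_R(x)\;\lesssim\;\sum_{k}2^{-|k-l|\alpha/4}\,M_m f_k(x),\qquad f_k:=\sum_{\ell(Q)=2^k}x_Q\mathbf{1}_Q,\qquad M_mg(x):=\sup_{r>0}r^{-m}\int_{B(x,r)}|g|\,d\mu .
\]
Cauchy--Schwarz in $k$ then gives $\sum_R\mathbf{1}_Ra_R^2\lesssim\sum_k(M_mf_k)^2$, and since $M_mg\lesssim M_\mu g$ (centered maximal function, using $\mu(B(x,r))\lesssim r^m$), the Fefferman--Stein vector-valued inequality for $M_\mu$ together with the disjointness of same-scale cubes (so $\sum_kf_k^2=\sum_Qx_Q^2\mathbf{1}_Q$) finishes the proof. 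Without the $2^{-|k-l|\alpha/4}$ factor your route fails, and your other two routes are not completed, so as written the proposal has a genuine gap.
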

%%%%%%%%%%%%%%%%%%%%%%%%%%%%%%%%%%%%%%%%%%%%%%%%%%%%%%
\begin{lemma}\label{A-Q1}
Let $0 < \alpha \leq m(\lambda - 2)/2$. Let $Q_1$ and $R$ be given cubes and $(x,t) \in W_R$.
If $Q_1 \in \Xi_1 \cup \Xi_2 \cup \Xi_3$, then there holds that
\begin{align*}
\mathcal{A}_{Q_1}(x,t)&:=
\bigg( \int_{\Rn} \vartheta_t(x,y) \big|\Theta^{\mu}_{t,G}(\Delta_{Q_1} f_1, E_{2^{-1}\ell(Q_1)}f_2)(y) \big|^2 \frac{d\mu(y)}{t^m}\bigg)^{1/2} \\
&\lesssim M_{m}(M_{\D}f_2)(x) \cdot \delta(Q_1,R) \big\|\Delta_{Q_1} f_1 \big\|_{L^1(\mu)} .
\end{align*}
\end{lemma}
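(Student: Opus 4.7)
The plan is to handle the three cases $Q_1\in\Xi_1$, $\Xi_2$, $\Xi_3$ separately, in each case reducing the $y$-integral defining $\mathcal{A}_{Q_1}(x,t)^2$ to a pointwise estimate on $\Theta^{\mu}_{t,G}(\Delta_{Q_1}f_1, E_{2^{-1}\ell(Q_1)}f_2)(y)$ followed by an annular decomposition in the spirit of Lemma~\ref{U(f)}. A common preliminary reduction is that $|E_{2^{-1}\ell(Q_1)}f_2(z_2)|\le M_{\D}f_2(z_2)$ pointwise, so the $z_2$-integration against a Poisson-type factor $t^{\alpha}/(t+|y-z_2|)^{m+\alpha}$ is, by dyadic annular splitting using $\mu(B(y,r))\lesssim r^m$, dominated by $M_m(M_{\D}f_2)(y)$; since the $\vartheta_t(x,y)$ factor effectively restricts to $|x-y|\lesssim t$ up to polynomial decay, this in turn is controlled by $M_m(M_{\D}f_2)(x)$ at the cost of an absorbable constant coming from the integrability $\int \vartheta_t(x,y)(1+|x-y|/t)^{2(m+\alpha)}\,d\mu(y)/t^m\lesssim 1$, which is precisely what the hypothesis $\alpha\le m(\lambda-2)/2$ guarantees.

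For $Q_1\in\Xi_1$ (small $Q_1$), since $(x,t)\in W_R$ forces $t\simeq \ell(R)>\ell(Q_1)$, the center $c_{Q_1}$ satisfies $|z_1-c_{Q_1}|\lesssim \ell(Q_1)<t/2$ for all $z_1\in Q_1$, so the $z_1$-H\"older condition applies. Using $\int \Delta_{Q_1}f_1\,d\mu=0$ to subtract $s_{t,G}(y,c_{Q_1},z_2)$ yields
\begin{align*}
|\Theta^{\mu}_{t,G}(\Delta_{Q_1}f_1,E_{2^{-1}\ell(Q_1)}f_2)(y)|
&\lesssim \ell(Q_1)^{\alpha}\int_{Q_1}\frac{|\Delta_{Q_1}f_1(z_1)|}{(t+|y-z_1|)^{m+\alpha}}\,d\mu(z_1)\\
&\quad\times M_m(M_{\D}f_2)(y).
\end{align*}
Splitting the $y$-integration at $|x-y|=\tfrac12|x-z_1|$ (so that $(t+|y-z_1|)\simeq (t+|x-z_1|)$ in the main region) and invoking $|x-z_1|\simeq \ell(R)+d(Q_1,R)+\ell(Q_1)\simeq D(Q_1,R)$ for $x\in R$, $z_1\in Q_1$, gives $\mathcal{A}_{Q_1}(x,t)\lesssim \ell(Q_1)^{\alpha}\|\Delta_{Q_1}f_1\|_{L^1(\mu)}M_m(M_{\D}f_2)(x)/D(Q_1,R)^{m+\alpha}$; the bound $\ell(Q_1)^{\alpha}\le\ell(Q_1)^{\alpha/2}\ell(R)^{\alpha/2}$ (valid because $\ell(Q_1)<\ell(R)$) then closes the case.

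For $Q_1\in\Xi_2$ (large, well-separated $Q_1$), I use only the size condition. For $y$ in the effective region $|x-y|\lesssim t$ and $z_1\in Q_1$ one has $|y-z_1|\gtrsim d(Q_1,R)$, giving
$$
\mathcal{A}_{Q_1}(x,t)\lesssim \frac{t^{\alpha}\|\Delta_{Q_1}f_1\|_{L^1(\mu)}}{d(Q_1,R)^{m+\alpha}}M_m(M_{\D}f_2)(x)\simeq \frac{\ell(R)^{\alpha}\|\Delta_{Q_1}f_1\|_{L^1(\mu)}}{d(Q_1,R)^{m+\alpha}}M_m(M_{\D}f_2)(x).
$$
The goodness separation $d(Q_1,R)\ge \ell(R)^{\gamma}\ell(Q_1)^{1-\gamma}$ with $\gamma=\alpha/[2(m+\alpha)]$ and the relation $D(Q_1,R)\simeq \ell(Q_1)+d(Q_1,R)$ together imply $\ell(R)^{\alpha}/d(Q_1,R)^{m+\alpha}\lesssim \ell(Q_1)^{\alpha/2}\ell(R)^{\alpha/2}/D(Q_1,R)^{m+\alpha}=\delta(Q_1,R)$. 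For $Q_1\in\Xi_3$ (moderately larger, close $Q_1$) all scales $\ell(Q_1),\ell(R),t,D(Q_1,R)$ are comparable up to the factor $2^r$, so $\delta(Q_1,R)\simeq \ell(R)^{-m}$, and the coarse bound $(t+|y-z_1|)^{m+\alpha}\ge t^{m+\alpha}$ already delivers $\mathcal{A}_{Q_1}(x,t)\lesssim \ell(R)^{-m}\|\Delta_{Q_1}f_1\|_{L^1(\mu)}M_m(M_{\D}f_2)(x)$.

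The main obstacle is bookkeeping of exponents in each regime so that the symmetric expression $\ell(Q_1)^{\alpha/2}\ell(R)^{\alpha/2}$ emerges without loss: in $\Xi_1$ one must absorb the asymmetric factor $\ell(Q_1)^{\alpha}$ into $(\ell(Q_1)\ell(R))^{\alpha/2}$ by consuming the free $(\ell(Q_1)/\ell(R))^{\alpha/2}\le 1$, while in $\Xi_2$ the crucial interpolation between $\ell(R)^{\alpha}$ and $(\ell(Q_1)\ell(R))^{\alpha/2}$ hinges on the precise tuning $\gamma=\alpha/[2(m+\alpha)]$. Everywhere else the arguments are dyadic-annular estimates of the type already established in Section~\ref{Sec-estimates}.
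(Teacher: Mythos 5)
Your proposal follows the same route as the paper: the identical split into $\Xi_1,\Xi_2,\Xi_3$, the vanishing property plus the $z_1$-H\"older condition and the absorption $\ell(Q_1)^{\alpha}\le \ell(Q_1)^{\alpha/2}\ell(R)^{\alpha/2}$ for $\Xi_1$, the size condition for $\Xi_2\cup\Xi_3$ with the two sub-cases $\ell(Q_1)\lessgtr d(Q_1,R)$ and the interpolation through $\gamma=\alpha/(2(m+\alpha))$, and the reduction of the $z_2$-integration to $M_m(M_{\D}f_2)$. Two steps, however, are not correct as literally written. The more serious one is the ``common preliminary reduction'': the claimed bound $\int_{\Rn}\vartheta_t(x,y)(1+|x-y|/t)^{2(m+\alpha)}\,d\mu(y)/t^m\lesssim 1$ is false under the hypothesis $\alpha\le m(\lambda-2)/2$. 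Indeed the integrand equals $\big(t/(t+|x-y|)\big)^{m\lambda-2(m+\alpha)}$, and for a general power-bounded measure this integrates (against $d\mu(y)/t^m$) only when the exponent exceeds $m$, i.e. $2\alpha<m(\lambda-3)$; the hypothesis merely gives $m\lambda-2(m+\alpha)\ge 0$. Moreover, if you spend the whole factor $(1+|x-y|/t)^{2(m+\alpha)}$ on this transfer, no decay in $|x-y|$ is left to produce the crucial gain $D(Q_1,R)^{-(m+\alpha)}$ in the region where $y$ is close to $z_1$ but far from $x$. The repair is either to observe that the transfer from $y$ to $x$ actually costs only $(1+|x-y|/t)^{m}$ per factor (the dyadic annuli around $y$ already carry the decay $2^{-k\alpha}$), or, as the paper does, to keep $\vartheta_t(x,y)$ intact for the $y$-integration and split $\Rn$ at $|y-z_1|=d(Q_1,R)/2$: on $\{|y-z_1|>d/2\}$ one bounds the $z_1$-kernel by its supremum $\lesssim \ell(Q_1)^{\alpha}D(Q_1,R)^{-(m+\alpha)}$ and uses $\int\vartheta_t\,d\mu/t^m\lesssim 1$, while on $\{|y-z_1|\le d/2\}$ one has $|x-y|\ge d/2$, hence $\vartheta_t(x,y)\lesssim (t/D(Q_1,R))^{2(m+\alpha)}$ --- this is exactly where $\alpha\le m(\lambda-2)/2$ enters --- and integrates the $z_1$-kernel in $y$.

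The smaller slip is the assertion $|x-z_1|\simeq D(Q_1,R)$ for $x\in R$, $z_1\in Q_1$: the lower bound fails when $Q_1$ abuts $R$. What holds, and what you actually need, is $t+|x-z_1|\gtrsim \ell(R)+d(Q_1,R)\simeq D(Q_1,R)$ in case $\Xi_1$, using $t\simeq\ell(R)$ and $\ell(Q_1)<\ell(R)$. With these two corrections your argument coincides with the paper's proof.
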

%%%%%%%%%%%%%%%%%%%%%%%%%%%%%%%%%%%%%%%%
\begin{proof}
$(1)$ The condition $Q_1 \in \Xi_1$ implies the vanishing property $\int \Delta_{Q_1} f_1 d\mu =0$. Then by H\"{o}lder condition, we have
{\small
\begin{align*}
&\big|\Theta_{t,G}^{\mu}(\Delta_{Q_1} f_1, E_{2^{-1}\ell(Q_1)}f_2)(y)\big| \\
&= \bigg|\int_{\Rn} \int_{Q_1} \big(s_t(y,z_1,z_2) - s_t(y,c_{Q_1},z_2)\big) \Delta_{Q_1} f_1(z_1) E_{2^{-1}\ell(Q_1)}f_2(z_2) d\mu\bigg| \\
&\lesssim \int_{Q_1} \frac{\ell(Q_1)^\alpha}{(t+|y-z_1|)^{m+\alpha}}|\Delta_{Q_1} f_1(z_1)| d\mu(z_1)
\int_{\Rn} \frac{t^\alpha M_{\D} f_2(z_2)}{(t+|y-z_2|)^{m+\alpha}} d\mu(z_2) \\
&\lesssim M_{m}(M_{\D} f_2)(x) \cdot \int_{Q_1} \frac{\ell(Q_1)^\alpha}{(t+|y-z_1|)^{m+\alpha}}|\Delta_{Q_1} f_1(z_1)| d\mu(z_1).
\end{align*}
}
Thus, it follows from Minkowski's inequality that
\begin{align*}
\mathcal{A}_{Q_1}(x,t)
&\lesssim  M_{m}(M_{\D} f_2)(x) \int_{Q_1} |\Delta_{Q_1} f_1(z_1)|  
\\%%%%%%%%%%%%
&\quad\times \bigg(\int_{\Rn} \Big(\frac{t}{t+|x-y|}\Big)^{m \lambda} \frac{\ell(Q_1)^{2\alpha}}{(t+|y-z_1|)^{2(m+\alpha)}}
\frac{d\mu(y)}{t^m}\bigg)^{1/2} d\mu(z_1) .
\end{align*}

In order to treat the contribution of the inner integral, we split the domain $\Rn=\big\{y;|y-z_1| > d(Q_1,R)/2\big\} \cup \big\{y;|y-z_1| \leq d(Q_1,R)/2\big\} =: E_1 \cup E_2 $.
If $|y-z_1| > d(Q_1,R)/2$, there holds that $t + |y-z_1| \gtrsim \ell(R) + d(Q_1,R) \simeq  D(Q_1,R)$.
Thus, it follows that
\begin{equation}\label{E1}
\bigg(\int_{E_1} \Big(\frac{t}{t+|x-y|}\Big)^{m \lambda} \frac{\ell(Q_1)^{2\alpha}}{(t+|y-z_1|)^{2(m+\alpha)}} \frac{d\mu(y)}{t^m}\bigg)^{1/2}
\lesssim \delta(Q_1,R).
\end{equation}
If $y:|y-z_1| \leq d(Q_1,R)/2$, then $|x-y| \geq |x-z_1| - |y-z_1| \geq d(Q_1,R)/2$ and
$$
\Big(\frac{t}{t+|x-y|}\Big)^{m \lambda}
\lesssim \frac{t^{2(m+\alpha)}}{(\ell(R)+d(Q_1,R))^{2(m+\alpha)}}
\simeq \frac{t^{2(m+\alpha)}}{D(Q_1,R)^{2(m+\alpha)}}.$$
Therefore, we obtain that
\begin{equation}\label{E2}
\bigg(\int_{E_2} \Big(\frac{t}{t+|x-y|}\Big)^{m \lambda} \frac{\ell(Q_1)^{2\alpha}}{(t+|y-z_1|)^{2(m+\alpha)}} \frac{d\mu(y)}{t^m}\bigg)^{1/2}
\lesssim \delta(Q_1,R),
\end{equation}
We have used the inequality $\int_{\Rn} \big(\frac{t}{t+|y-z_1|}\big)^{\tau} \frac{d\mu(y)}{t^m} \lesssim 1$ for any $\tau > m$ in $(\ref{E1})$ and $(\ref{E2})$.
Collection the above estimates, we deduce the desired result. \\
%%%%%%%%%%%%%%%%%%%%%%%%%%%%%%%%%%%%%%%%%%%%%%%%%%%%%%%%%%%%%%%%
$(2)$ The condition $Q_1 \in \Xi_2 \cup \Xi_3$ indicates that
\begin{equation}\label{R-R-Q1-R}
\frac{\ell(R)^\alpha}{(\ell(R)+d(Q_1,R))^{m+\alpha}}
\lesssim \delta(Q_1,R).
\end{equation}
Actually, if $Q_1 \in \Xi_3$, it is easy to see that $\ell(Q_1) \simeq \ell(R) \simeq D(Q_1,R)$, which gives $(\ref{R-R-Q1-R})$. It remains to consider the case $Q_1 \in \Xi_2$.
If  $\ell(Q_1) \leq d(Q_1,R)$, it is obvious that
\begin{align*}
\frac{\ell(R)^\alpha}{(\ell(R)+d(Q_1,R))^{m+\alpha}}
\lesssim \frac{\ell(R)^\alpha}{D(Q_1,R)^{m+\alpha}}
\leq \delta(Q_1,R).
\end{align*}
If $\ell(Q_1) > d(Q_1,R)$, then $\ell(Q_1) \simeq D(Q_1,R)$. Together with $d(Q_1,R) > \ell(R)^{\gamma} \ell(Q_1)^{1-\gamma}$ and
$\gamma = \frac{\alpha}{2(m+\alpha)}$, this gives that
\begin{align*}
\ell(Q_1) = \bigg(\frac{\ell(Q_1)}{\ell(R)}\bigg)^\gamma \ell(R)^\gamma \ell(Q_1)^{1-\gamma}
< \bigg(\frac{\ell(Q_1)}{\ell(R)}\bigg)^\gamma d(Q_1,R),
\end{align*}
and
\begin{align*}
\frac{\ell(R)^\alpha}{(\ell(R)+d(Q_1,R))^{m+\alpha}}
\leq \frac{\ell(R)^\alpha}{d(Q_1,R)^{m+\alpha}}
\leq \frac{\ell(Q_1)^{\alpha/2} \ell(R)^{\alpha/2}}{\ell(Q_1)^{m+\alpha}}
\simeq \delta(Q_1,R).
\end{align*}

The size condition implies that
\begin{align*}
\big|\Theta_{t,G}^{\mu}(\Delta_{Q_1} f_1, E_{2^{-1}\ell(Q_1)}f_2)(y)\big|
\lesssim M_{m}(M_{\D} f_2)(x) \int_{Q_1} \frac{t^\alpha |\Delta_{Q_1} f_1(z_1)|}{(t+|y-z_1|)^{m+\alpha}} d\mu(z_1).
\end{align*}
The rest of arguments are similar to those in the above case. This completes the proof.
\end{proof}

%%%%%%%%%%%%%%%%%%%%%%%%%%%%%%%%%%%%%%%%%%%%%%%%%%%%%%%%%%%%%%%%%%%%%%%%%%%%
\subsubsection{\textbf{Parts $\G_1$, $\G_2$ and $\G_3$.}}\label{sec-less}
Based on the above lemmas, we deal with the three terms $\G_1$, $\G_2$ and $\G_3$ uniformly. Applying Minkowski's inequality and Lemma $\ref{delta-QR}$, we have
\begin{align*}
\G_1
&\leq \bigg\|\bigg\{\sum_{\substack{R \in \mathcal{D}_{good}\\ \ell(R) \leq 2^s}}
\mathbf{1}_{R} \bigg[\sum_{\substack{Q_1 \in \mathcal{D} \\ \ell(Q_1) < \ell(R)}} \bigg(\int_{\ell(R)/2}^{\ell(R)} \mathcal{A}_{Q_1}(x,t)^2
\frac{dt}{t}\bigg)^{1/2}\bigg]^2\bigg\}^{1/2} \bigg\|_{L^p(\mu)} \\
&\lesssim \bigg\| M_{m}(M_{\D}f_2) \bigg\{\sum_{\substack{R \in \mathcal{D}_{good}\\ \ell(R) \leq 2^s}} \mathbf{1}_{R}
\bigg[\sum_{\substack{Q_1 \in \mathcal{D} \\ \ell(Q_1) \leq 2^s}} \delta(Q_1,R) \big\|\Delta_{Q_1} f_1 \big\|_{L^1(\mu)} \bigg]^2\bigg\}^{1/2} \bigg\|_{L^p(\mu)}.
\end{align*}
Furthermore, H\"{o}lder's inequality and $L^p(\mu)$ boundedness of the maximal operators give that
\begin{align*}
\G_1
&\leq \big\| M_{m}(M_{\D}f_2)\big\|_{L^{p_2}} \bigg\| \bigg\{\sum_{\substack{R \in \mathcal{D}_{good}\\ \ell(R) \leq 2^s}}
\mathbf{1}_{R} \bigg[\sum_{\substack{Q_1 \in \mathcal{D} \\ \ell(Q_1) \leq 2^s}} \delta(Q_1,R) \big\|\Delta_{Q_1} f_1 \big\|_{L^1} \bigg]^2\bigg\}^{\frac12} \bigg\|_{L^{p_1}} \\
&\lesssim \big\| f_2 \big\|_{L^{p_2}(\mu)} \bigg\| \bigg(\sum_{\substack{Q_1 \in \mathcal{D} \\ \ell(Q_1) \leq 2^s}} \langle |\Delta_{Q_1} f_1| \rangle_{Q_1}^2
\mathbf{1}_{Q_1}\bigg)^{1/2} \bigg\|_{L^{p_1}(\mu)}.
\end{align*}
Note that
$$
\bigg\| \bigg(\sum_{\substack{Q_1 \in \mathcal{D} \\ \ell(Q_1) \leq 2^s}} \langle |\Delta_{Q_1} f_1| \rangle_{Q_1}^2 \mathbf{1}_{Q_1}\bigg)^{\frac12} \bigg\|_{L^{p_1}}
\lesssim \bigg\| \bigg(\sum_{\substack{Q_1 \in \mathcal{D} \\ \ell(Q_1) \leq 2^s}} |\Delta_{Q_1} f_1|^2 \bigg)^{\frac12} \bigg\|_{L^{p_1}}
\lesssim \big\| f_1 \big\|_{L^{p_1}}.
$$
This shows $\G_1 \lesssim \big\| f_1 \big\|_{L^{p_1}(\mu)} \big\| f_2 \big\|_{L^{p_2}(\mu)}$.
The arguments for $\G_2$ and $\G_3$ are the same.
\qed
%%%%%%%%%%%%%%%%%%%%%%%%%%%%%%%%%%%%%%%%%%%%%%%%%%%%%%%%%%
\subsubsection{\textbf{Part $\G_4$.}}
Let $R^{(k)} \in \mathcal{D}$ be the unique cube for which $R \subset R^{(k)}$ and $\ell(R^{(k)}) = 2^k \ell(R)$. In this case, it holds $R \subset Q_1$, since $R$ is good. Then we
write
\begin{align*}
\G_4 &= \bigg\|\bigg(\sum_{\substack{R \in \mathcal{D}_{good}\\ \ell(R) \leq 2^{s-r-1}}}
\mathbf{1}_{R} \int_{\ell(R)/2}^{\ell(R)} \int_\Rn \vartheta_t(\cdot,y) 
\\%%%%%%%%%%%%%
&\qquad\times \Big| \sum_{k=r+1}^{s-\log_2{\ell(R)}} \Theta^{\mu}_{t,G}(\Delta_{R^{(k)}} f_1, E_{2^{-1}\ell(R^{(k)})}f_2)(y) \Big|^2 \frac{d\mu(y) dt}{t^{m+1}}\bigg)^{1/2} \bigg\|_{L^p(\mu)}.
\end{align*}
Note that
\begin{align}
\label{Delta-R} &\Delta_{R^{(k)}}f_1 = \mathbf{1}_{(R^{(k-1)})^c} \Delta_{R^{(k)}}f_1
  - \langle \Delta_{R^{(k)}}f_1 \rangle_{R^{(k-1)}} \mathbf{1}_{(R^{(k-1)})^c}
  + \langle \Delta_{R^{(k)}}f_1 \rangle_{R^{(k-1)}}, 
\\%%%%%%%%%%%%%
\label{E-R} &E_{2^{-1} \ell(R^{(k)})}f_2
= \mathbf{1}_{(R^{(k-1)})^c} E_{\ell(R^{(k-1)})}f_2
  - \langle f_2 \rangle_{R^{(k-1)}} \mathbf{1}_{(R^{(k-1)})^c}
  + \langle f_2 \rangle_{R^{(k-1)}}.
\end{align}
Using $(\ref{Delta-R})$, we control $\G_4$ by three terms, in which the first two terms are denoted by $\G_{41}$ and $\G_{42}$. As for the term corresponding to
$\Theta^{\mu}_{t,G}(\langle \Delta_{R^{(k)}}f_1 \rangle_{R^{(k-1)}}, E_{2^{-1}\ell(R^{(k)})}f_2)$, it by $(\ref{E-R})$ is dominated by
other three parts denoted by $\G_{43}$, $\G_{44}$ and $\G_{par}$. If we set
\begin{align*}
\mathcal{N}_{k,1}(x,t)
&:=\bigg(\int_\Rn \vartheta_t \big|\Theta^{\mu}_{t,G}(\mathbf{1}_{(R^{(k-1)})^c} \Delta_{R^{(k)}}f_1, E_{2^{-1}\ell(R^{(k)})}f_2)(y) \big|^2 \frac{d\mu}{t^m}\bigg)^{\frac12}, \\
\mathcal{N}_{k,2}(x,t)
&:=\big|\langle \Delta_{R^{(k)}}f_1 \rangle_{R^{(k-1)}}\big| \bigg(\int_\Rn \vartheta_t \big|\Theta^{\mu}_{t,G}(\mathbf{1}_{(R^{(k-1)})^c}, E_{2^{-1}\ell(R^{(k)})}f_2)(y)
\big|^2 \frac{d\mu}{t^m}\bigg)^{\frac12}, \\
\mathcal{N}_{k,3}(x,t)
&:=\big|\langle \Delta_{R^{(k)}}f_1 \rangle_{R^{(k-1)}}\big| \bigg(\int_\Rn \vartheta_t \big|\Theta^{\mu}_{t,G}(1, \mathbf{1}_{(R^{(k-1)})^c} E_{\ell(R^{(k-1)})}f_2)(y) \big|^2
\frac{d\mu}{t^m}\bigg)^{\frac12}, \\
\mathcal{N}_{k,4}(x,t)
&:=\big|\langle \Delta_{R^{(k)}}f_1 \rangle_{R^{(k-1)}}\big|  \big|\langle f_2 \rangle_{R^{(k-1)}}\big|
\bigg(\int_\Rn \vartheta_t \big|\Theta^{\mu}_{t,G}(1, \mathbf{1}_{(R^{(k-1)})^c})(y) \big|^2 \frac{d\mu}{t^m}\bigg)^{\frac12},
\end{align*}
then
\begin{align*}
\G_{4j} \leq \bigg\|\bigg(\sum_{\substack{R \in \mathcal{D}_{good}\\ \ell(R) \leq 2^{s-r-1}}}
\mathbf{1}_{R} \bigg\{\sum_{k=r+1}^{s-\log_2{\ell(R)}} \bigg(\int_{\ell(R)/2}^{\ell(R)} \mathcal{N}_{k,j}(\cdot,t)^2 \frac{dt}{t} \bigg)^{\frac12} \bigg\}^2 \bigg)^{\frac12}
\bigg\|_{L^p(\mu)}.
\end{align*}
Let us dominate $\mathcal{N}_{k,j}(x,t)$. From Lemma $\ref{U(f)}$ and goodness of $R$, it follows that
\begin{align*}
&\mathcal{N}_{k,1}(x,t)
\lesssim \int_{(R^{(k-1)})^c} \frac{t^{\alpha/4}}{(t+|x-z_1|)^{m+\alpha/4}} \big|\Delta_{R^{(k)}}f_1(z_1)\big| d\mu(z_1) \\
&\qquad\qquad \times \int_{\Rn} \frac{t^{\alpha/4}}{(t+|x-z_2|)^{m+\alpha/4}} \big|E_{2^{-1}\ell(R^{(k)})}f_2(z_2)\big| d\mu(z_2) \\
&\lesssim  \ell(R)^{\alpha/4} d(R, \partial R^{(k-1)})^{m+\alpha/4} \big\| \Delta_{R^{(k)}}f_1 \big\|_{L^1(\mu)} \cdot M_m(M_{\D} f_2)(x) \\
&\lesssim  \ell(R)^{\alpha/4} \ell(R)^{\gamma(m+\alpha/4)} \ell(R^{(k-1)})^{(1-\gamma)(m+\alpha/4)} \big\| \Delta_{R^{(k)}}f_1 \big\|_{L^1(\mu)} M_m(M_{\D} f_2)(x) \\
&\lesssim  2^{-\alpha k/2} \big\langle |\Delta_{R^{(k)}} f_1| \big\rangle_{R^{(k)}}  M_m(M_{\D} f_2)(x) .
\end{align*}
Applying Lemma $\ref{U(f)}$ again, we have
\begin{align*}
\mathcal{N}_{k,2}(x,t)
&\lesssim \big|\langle \Delta_{R^{(k)}}f_1 \rangle_{R^{(k-1)}}\big|  M_m(M_{\D} f_2)(x) \int_{(R^{(k-1)})^c} \frac{t^{\alpha/4} d\mu(z_1)}{(t+|x-z_1|)^{m+\alpha/4}} \\
&\lesssim \big|\langle\Delta_{R^{(k)}}f_1 \rangle_{R^{(k-1)}}\big|  M_m(M_{\D} f_2)(x)\int_{\Rn \setminus B(x,d(R,\partial R^{(k-1)}))} \frac{\ell(R)^{\alpha/4} d\mu(z_1)}{|x -z_1|^{m+\alpha/4}}\\
&\lesssim \ell(R)^{\alpha/4} d(R, \partial R^{(k-1)})^{-\alpha/4} \big|\langle\Delta_{R^{(k)}}f_1 \rangle_{R^{(k-1)}}\big| M_m(M_{\D} f_2)(x) \\
&\lesssim 2^{-\alpha k/8} \big\langle |\Delta_{R^{(k)}} f_1| \big\rangle_{R^{(k-1)}} M_m(M_{\D} f_2)(x).
\end{align*}
Similarly, it yields that
\begin{align*}
\mathcal{N}_{k,3}(x,t)
&\lesssim \big|\langle \Delta_{R^{(k)}}f_1 \rangle_{R^{(k-1)}}\big| \int_{(R^{(k-1)})^c} 
\frac{t^{\alpha/4} M_{\D} f_2(z_2) }{(t+|x-z_2|)^{m+\alpha/4}} d\mu(z_2)  \\
&\lesssim 2^{-\alpha k/8} \big\langle |\Delta_{R^{(k)}} f_1| \big\rangle_{R^{(k-1)}} M_m(M_{\D} f_2)(x),
\end{align*}
and
$$
\mathcal{N}_{k,4}(x,t)
\lesssim 2^{-\alpha k/8} \big\langle |\Delta_{R^{(k)}} f_1| \big\rangle_{R^{(k-1)}} M_m(M_{\D} f_2)(x).
$$
Consequently, by H\"{o}lder's inequality and Minkowski's inequality, we conclude that
{\small
\begin{align*}
\G_{41} &\leq \big\| M_m(M_{\D} f_2) \big\|_{L^{p_2}}
\bigg\|\bigg(\sum_{\substack{R \in \mathcal{D}_{good}\\ \ell(R) \leq 2^{s-r-1}}}
\mathbf{1}_{R} \bigg\{\sum_{k=r+1}^{s-\log_2{\ell(R)}} 2^{-\frac{\alpha}{8} k} \big\langle |\Delta_{R^{(k)}} f_1| \big\rangle_{R^{(k)}} \bigg\}^2 \bigg)^{\frac12} \bigg\|_{L^{p_1}} \\
&\lesssim \big\| f_2 \big\|_{L^{p_2}(\mu)}
\sum_{k=r+1}^{s-\log_2{\ell(R)}} 2^{-\alpha k/8} \bigg\|\bigg(\sum_{\substack{R \in \mathcal{D}_{good}\\ \ell(R) \leq 2^{s-r-1}}}
\mathbf{1}_{R} \big\langle |\Delta_{R^{(k)}} f_1| \big\rangle_{R^{(k)}}^2 \bigg)^{\frac12} \bigg\|_{L^{p_1}(\mu)} \\
&\lesssim \big\| f_2 \big\|_{L^{p_2}(\mu)} \bigg\|\bigg(\sum_{\substack{R \in \mathcal{D}\\ \ell(R) \leq 2^{s}}}
\mathbf{1}_{R} \big\langle |\Delta_{R} f_1| \big\rangle_{R}^2 \bigg)^{1/2} \bigg\|_{L^{p_1}(\mu)} \\
&\lesssim \big\| f_2 \big\|_{L^{p_2}(\mu)} \bigg\|\bigg(\sum_{\substack{R \in \mathcal{D}\\ \ell(R) \leq 2^{s}}}
|\Delta_{R} f_1|^2 \bigg)^{1/2} \bigg\|_{L^{p_1}(\mu)}
\lesssim \big\| f_1 \big\|_{L^{p_1}(\mu)} \big\| f_2 \big\|_{L^{p_2}(\mu)}.
\end{align*}
}
The other three parts can be controlled as follows. For $j=2,3,4$, there holds that
\begin{align*}
\G_{4j}
&\lesssim \big\| f_2 \big\|_{L^{p_2}(\mu)} \bigg\|\bigg(\sum_{\substack{R \in \mathcal{D} \\ \ell(R) \leq 2^{s-1}}}
\mathbf{1}_{R} \big\langle |\Delta_{R^{(1)}} f_1| \big\rangle_{R}^2 \bigg)^{\frac12} \bigg\|_{L^{p_1}(\mu)} \\
&= \big\| f_2 \big\|_{L^{p_2}(\mu)} \bigg\|\bigg(\sum_{\substack{R \in \mathcal{D} \\ \ell(R) \leq 2^{s}}} |\Delta_{R} f_1| \bigg)^{1/2} \bigg\|_{L^{p_1}(\mu)}
\lesssim \big\| f_1 \big\|_{L^{p_1}(\mu)} \big\| f_2 \big\|_{L^{p_2}(\mu)}.
\end{align*}
%%%%%%%%%%%%%%%%%%%%%%%%%%%%%%%%%%%%%%%%%%%%%%%%%%%%%%%
The remainder of this subsection is devoted to bounding the term $\G_{par}$.

\noindent\textbf{$\bullet$ Paraproduct estimate.}
Recall that
\begin{align*}
\G_{par}:=\bigg\|\bigg(\sum_{\substack{R \in \mathcal{D}_{good}\\ \ell(R) \leq 2^{s-r-1}}}
\mathbf{1}_{R} \int_{\ell(R)/2}^{\ell(R)} \int_\Rn \vartheta_t(\cdot,y)
\Big|  \Theta^{\mu}_{t,G}(1, 1)(y) \Big|^2
\frac{d\mu dt}{t^{m+1}}\bigg)^{1/2} \bigg\|_{L^p(\mu)}.
\end{align*}
where 
\[
\mathscr{A}_R := 
\sum_{k=r+1}^{s-\log_2{\ell(R)}} \langle \Delta_{R^{(k)}}f_1 \rangle_{R^{(k-1)}} \langle f_2 \rangle_{R^{(k-1)}} . 
\]
Splitting $\langle f_2 \rangle_{R^{(k-1)}}=\langle f_2 \rangle_{R^{(k)}} + \langle \Delta_{R^{(k)}} f_2 \rangle_{R^{(k-1)}}$, 
we dominate $\G_{par}$ by the corresponding two pieces denoted by $\G_{par}'$ and $\G_{par}''$.

To discuss the term $\G_{par}'$, write $\psi := \sum_{\substack{Q \in \mathcal{D} \\ \ell(Q) \leq 2^s}} \Delta_Q f_1 \cdot \langle f_2 \rangle_Q$. Observe that 
$$
\sum_{k=r+1}^{s-\log_2{\ell(R)}} \langle \Delta_{R^{(k)}}f_1 \rangle_{R^{(k-1)}} \langle f_2 \rangle_{R^{(k)}}
=\sum_{k=r+1}^{s-\log_2{\ell(R)}}  \langle \Delta_{R^{(k)}} \psi \rangle_{R^{(k-1)}}
= \langle \psi \rangle_{R^{(r)}}.
$$
Thereupon, it yields that
\begin{align*}
\G_{par}^{'}=\bigg\|\bigg(\sum_{\substack{Q \in \mathcal{D} \\ \ell(Q) \leq 2^{s-1}}}
\big|\langle \psi \rangle_Q\big|^2 a_Q^2\bigg)^{1/2} \bigg\|_{L^p(\mu)},
\end{align*}
if we denote
\begin{equation}\label{aQ}
a_Q(x) := \bigg(\sum_{\substack{R \in \mathcal{D}_{good}\\ R^{(r)}=Q }}
\mathbf{1}_{R} \big|\langle \psi \rangle_{R^{(r)}}\big|^2 \int_{\ell(R)/2}^{\ell(R)} \int_\Rn \vartheta_t 
\big| \Theta^{\mu}_{t,G}(1, 1)(y) \big|^2 \frac{d\mu dt}{t^{m+1}}\bigg)^{1/2}.
\end{equation}
Thus, Lemma $\ref{phi-aQ}$ implies that
\begin{align*}
\G_{par}^{'}
&\lesssim \Big\| \sum_{\substack{Q \in \mathcal{D} \\ \ell(Q) \leq 2^s}} \Delta_Q f_1 \cdot \langle f_2 \rangle_Q \Big\|_{L^p(\mu)}
\lesssim \bigg\| \bigg(\sum_{\substack{Q \in \mathcal{D} \\ \ell(Q) \leq 2^s}} |\Delta_Q f_1|^2  |\langle f_2 \rangle_Q| \bigg)^{1/2}\bigg\|_{L^p(\mu)} \\
&\lesssim \bigg\| M_{\D}f_2 \bigg(\sum_{\substack{Q \in \mathcal{D} \\ \ell(Q) \leq 2^s}} |\Delta_Q f_1|^2 \bigg)^{1/2}\bigg\|_{L^p(\mu)}
\\%%%%%%%%%%
&\lesssim \big\|M_{\D}f_2\big\|_{L^{p_2}(\mu)} \bigg\| \bigg(\sum_{\substack{Q \in \mathcal{D} \\ \ell(Q) \leq 2^s}} |\Delta_Q f_1|^2 \bigg)^{1/2}\bigg\|_{L^{p_1}(\mu)} 
\lesssim \big\| f_1 \big\|_{L^{p_1}(\mu)} \big\| f_2 \big\|_{L^{p_2}(\mu)}.
\end{align*}

In order to analyze $\G_{par}^{''}$, set $S(f):=\Big(\sum_{\substack{Q \in \mathcal{D} \\ \ell(Q) \leq 2^{s}}} |\Delta_Q f|^2 \Big)^{1/2}$. Then we get
\begin{align*}
&\bigg|\sum_{k=r+1}^{s-\log_2{\ell(R)}} \langle \Delta_{R^{(k)}}f_1 \rangle_{R^{(k-1)}} \langle \Delta_{R^{(k)}} f_2 \rangle_{R^{(k-1)}}\bigg| \\
&\leq \bigg(\sum_{k=r+1}^{s-\log_2{\ell(R)}} \big\langle |\Delta_{R^{(k)}}f_1| \big\rangle_{R^{(k-1)}}^2 \bigg)^{1/2}
     \bigg(\sum_{k=r+1}^{s-\log_2{\ell(R)}} \big\langle |\Delta_{R^{(k)}}f_2| \big\rangle_{R^{(k-1)}}^2 \bigg)^{1/2} \\
&\leq \big\langle S(f_1) S(f_2) \big\rangle_{R^{(r)}}^2.
\end{align*}
Together with Lemma $\ref{phi-aQ}$, this implies that
\begin{align*}
\G_{par}^{''}
&\lesssim \bigg\| \sum_{\substack{Q \in \mathcal{D} \\ \ell(Q) \leq 2^s}} \big\langle S(f_1) S(f_2) \big\rangle_{Q}^2 a_Q^2 \bigg\|_{L^p(\mu)}
\lesssim \big\| S(f_1) S(f_2) \big\|_{L^p(\mu)} \\
&\leq \big\| S(f_1) \big\|_{L^{p_1}(\mu)} \big\| S(f_2) \big\|_{L^{p_2}(\mu)}
\lesssim \big\| f_1 \big\|_{L^{p_1}(\mu)} \big\| f_2 \big\|_{L^{p_2}(\mu)}.
\end{align*}
So far, we have shown Theorem $\ref{Big piece}$.
%%%%%%%%%%%%%%%%%%%%%%%%%%%%%%%%%%%%%%%%%%%%%%%%%%%%%%%%%%%%%%%%%%%%%%%%%%%%%%%%%%

\begin{lemma}\label{phi-aQ}
Let $1<q<\infty$ and $\{ a_Q \}_{Q \in \D}$ be the same as $(\ref{aQ})$. Then there holds that
$$
\mathscr{Z}:=
\bigg\| \bigg(\sum_{Q \in \D: \ell(Q) \leq 2^s} |\langle \phi \rangle_Q|^2 a_Q^2 \bigg)^{1/2} \bigg\|_{L^q(\mu)}
\lesssim ||\phi||_{L^q(\mu)}.
$$
\end{lemma}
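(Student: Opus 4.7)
The plan is to interpret the inequality as a dyadic Carleson embedding on the sequence $\{a_Q^2\}_{Q\in\D}$. Since each $a_Q$ is supported on $Q$, the standard non-homogeneous Carleson embedding theorem gives the $L^q(\mu)$-bound on $\mathscr{Z}$ for every $1<q<\infty$ provided that
\begin{equation*}
\sum_{\substack{Q\in\D\\ Q\subset Q_0}} \int_{\Rn} a_Q^2\,d\mu \,\leq\, C\,\mu(Q_0),\qquad \forall\,Q_0\in\D.
\end{equation*}

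To verify this packing, I would unfold $a_Q^2$ directly. The cubes $\{R\in\D_{good}\colon R^{(r)}=Q\}$ are pairwise disjoint, contained in $Q$, and their $t$-intervals $[\ell(R)/2,\ell(R)]$ are disjoint across scales. Summing over $Q\subset Q_0$, applying Fubini, and using the standard estimate $\int_{\Rn}\vartheta_t(x,y)\,d\mu(x)\lesssim t^m$ (valid for $\lambda>1$ by geometric summation on dyadic annuli around $y$ together with the power boundedness of $\mu$) yields
\begin{equation*}
\sum_{Q\subset Q_0} \int a_Q^2\,d\mu \,\lesssim\, \int_0^{\ell(Q_0)}\int_{\Rn} \big|\Theta^{\mu}_{t,G}(1,1)(y)\big|^2\,\frac{d\mu(y)\,dt}{t}.
\end{equation*}
The right-hand side is essentially a squared Lusin-area integral of $(1,1)$, with $y$-integration restricted to $\Rn\setminus G_Q$ by the vanishing factor $\mathbf{1}_{\Rn\setminus G}$ built into $s_{t,G}$. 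I then split $Q_0$ into the pieces $G_Q$, $H_Q$, and $S_Q\setminus H_Q$: on $G_Q$ one uses the pointwise bound $g^*_{\lambda,\mu,Q_0}(1,1)\le \zeta_0$ coming from the definition (\ref{SQ}) of $G_Q$; on $H_Q$ one uses only $\mu(H_Q)\le \delta_0\mu(Q)$ together with the size-condition on $s_t$ (which by the $\mathbf{1}_{\Rn\setminus G}$ factor yields a pointwise bound on the Lusin-area integrand); and on $S_Q\setminus H_Q$ a layer-cake computation applied to the weak-type hypothesis (\ref{Weak}) gives the desired control.

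The main technical obstacle is translating the weak-$L^{p_0}$ bound (\ref{Weak}) into the $L^2$-integrability needed for the packing, especially when $p_0<2$ (where a naïve layer-cake integration diverges at large levels) and, more delicately, getting a constant that scales like $\mu(Q_0)$ rather than the larger $\mu(Q)$ provided directly by the hypothesis. I would overcome this by truncating the level-set integration at a critical height dictated by $\mu(H_Q)\le \delta_0\mu(Q)$: below the threshold the $\mathbf{1}_{\Rn\setminus G}$-vanishing of $s_{t,G}$ combined with the size-condition yields a harmless $L^\infty$-bound, and above it the weak-type hypothesis absorbs the remaining level sets into $\mu(Q_0)$. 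Once the packing condition is in hand, invoking the Carleson embedding completes the proof of $\mathscr{Z}\lesssim\|\phi\|_{L^q(\mu)}$.
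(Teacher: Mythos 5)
Your overall architecture (reduce to a Carleson packing condition for $\{a_Q^2\}$, then invoke an embedding theorem) is the right shape, but the way you propose to verify the packing condition has a genuine gap, and it is precisely the point where the paper's argument is both simpler and essential. After your Fubini step you integrate out the $x$-variable via $\int \vartheta_t(x,y)\,d\mu(x)\lesssim t^m$ and are left with $\int_0^{\ell(Q_0)}\int |\Theta^{\mu}_{t,G}(1,1)(y)|^2\,\frac{d\mu(y)\,dt}{t}$. But the only hypothesis available, $(\ref{Weak})$, is a statement about the distribution \emph{in $x$} of $g^*_{\lambda,\mu,Q}(\mathbf{1}_Q,\mathbf{1}_Q)(x)$; to bring it to bear on the $x$-free quantity you would need the reverse inequality $\int \vartheta_t(x,y)\,d\mu(x)\gtrsim t^m$, which fails for non-doubling $\mu$ (the measure may essentially vanish near $y$). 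Your proposed fix via truncated layer-cake and the $p_0<2$ discussion does not address this, and in any case the weak-type bound is only given for the one fixed cube $Q$, while the packing condition must hold with constant $\mu(Q_0)$ for \emph{every} dyadic $Q_0$, including arbitrarily small ones.

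The missing idea is that the hard analysis was already done when $G_Q$ was constructed: by $(\ref{SQ})$ the exceptional set $S_Q=\{g^*_{\lambda,\mu,Q}(\mathbf{1}_Q,\mathbf{1}_Q)>\zeta_0\}$ was removed, so wherever the modified kernel $s_{t,G}$ is nonzero one has the \emph{pointwise} bound $g^*_{\lambda,\mu,G}(1,1)(x)\le \zeta_0\lesssim 1$. Since the cubes $R$ with $R^{(r)}=Q$ are disjoint and their $t$-windows $[\ell(R)/2,\ell(R)]$ are disjoint across scales, one gets directly
$\sum_{Q\subset F} a_Q(x)^2 \le \mathbf{1}_F(x)\, g^*_{\lambda,\mu,G}(1,1)(x)^2 \lesssim \mathbf{1}_F(x)$,
a pointwise Carleson condition that never requires integrating out $x$ and trivially localizes to every $F$. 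This is what the paper uses. Finally, note that passing from the (even integrated) packing condition to the $L^q$ bound on the \emph{square function} $\mathscr{Z}$ for all $1<q<\infty$ is not the scalar Carleson embedding theorem applied off the shelf; it is exactly the content the paper proves, via principal cubes (stopping cubes for $\phi$), the estimate $\langle|\phi|\rangle_Q\le 2\langle|\phi|\rangle_{Q^a}$, duality, and the sparseness $\sum_{F'\subset F}\mu(F')\lesssim\mu(F)$. If you intend to cite such a square-function embedding as a black box, you should say which result you mean; as written, both halves of your plan lean on statements that are either unproved or unavailable.
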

\begin{proof}
We here follow the scheme of the proof in \cite{LPR}. Let us first introduce the principal cubes.
Let $\mathscr{F}_0$ be the set of maximal cubes $Q \in \D$ with $\ell(Q) \leq 2^s$. And inductively,
$$
\mathscr{F}_{k+1}:=\bigcup_{Q \in \mathscr{F}_k} \big\{Q' \subset Q;\ \langle |\phi| \rangle_{Q'} > 2 \langle |\phi| \rangle_{Q},\ Q' \in \D \text{ is maximal}
\big\}.
$$
Set $\mathscr{F} := \bigcup_{k=0}^{\infty} \mathscr{F}_k$. For any cube $Q \in \D$ with $\ell(Q) \leq 2^s$, we denote by $Q^a$ the minimal cube in $\mathscr{F}$ that contains $Q$.

It follows from the definition that $\langle |\phi| \rangle_{Q} \leq 2 \langle |\phi| \rangle_{Q^a}$. Moreover, by $(\ref{SQ})$, we have
$$
\sum_{Q \in \D: Q \subset F} a_Q(x)^2
\leq \mathbf{1}_{F}(x) g_{\lambda,\mu,G}^*(1,1)(x)^2
\lesssim \mathbf{1}_{F}(x),
$$
which implies that
\begin{align*}
\mathscr{Z} & = \bigg\| \bigg(\sum_{Q^a \in \mathscr{F}} \sum_{\substack{Q \in \D \\ Q \subset Q^a}}
 |\langle \phi \rangle_Q|^2 a_Q^2 \bigg)^{1/2} \bigg\|_{L^q(\mu)}
\\%%%%%%%%%
&\lesssim \bigg\| \bigg(\sum_{F \in \mathscr{F}} \langle |\phi| \rangle_F^2 \mathbf{1}_F \bigg)^{1/2} \bigg\|_{L^q(\mu)}
\leq \bigg\| \sum_{F \in \mathscr{F}} \langle |\phi| \rangle_F \mathbf{1}_F \bigg\|_{L^q(\mu)}.
\end{align*}
By duality, there exists $g \in L^{q'}(\mu)$ with $||g||_{L^{q'}(\mu)}=1$, such that
\begin{align*}
\mathscr{Z}
&\lesssim \int_{\Rn} \sum_{F \in \mathscr{F}} \langle |\phi| \rangle_F \mathbf{1}_F(x) g(x) d\mu(x)
=\sum_{F \in \mathscr{F}} \langle |\phi| \rangle_F  \langle g \rangle_F \mu(F) \\
&\leq \Big(\sum_{F \in \mathscr{F}} \langle |\phi| \rangle_F^p  \mu(F) \Big)^{1/p}
      \Big(\sum_{F \in \mathscr{F}} \langle |g| \rangle_F^{p'}  \mu(F) \Big)^{1/{p'}} \\
&\lesssim ||\phi||_{L^q(\mu)} ||g||_{L^{q'}(\mu)}
=||\phi||_{L^q(\mu)},
\end{align*}
which is provided by Carleson embedding theorem. Hence, it only remains to show
$$
\sum_{F' \in \mathscr{F}: F' \subset F} \mu(F') \lesssim \mu(F),\ \text{for any } F \in \mathscr{F}.
$$
Write $E(F):= F \setminus \bigcup_{F' \in \ch_{\mathscr{F}}(F)} F'$. Then we have
$$
\mu(E(F)) \geq \frac12 \mu(F) \quad\text{and}\quad \{E(F)\}_{F \in \mathscr{F}} \text{ is a disjoint family}.
$$
Consequently, we deduce that
$$
\sum_{F' \in \mathscr{F}: F' \subset F} \mu(F')
\leq 2 \sum_{F' \in \mathscr{F}: F' \subset F} \mu(E(F'))
\leq 2 \mu(F).
$$
This completes the proof.
\end{proof}

\subsection*{Acknowledgements} The authors want to express their sincere thanks to the referee for his or her valuable remarks and
suggestions, which made this paper more readable. 
%%%%%%%%%%%%%%%%%%%%%%%%%%%%%%%%%%%%%%%%%%%%%%%%%%%%%%%%%%%%%%%%%%%%%%%%%%%%%

%%%%%%%%%%%%%%%%%%%%%%%%%%%%%%%%%%%%%%%%%%%%%%%%%%%%%%%%%%%%%%%%%%%%%%%%%%%%%


\begin{thebibliography}{00}

\bibitem{BH}T. A. Bui, M. Hormozi,
\emph{Weighted bounds for multilinear square functions},
Potential Anal. 46 (2017), 135--148.


\bibitem{CX-2}M. Cao, Q. Xue,
\emph{A non-homogeneous local $Tb$ theorem for Littlewood-Paley $g_{\lambda}^{*}$-function with $L^p$-testing condition}, Forum Math. 30 (2018), 457--478.

\bibitem{CX-3}M. Cao, Q. Xue, 
\emph{$L^p$ boundedness of non-homogeneous Littlewood-Paley $g^*_{\lambda, \mu}$-function with non-doubling measures}, https://arxiv.org/abs/1605.04649. 


\bibitem{CXY}X. Chen, Q. Xue, K. Yabuta,
\emph{On multilinear Littlewood-Paley operators},
Nonlinear Anal. 115 (2015), 25-40.

\bibitem{CDM}R. R. Coifman, D. Deng, Y. Meyer,
\emph{Domains de la racine carr\'{e}e de certains op\'{e}rateurs diff\'{e}rentiels accr\'{e}tifs},
Ann. Inst. Fourier (Grenoble) 33 (1983), 123--134.

\bibitem{CMM}R. R. Coifman, A. McIntosh, Y. Meyer,
\emph{L'integrale de Cauchy definit un operateur borne sur $L^2$ pour les courbes lips-chitziennes},
Ann. of Math. 116 (1982), 361--387.

\bibitem{CM}R. R. Coifman, Y. Meyer,
\emph{Au-del\`{a} des op\'{e}rateurs pseudo-diff\'{e}rentiels},
Asterisque 57 (1978).

\bibitem{DJK}B. Dahlberg, D. Jerison, C. Kenig,
\emph{Area integral estimates for elliptic differential operators with non-smooth coefficients},
Arkiv Mat. 22 (1984), 97--108.

\bibitem{DJ}G. David, J. L. Journe,
\emph{Une caract\'{e}risation des op\'{e}rateurs int\'{e}graux singuliers born\'{e}s sur $L^2(\Rn)$},
C. R. Math. Acad. Sci. Paris 296 (1983) 761--764.

\bibitem{FJK-1}E. B. Fabes, D. Jerison, C. Kenig,
\emph{Multilinear Littlewood-Paley estimates with applications to partial differential equations},
Proc. Natl. Acad. Sci. 79 (1982), 5746--5750.

\bibitem{FJK-2}E. B. Fabes, D. Jerison, C. Kenig,
\emph{Necessary and sufficient conditions for absolute continuity of elliptic harmonic measure},
Ann. of Math. 119 (1984), 121--141.

\bibitem{FJK-3}E. B. Fabes, D. Jerison, C. Kenig,
\emph{Multilinear square functions and partial differential equations},
Amer. J. Math. 107 (1985), 1325--1368.

\bibitem{F}C. Fefferman,
\emph{Inequalities for strongly singular convolution operators},
Acta Math. 124 (1970), 9--36.

\bibitem{FS}C. Feffrman, E. M. Stein,
\emph{$H^p$ spaces of several variables},
Acta Math. 129 (1972), 137--193.

\bibitem{GW}R. F. Gundy, R. L. Wheeden,
\emph{Weighted integral inequalities for the nontangential maximal function, Lusin area integral, and Walsh-Paley series},
Studia Math. 49 (1973), 101--118.

\bibitem{HXMY}S. He, Q. Xue, T. Mei, K. Yabuta,
\emph{Existence and boundedness of multilinear Littlewood-Paley operators on Campanato spaces},
J. Math. Anal. Appl. 432 (2015), 86--102.


\bibitem{Ht}T. Hyt\"{o}nen, 
\emph{The sharp weighted bound for general Calderon-Zygmund operators},  
Ann. Math., (2) 175 (2012), 1473--1506.


\bibitem{Ht2}T. Hyt\"{o}nen, 
\emph{A framework for non-homogeneous analysis on metric spaces, and the RBMO space of Tolsa}, Publ. Mat. 54 (2010), 485--504.

\bibitem{K}D. S. Kurtz,
\emph{Littlewood-Paley operators on $BMO$},
Proc. Amer. Math. Soc. 99 (1987), 657--666.

\bibitem{K-1}D. S. Kurtz,
\emph{Rearrangement inequalities for Littlewood-Paley operators},
Math. Nachrichten 133 (1987), 71--90.


\bibitem{LPR}M. T. Lacey, S. Petermichl, M. C. Reguera,
\emph{Sharp $A_2$ inequality for Haar shift operators},
Math. Ann. 348 (2010), 127--141.

\bibitem{L}A. K. Lerner,
\emph{On pointwise estimates for the Littlewood-Paley operators},
Proc. Amer. Math. Soc.131 (2002), 1459--1469.

\bibitem{L-1}A. K. Lerner,
\emph{On some sharp weighted norm inequalities},
J. Funct. Anal. 232 (2006), 477--494.

\bibitem{L-2}A. K. Lerner,
\emph{On some weighted norm inequalities for Littlewood-Paley operators},
Illinois J. Math. 52 (2008), 653--666.

\bibitem{L-3}A. K. Lerner,
\emph{On sharp aperture-weighted estimates for square functions},
J. Fourier Anal. Appl. 20 (2014), 784--800.

\bibitem{LP-1}J. E. Littlewood and R. E. A. C. Paley,
\emph{Theorems on Fourier series and power series},
J. London Math. Soc. 6 (1931), 230--233.

\bibitem{LP-2}J. Littlewood, R. Paley,
\emph{Theorems on Fourier series and power series}, II,
Proc. Lond. Math. Soc. 42 (1936), 52--89.

\bibitem{MZ}J. Marcinkiewicz, A. Zygmund,
\emph{On a theorem of Lusin},
Duke Math. J. 4 (1938), 473--485.

\bibitem{MV}H. Martikainen, E. Vuorinen,
\emph{Dyadic-probabilistic methods in bilinear analysis},
https://arxiv.org/abs/1609.

\bibitem{MR}B. Muckenhoupt, R. L. Wheeden,
\emph{Norm inequalities for the Littlewood-Paley function $g_\lambda^*$},
Trans. Amer. Math. Soc. 191 (1974), 95--111.

\bibitem{NTV-02}F. Nazarov, S. Treil, A. Volberg,
\emph{Accretive system $Tb$-theorems on nonhomogeneous spaces},
Duke Math. J. 113 (2002), 259--312.

\bibitem{NTV-03}F. Nazarov, S. Treil, A. Volberg,
\emph{The $Tb$-theorem on non-homogeneous spaces},
Acta Math. 190 (2003), 151--239.

\bibitem{RS}L. de Rosa, C. Segovia,
\emph{One-sided Littlewood-Paley theory},
J. Fourier Anal. Appl. 3 (1997), 933--957.

\bibitem{SY}M. Sakamoto, K. Yabuta,
\emph{Boundedness of Marcinkiewicz functions},
Studia. Math. 135 (1999), 103--142.

\bibitem{SW}C. Segovia, R. L. Wheeden,
\emph{On the function gt and the heat equation},
Studia Math. 37 (1970), 57--93.

\bibitem{SXY}S. Shi, Q. Xue and K. Yabuta,
\emph{On the boundedness of multilinear Littlewood-Paley $g_{\lambda}^*$ function},
J. Math. Pures Appl. 101 (2014), 394--413.

\bibitem{S-58}E. M. Stein,
\emph{On the functions of Littlewood-Paley, Lusin, and Marcinkiewicz},
Trans. Amer. Math. Soc. 88 (1958), 430--466.

\bibitem{S-61}E. M. Stein,
\emph{On some function of Littlewood-Paley and Zygmund},
Bull. Amer. Math. Soc. 67 (1961), 99--101.

\bibitem{T-3}X. Tolsa,
\emph{Analytic capacity, the Cauchy transform, and non-homogeneous Calder\'{o}n-Zygmund theory},
Progress in Mathematics, Vol. 307, Birkh\"{a}user Verlag, Basel, 2014.

\bibitem{XY}Q. Xue, J. Yan,
\emph{On multilinear square function and its applications to multilinear
Littlewood-Paley operators with non-convolution type kernels},
J. Math. Anal. Appl. 422 (2015), 1342--1362.

\bibitem{Z}A. Zygmund,
\emph{On certain integrals},
Trans. Amer. Math. Soc. 55 (1944), 170--204.

\end{thebibliography}
\end{document}